\providecommand{\keywords}[1]
{
	\small	
	\textbf{\textit{Keywords---}} #1
}
\newtheorem{theorem}{Theorem}[section]
\newtheorem{proposition}[theorem]{Proposition}
\newtheorem{lemma}[theorem]{Lemma}
\newtheorem{claim}[theorem]{Claim}
\newtheorem{remark}{Remark}[section] 
\newtheorem{example}{Example}[section]
\newtheorem{problem}{Problem}
\newcommand{\oo}{\infty}
\newcommand{\ra}{\rightarrow}
\newcommand{\comp}{\circ}
\newcommand{\N}{\mathbb{N}}
\newcommand{\Z}{\mathbb{Z}}
\newcommand{\C}{\mathbb{C}}
\newcommand{\R}{\mathbb{R}}
\newcommand{\e}{\epsilon}
\newcommand{\h}{\eta}
\newcommand{\bm}{\mathbf}
\newcommand{\sub}{\subseteq}
\newcommand{\lav}{\left|}
\newcommand{\rav}{\right|}
\newcommand{\lf}{\left\lfloor}
\newcommand{\rf}{\right\rfloor}
\newcommand{\ldav}{\left| \left|}
\newcommand{\rdav}{\right| \right|}
\newcommand{\lc}{\left(} 
\newcommand{\rc}{\right)}
\newcommand{\lb}{\left[}
\newcommand{\rb}{\right]}
\newcommand{\lfp}{\left\{}
\newcommand{\rfp}{\right\}}
\newcommand{\mc}{\mathcal}
\newcommand{\SP}{\mathbb{S}}
\title{Operators of stochastic adding machines and Julia sets}
\author{D.A. Caprio\footnote{UNESP -  Departamento de matem\'atica,
		Faculdade de Engenharia - C\^ampus de Ilha Solteira, Avenida Brasil, 56 - Centro - Ilha Solteira, SP - CEP 15385-000, SP, Brasil.
	}, 
	A. Messaoudi\footnote{UNESP - Departamento de matem\'atica, Instituto de Bioci\^encias Letras e Ci\^encias Exatas, Rua Crist\'ov\~ao Colombo, 2265, Jardim Nazareth,
		15054-000 - S\~ao Jos\'e do Rio Preto, SP, Brasil.
	}, I. Tsokanos\footnote{UNESP - Departamento de matem\'atica, Instituto de Bioci\^encias Letras e Ci\^encias Exatas.  
		e-mail: {\rm \texttt{ioannis.tsokanos@gmail.com } } }
	, G. Valle\footnote{UFRJ - Departamento de m\'etodos estat\'{\i}sticos do Instituto de Matem\'atica.  Caixa Postal 68530, 21945-970, Rio de Janeiro, Brasil. 
		e-mail: {\rm \texttt{glauco.valle@im.ufrj.br}} } }
\date{ }
\begin{document}
	
	\maketitle
	
	\begin{abstract}
		
		A stochastic adding machine is a Markov chain on the set of non-negative integers $\mathbb{Z}_{+}$ that models the process of adding one by successively updating the digits of a number's expansion in a given numeration system. At each step, random failures may occur, interrupting the procedure and preventing it from continuing beyond a certain point.
		
		The first model of such a stochastic adding machine, constructed for the binary base, was introduced by Killeen and Taylor. Their work was motivated by applications to biological clocks, aiming to model phenomena related to time discrimination and/of psychological judgment. 
		From a mathematical perspective, they characterized the spectrum of the associated transition operator in terms of a filled Julia set. 
		
		In this paper, we consider a stochastic adding machine based on a bounded Cantor numeration system and extend its definition to a continuous state space--namely, the closure of $\Z_{+}$ with respect to the topology induced by the Cantor numeration system. This stochastic process naturally induces a transition operator $S$ acting on the Banach space of continuous complex-valued functions over the continuous state space, as well as a fibered filled Julia set $\mc{E}$. 
		
		Our main result describes the spectrum of $S$ in terms of the fibered filled Julia set $\mc{E}$. Specifically, if the stochastic adding machines halts with probability one after a finite number of steps, then the spectrum of $S$ coincides with $\mc{E}$; otherwise, the spectrum coincides with the boundary $\partial \mc{E}$. 
		
	\end{abstract}

	\keywords{Julia sets, Stochastic adding machines, Markov chains, Spectrum of transition operator, Cantor systems of numeration}

	\section{Introduction}
	\label{sec:intro}

	In this work, we explore a theme that intersects several mathematical areas, including probability theory, complex dynamical systems, and spectral theory. Specifically, we investigate stochastic adding machines and their connections to Julia sets and Markov chains.
	
	\paragraph{}
	
	Julia sets are compact subsets of the complex plane that play a fundamental role in the study of Complex Dynamics. 
	Given a non-constant complex polynomial $f: \mathbb{C} \to \C$, the \textit{filled Julia set $\mc{E}_{f}$} of $f$ is the set of complex numbers $z \in \C$  whose forward iteration, that is, the sequence $\lc f^{r}\lc z \rc \rc_{r \ge 1}$, where  $f^{r}\lc z \rc= \underbrace{f \circ \ldots \circ f}_{r-\text{times}}(z)$, is bounded. The \textit{Julia set} of $f$ is the boundary $\partial \mc{E}_{f}$ of its filled Julia set. 
	
	More generally, given a sequence $\bar{f} = \lc f_{r} \rc_{r \ge 1}$ of non-constant complex polynomials $f_{r} : \C \to \C$ for $r \ge 1$, the \textit{fibered filled Julia set $\mc{E}_{\bar{f}}$} of the sequence $\bar{f}$ is the set of complex numbers $z \in \C$ such that the forward orbit $\lfp \tilde{f}_{r}\lc z \rc \rfp_{ r \ge 1 }$ is bounded, where $\tilde{f}_{r} := f_{r} \circ f_{r-1} \circ \dots \circ f_{1}$ for $r \ge 1$ (see, for intance, \cite{{msv}}). 
	
	Julia sets and filled Julia sets were introduced independently by Julia \cite{Julia} and Fatou \cite{fatou19}. For a more detailed exploration of their properties, we refer the reader to Carleson's book \cite{cmv} and the works \cite{BC, Do, DH} and the references therein.

	\bigskip
	
	In 2000, Killeen and Taylor \cite{Killeen_Taylor-A_Stochastic Adding_Machine_And_Complex_Dynamics, Killeen_Taylor-How_The_Propagation_Of_Error_Through_Stochastic_Counters_Affects_Time_Discrimination_And_Other_Psychophysical_Judgments} established a connection between Julia sets and adding machines. They considered the addition algorithm for non-negative integers $\Z_{+}$ expressed in binary and introduced what is now known as the \textit{binary stochastic adding machine}. This machine is a Markov chain with states corresponding to the non-negative integers, which models the process of adding $1$ by changing one digit at a time in the binary expansion of a number and allows random failures with probability $1-p$ that prevent the algorithm from continuing at each step.

	They showed that the spectrum of the transition operator of the binary stochastic adding machine, acting on the space $ l^{\oo}\lc \Z_{+} \rc $ of bounded complex sequences endowed with the supremum norm, is equal to the filled Julia set of the quadratic map $f: \mathbb{C} \to \mathbb{C} $, defined by $f\lc z \rc = \lc z- \lc 1-p \rc \rc^2 / p^{2}$.

	In recent years, stochastic adding machines based on alternative numeration systems have been introduced. For example, a stochastic adding machine based on the Fibonacci system is studied in \cite{ms}, while stochastic adding machines based on Cantor systems of numeration are examined in \cite{msv, mv}. 
	A stochastic adding machine based on Bratteli diagrams is discussed in \cite{cmv}.

	\paragraph{}
	In this work, we study adding machines defined over a Cantor system of numeration. 
	A \textit{Cantor system of numeration $ \textrm{CSN}_{\bar{d}}$} (see \cite{mv})---a concept that generalizes the classical $d$-adic numeration systems, where $d \ge 2$ is an integer---is associated with a sequence of integers 
	\begin{equation}\label{Eq1_Sequence of Integers}
		\bar{d} = \lc d_{r} \rc_{r\ge 1}, \quad \text{where } \;    d_{r} \ge 2 \;  \text{ for all } \;  r \ge 1,
	\end{equation}
	and defines a bijection between the set of non-negative integers $\Z_{+}$ and the set   
	$$ 
	\Gamma_{\bar{d}} :=  \; \lfp \lc a_{r} \rc_{r \ge 1} : \; a_{r} \in \lfp 0, \dots ,d_{r}-1 \rfp \; \text{ for all }\; r \ge 1  \; \text{ and } \; \sum_{r=1}^{+\oo} a_{r} < +\oo \rfp . 
	$$ 
	The bijection assigns to each $n \in \Z_{+}$ the unique sequence $\lc a_{r}\lc n \rc \rc_{r \ge 1} \in \Gamma_{\bar{d}}$ such that 
	\begin{equation}\label{Eq1_q-Expansion}
		n \; = \; \sum_{r=1}^{+\oo} a_{r}\lc n \rc q_{r-1} \quad \text{with } a_{r}\lc n \rc \in \lfp 0, \dots, d_{r} -1 \rfp \text{ for }  r \ge 1, 
	\end{equation} 
	where $q_{0} =1$ and $ q_{r} :=  \prod_{i=1}^{r}  d_{i}$, for $ r \ge 1$. The right-hand side of the above equation is called the \textit{$q$-expansion} of $n$ in $\mathrm{CSN}_{\bar{d}}$, and $a_{r}\lc n \rc$ is referred to as the $r$-th \textit{digit of the expansion}. 
	
	\medskip 
	
	The choice of a Cantor system of numeration $\textrm{CSN}_{\bar{d}}$ naturally induces a topology on $\Z_{+}$, with a basis consisting of the cylinder sets 
	$$ \lb a_{1}, \dots, a_{u} \rb := \lfp n \in \Z_{+} : \; a_{r}\lc n \rc = a_{r} \; \text{ for } \; 1 \le r \le u \rfp , $$ 
	for $u \ge 1$ and $a_{r} \in \lfp 0, \dots, d_{r} -1 \rfp$ for $ 1 \le r \le u$.  
	Throughout this paper, we identify $\Z_{+}$ with the topological space $ \Gamma_{\bar{d}}$, that is, we endow $\Z_{+}$ with the topology generated by $\textrm{CSN}_{\bar{d}}$.

	\paragraph{}

	The \textit{adding machine $\textrm{AM}_{\bar{d}}$}, where $\bar{d}$ is a sequence of positive integers as defined in \eqref{Eq1_Sequence of Integers}, is the algorithm that performs the addition of one to a non-negative integer $n \in \Z_{+}$ expressed in its $q$-expansion in $\mathrm{CSN}_{\bar{d}}$. It operates on $\Z_{+}$ as follows:  define the \textit{counter} of $n \in \Z_{+}$ as 
	\begin{equation}\label{Eq1_Counter}
		s_{n} \;= \; s_{n, \bar{d}} := \; \min\lfp r\ge 1:a_{r}\lc n \rc \neq d_{r}-1 \rfp . 
	\end{equation}
	Then, the digits of $n+1$ are given by 
	$$
	a_{r}\lc n+1 \rc \; = \;  \left\{
	\begin{array}{cl}
		0 &, \ r< s_{n} \, , \\
		a_{r}(n) + 1 &, \ r = s_{n} \, , \\
		a_{r}(n) &, \ r >\ s_{n} \, .
	\end{array}
	\right.
	$$
	
	\paragraph{}
	
	A \textit{stochastic adding machine with fallible counter $\textrm{AMFC}_{\bar{d}, \bar{p}}$},  where  
	\begin{equation}\label{Eq1_Sequence of Probabilities}
		\bar{p} \; = \; \lc p_{r} \rc_{r \ge 0}, \quad \text{with }\;   p_{r} \in \lc 0, 1 \rb \; \text{ for all } \; r \ge 1, 
	\end{equation} 
	is a stochastic process modeling the operation of an adding machine $\textrm{AM}_{\bar{d}}$ in which, at each step $r$, the information about the counter may be lost independently with probability $1 - p_{r}$, causing the algorithm to halt. Thus, the output of the machine becomes a random variable. 
	
	It is worth noting that the binary stochastic adding machine of Killeen and Taylor corresponds to the special case where $d_{r} =2$ and $p_{r} = p $ for all $r \geq 1$, where $p \in \lc 0, 1 \rc$ is a fixed probability.

	\paragraph{}
	
	More precisely, for $n, m \in \Z_{+}$, $n$, the \textit{transition probability} $\textrm{p}\lc n, m \rc = \textrm{p}_{\bar{d}, \bar{p}}\lc n, m \rc$ is defined directly from the operation of $\textrm{AMFC}_{\bar{d}, \bar{p}}$, as follows: 
	\begin{equation}\label{Eq1_Transition Probabilities Integers}
		\textrm{p}\lc n, m \rc := \;
		\left\{
		\begin{array}{cl}
			\lc 1-p_{s +1} \rc \prod_{r=1}^{s} p_{r} &, \quad \text{ if }  m = \sum_{r = s + 1}^{+\oo} a_{r}\lc n \rc q_{r-1}   \text{ with } 1\le s \le s_{n} -1 , \\ 
			1-p_1 &, \quad \text{ if } \ m=n \, , \\
			\prod_{r=1}^{s_n} p_{r} &, \quad \text{ if } \ m=n+1 \, , \\
			0 &, \quad \textrm{ otherwise} .  
		\end{array} 
		\right. 
	\end{equation}

	\paragraph{}
	
	These transition probabilities define the \textit{countable transition matrix} $S_{\bar{d},\bar{p}} = \lb s(n,m) \rb_{n,m \ge 0}$ of the stochastic adding machine $\textrm{AMFC}_{\bar{d},\bar{p}}$. 
	As an example, below is the beginning of the matrix $S_{\bar{d}, \bar{p}}$ in the case of base $3$, where $d_{r}=3$ for all $r \geq 1$ (and we can see its transition graph in Figure \ref{grafodj3}): 
	$$
	{
		\left[
		\begin{array}{ccccccccccc}
			\!\!1-p_1        \!\!&\!\!p_1  \!\!&\!\!0    \!\!&\!\!0         \!\!&\!\!0    \!\!&\!\!0    \!\!&\!\!0         \!\!&\!\!0    \!\!&\!\!0    \!\!&\!\!0  \!\!&\!\! \cdots \!\! \\
			\!\!0            \!\!&\!\!1-p_1\!\!&\!\!p_1  \!\!&\!\!0         \!\!&\!\!0    \!\!&\!\!0    \!\!&\!\!0         \!\!&\!\!0    \!\!&\!\!0    \!\!&\!\!0  \!\!&\!\! \cdots \!\! \\
			\!\!p_1(1-p_2)   \!\!&\!\!0    \!\!&\!\!1-p_1\!\!&\!\!p_1p_2    \!\!&\!\!0    \!\!&\!\!0    \!\!&\!\!0         \!\!&\!\!0    \!\!&\!\!0    \!\!&\!\!0  \!\!&\!\!\cdots \!\! \\
			\!\!0            \!\!&\!\!0    \!\!&\!\!0    \!\!&\!\!1-p_1     \!\!&\!\!p_1  \!\!&\!\!0    \!\!&\!\!0         \!\!&\!\!0    \!\!&\!\!0    \!\!&\!\!0  \!\!&\!\! \cdots \!\! \\
			\!\!0            \!\!&\!\!0    \!\!&\!\!0    \!\!&\!\!0         \!\!&\!\!1-p_1\!\!&\!\!p_1  \!\!&\!\!0         \!\!&\!\!0    \!\!&\!\!0    \!\!&\!\!0  \!\!&\!\! \cdots \!\! \\
			\!\!0            \!\!&\!\!0    \!\!&\!\!0    \!\!&\!\!p_1(1-p_2)\!\!&\!\!0    \!\!&\!\!1-p_1\!\!&\!\!p_1p_2    \!\!&\!\!0    \!\!&\!\!0    \!\!&\!\!0  \!\!&\!\! \cdots \!\! \\
			\!\!0            \!\!&\!\!0    \!\!&\!\!0    \!\!&\!\!0         \!\!&\!\!0    \!\!&\!\!0    \!\!&\!\!1-p_1     \!\!&\!\!p_1  \!\!&\!\!0    \!\!&\!\!0  \!\!&\!\! \cdots \!\! \\
			\!\!0            \!\!&\!\!0    \!\!&\!\!0    \!\!&\!\!0         \!\!&\!\!0    \!\!&\!\!0    \!\!&\!\!0         \!\!&\!\!1-p_1\!\!&\!\!p_1  \!\!&\!\!0  \!\!&\!\! \cdots \!\! \\
			\!\!p_1p_2(1-p_3)\!\!&\!\!0    \!\!&\!\!0    \!\!&\!\!0         \!\!&\!\!0    \!\!&\!\!0    \!\!&\!\!p_1(1-p_2)\!\!&\!\!0    \!\!&\!\!1-p_1\!\!&\!\!p_1p_{2}p_{3}\!\!&\!\! \cdots \!\! \\
			\!\!\vdots \!\!&\!\!\vdots \!\!&\!\!\vdots \!\!&\!\!\vdots \!\!&\!\!\vdots \!\!&\!\!\vdots \!\!&\!\!\vdots \!\!&\!\!\vdots \!\!&\!\!\vdots \!\!&\!\!\vdots \!\!&\!\! \ddots \!\!
		\end{array}
		\right]}
	$$
	
		\begin{figure}[!h]
		\centering
		\includegraphics[scale=0.08]{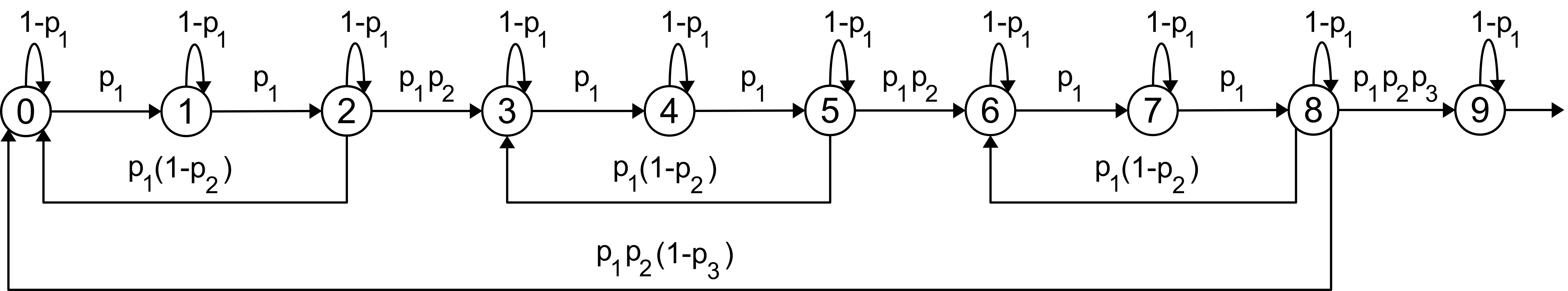}
		\caption{Initial parts of the transition graph of $\textrm{AMFC}_{\bar{d},\bar{p}}$, where $d_{r}=3$ for all $r \geq 1$.}
		\label{grafodj3}
	\end{figure}
	
	Note that the matrix $S_{\bar{d}, \bar{p}}$ is stochastic: the entries in each row sum to $1$. Moreover, the column sums are also equal to $1$ \textit{except for the first column}, whose sum is $1- \prod_{r=1}^{+\oo} p_{r} $. Thus, $S_{\bar{d}, \bar{p}}$ is \textit{doubly stochastic} if and only if $\prod_{r=1}^{+\oo} p_{r} = 0$. It is also proven in \cite{mv} that the $\textrm{AMFC}_{\bar{d}, \bar{p}}$ is null recurrent in this case; otherwise, it is transient.
	
	\medskip
	
	The \textit{transition operator} of the stochastic adding machine $\textrm{AMFC}_{\bar{d},\bar{p}}$, also denoted by $S_{\bar{d}, \bar{p}}$, is the bounded linear operator on $l^{\oo}\lc \Z_{+} \rc$ induced by the matrix $S_{\bar{d},\bar{p}}$. It maps a complex sequence $\bm{v}= \lc v_{n} \rc_{n \in \Z_{+} }$ to the sequence $ S_{\bar{d},\bar{p}} \bm{v} $ with terms:  
	\begin{multline}\label{Eq1_Transition Operator}
		\lc S_{\bar{d},\bar{p}} \bm{v} \rc_{n} :=  \; \sum_{m \in \Z_{+} } \textrm{p}\lc n, m \rc \cdot v_{m}  \hfill  \\
		\quad \quad \quad \; \underset{\eqref{Eq1_Transition Probabilities Integers}}{=} \; \lc 1 - p_{1} \rc v_{n} + \lc \prod_{r=1}^{s_{n}} p_{r} \rc v_{n+1} + \sum_{s = 1}^{s_{n} -1 } \lc \prod_{r=1}^{s} p_{r} \rc \lc 1 -p_{s +1} \rc v_{\sum_{r = s + 1}^{+\oo} a_{r}\lc n \rc q_{r-1} }    \quad , \quad \; \; \;
	\end{multline}
	for $n \in \Z_{+}$. 
	When $s_{n} =1$, the final sum in the right-hand side of \eqref{Eq3_Transition Operator} is understood to be $0$.

	\paragraph{}

	In \cite{mv}, the authors determined the spectra of the transition operator $S_{\bar{d},\bar{p}}$ acting on a Banach space $X$, where $X$ is taken to be one of the following: the space $c_{0}\lc \Z_{+} \rc$ of complex sequences that converge to zero (endowed with the supremum norm), the space $l^{\oo}\lc \Z_{+} \rc$ of bounded complex sequences (also with the supremum norm), or the space $l^{\alpha}\lc \Z_{+} \rc$ of complex $\alpha$-summable sequences, where $1 \le \alpha \;< +\oo$.  
	
	Specifically, they proved that in each case, the spectrum $\sigma\lc S_{\bar{d}, \bar{p}}, X \rc$ coincides with the fibered filled Julia set 
	\begin{equation}\label{Eq1_Fibered Filled Julia Set}
		\mc{E}_{\bar{d},\bar{p}} := \; \lfp z \in \mathbb{C} : \;  \limsup_{r \to +\oo} \lav \tilde{f}_{r}\lc z \rc \rav  \; < \; +\oo \rfp  , 
	\end{equation}
	where  $\tilde{f}_{r} := f_{r} \comp ... \comp f_{1}$ for all $r \ge 1$, and each map $f_{r}:  \mathbb{C} \ra \mathbb{C}$ is given by 
	\begin{equation}\label{Eq1_Julia Polynomials}
		f_{r}\lc z \rc := \; \lc { z- \lc 1 - p_{r} \rc \over p_{r} } \rc^{d_{r}} .     
	\end{equation}

	\paragraph{}
	
	In this paper, we extend the study of the adding machine $\textrm{AM}_{\bar{d}, \bar{p}}$ and its stochastic counterpart $\textrm{AMFC}_{\bar{d}, \bar{p}}$ to a state space strictly larger than $\Z_{+}$, namely,  
	\begin{equation*}
		\overline{ \Gamma_{\bar{d}} } := \; \lfp \lc a_{r} \rc_{r=1}^{+\oo} :\; a_{r} \in \lfp 0 , \dots, d_{r} -1 \rfp \; \text{ for all } r \ge 1 \rfp. 
	\end{equation*} 
	By Tychonoff's theorem, the space $\overline{ \Gamma_{\bar{d} }}$,  endowed with the product topology---where each coordinate space $\lfp 0, \dots, d_{r} -1 \rfp$ is equipped with the discrete topology---is compact. Moreover, $\overline{\Gamma_{\bar{d} }}$ is metrizable, and the subspace topology on $\Z_{+}$ (identified with $\Gamma_{\bar{d}}$) coincides with the $\textrm{CSN}_{\bar{d}}$-topology. 
	In this context, we interpret $\overline{\Gamma_{\bar{d}}}$ as the compactification of $\Z_+$ under the $\mathrm{CSN}_{\bar{d}}$-topology. 
	
	Throughout this paper, it will be convenient to identify each point $ x =\lc a_{r}\lc x \rc  \rc_{r \ge 1} \in  \overline{\Gamma_{\bar{d}}} $ with its formal $q$-expansion: 
	\begin{equation}\label{Eq1_q-Expansion Tychonoff Space}
		x  \; = \; \sum_{r=1}^{+\oo} a_{r}\lc x \rc q_{r-1} .    
	\end{equation}
	In view of \eqref{Eq1_q-Expansion Tychonoff Space}, the set $ \lc \overline{\Gamma_{\bar{d}}}, + \rc$ becomes an abelian group with its addition law inherited naturally from $\Z_{+}$.  
	For instance, if $d_{r} = 2$ for all $r\ge 1$, then $\overline{\Gamma_{\bar{d} }} = \lfp 0,1 \rfp^\mathbb{N}$ corresponds to the group of $2$-adic integers $\Z_{2}$. 
	For further details, see Section \ref{Sec_Preleminaries}.

	\medskip

	The transition operator associated with $\textrm{AMFC}_{\bar{d}, \bar{p}}$ acting on $\overline{\Gamma}_{\bar{d}}$, denoted $\tilde{S}_{\bar{d}, \bar{p}}$, acts on the Banach space 
	\begin{equation*}
		\mc{C}\lc \overline{ \Gamma_{\bar{d}}} \rc := \; \lfp g: \overline{ \Gamma_{\bar{d} } } \to \C \; \text{ continuous} \rfp 
	\end{equation*}
	endowed with the supremum norm. 
	The operator $ \Tilde{S}_{\bar{d}, \bar{p}}: \mc{C} \lc \overline{\Gamma_{\bar{d}}} \rc \to \mc{C}\lc \overline{\Gamma_{\bar{d}}} \rc$
	is defined analytically by restricting the operator $S_{\bar{d}, \bar{p}}: l^{\oo}\lc \Z_{+} \rc \to l^{\oo}\lc \Z_{+} \rc$ to the set of complex sequences 
	$\bm{v} = \lc u_{n} \rc_{n \in \Z_{+}}$ such that for every Cauchy sequence $\lc n_{k} \rc_{k \ge 1}$ in $\Z_{+}$ (with respect to the $\mathrm{CSN}_{\bar{d}}$-topology), the limit $\underset{k \to +\oo}{\lim} u_{n_{k}} $ exists. 
	
	A closed-form expression for the operator $\tilde{S}_{\bar{d}, \bar{p}}$ is provided in Section~\ref{Sec_Defiition of the Operator}, from which it follows that $\tilde{S}_{\bar{d}, \bar{p}}$ is a well defined Markov transition operator. The $\textrm{AMFC}_{\bar{d}, \bar{p}}$ acting on $\overline{\Gamma}_{\bar{d}}$ is thus the discrete-time Markov process with transition operator $\tilde{S}_{\bar{d}, \bar{p}}$. 
	This process operates on $\overline{\Gamma}_{\bar{d}}$ in the same manner as on $\mathbb{Z}_+$ (identified with $\Gamma_{\bar{d}}$), except at the sequence $(d_r - 1)_{r \ge 1} \in \overline{\Gamma}_{\bar{d}}$, which has the identically zero sequence as its successor, since $1 + \sum_{r=1}^{+\oo} \lc d_{r} - 1 \rc q_{r-1} = 0$. 
	Moreover, the process $\textrm{AMFC}_{\bar{d}, \bar{p}}$ on $\overline{\Gamma}_{\bar{d}}$ inherits probabilistic properties from its counterpart on $\mathbb{Z}_+$. In particular, it is topologically irreducible and exhibits either null recurrence when $\prod_{r=1}^{+\infty} p_r = 0$, or transience when $\prod_{r=1}^{+\infty} p_r > 0$.

	\paragraph{}
	
	Our main result describes the spectrum $\sigma\lc \tilde{S}_{\bar{d}, \bar{p}}, \mc{C}\lc \overline{\Gamma_{\bar{d}}} \rc \rc$. 
	Namely, suppose $\bar{d}$ is a bounded sequence of positive integers and $\bar{p}$ is a sequence of probabilities as in \eqref{Eq1_Sequence of Probabilities}. If $\prod_{r=1}^{+\infty} p_r = 0$ (i.e., the null recurrent case), then the spectrum coincides with the fibered filled Julia set $\mc{E}_{\bar{d}, \bar{p}}$ defined in \eqref{Eq1_Fibered Filled Julia Set}. 
	On the other hand, if $\prod_{r=1}^{+\oo} p_{r} \;> 0$ (i.e., the transient case), the spectrum is equal to the boundary $\partial \mc{E}_{\bar{d}, \bar{p}}$. 
	
	Moreover, in both case, we show--using Montel's theorem on normal families-- that the set of eigenvalues of $\tilde{S}_{\bar{d}, \bar{p}}$ is a countable and dense subset of $\partial \mc{E}_{\bar{d},\bar{p}}$.

	\paragraph{}

	The study of spectra of transition operators provides valuable insight into their dynamical behavior when acting on separable Banach spaces (see, for example, \cite{Bayart_Matheron-Dynamics_of_Linear_Operators} and \cite{Grosse_Peris-Linear_Chaos}). For instance, if a linear operator $S$ is topologically transitive, then each connected component of its spectrum intersects the unit circle. Nevertheless, the present work does not focus on the dynamical properties of the transition operators.
	
	Another notable point is that Cantor numeration systems are closely related to the Vershik maps on certain Bratteli diagrams  \cite{Bezuglyi_Karpel-Bratelli_Diagrams_structure_measures_dynamics}. It would be of interest to extend this work to the broader framework of stochastic adding machines associated with Vershik maps on Bratteli diagrams \cite{cmv}, which play a significant role in the theories of operator algebras and dynamical systems (see \cite{Giordano_Putnam_Ckau-Topological_Orbit_Equivalence_and_C*}, \cite{Herman_Putnam_Skau-Ordered_Bratteli_Diagrams_Dimension_Groups_and_Topological_Dynamics}, \cite{Medynets-Cantor_Aperiodic_Systems_and_Bratelli_Diagramms}, \cite{Vershik-Uniform_Algebraik_Approximation_of_Shift_and_Multiplication_Operators}).

	\paragraph{}

	The paper is organized as follows. In Sections \ref{Sec_Preleminaries} and \ref{Sec_Defiition of the Operator}, we introduce the necessary preliminaries and provide a rigorous definition of the operator $\tilde{S}_{\bar{d}, \bar{p}}$ on $\mc{C}\big( \overline{ \Gamma_{\bar{d}} } \big)$. The main result is stated in Section \ref{Sec_Main Result}, with a detailed proof given in Section \ref{Sec_Proof of the Main Results}. In Section \ref{Sec_Open Problems}, we discuss some problems that emerge from this work. Section \ref{Sec_Auxiliary Lemmas Proofs} is devoted to the proofs of auxiliary lemmas. Finally, in Section \ref{sectionexamples} we show some examples of the set $\mathcal{E}_{\bar{d},\bar{p}}$.

	\section{Preleminaries}\label{Sec_Preleminaries}

	Let $X$ be a Banach space and let $I: X \to X$ denote the identity operator. Recall that for a continuous linear operator $S: X \to X $, the \textit{spectrum $\sigma\lc S, X \rc$} can be partitioned into three disjoint subsets (see, for instance, \cite{Y}):

	\begin{enumerate}
		
		\item The \textit{point spectrum (or set of eigenvalues)}: 
		$$ \sigma_{pt}\lc S, X \rc \; = \; \lfp \lambda \in \mathbb{C} :
		S - \lambda I \quad \text{is not injective} \rfp . $$
		
		\item The \textit{continuous spectrum}: 
		$$ \sigma_{c}\lc S, X \rc \; = \; \lfp \lambda \in \mathbb{C}: \; S - \lambda I \quad \text{is injective},  \quad  \overline{\lc S - \lambda I \rc X } \; =  \; X \; \text{ and } \; \lc S -
		\lambda I \rc X \neq X \rfp  ,$$ 
		where the closure $\overline{\lc S - \lambda I \rc X }$ is taken in the norm topology of $X$. 
		
		\item The \textit{residual spectrum}: 
		$$ \sigma_{r}\lc S, X\rc  \; = \; \lfp \lambda \in \mathbb{C}:
		S - \lambda I \; \text{ is injective and } \; \overline{\lc S - \lambda I \rc X } \; \neq \;  X \rfp .$$
	\end{enumerate}
	In addition, we consider the \textit{approximate spectrum}:  
	$$ \sigma_{ap}\lc S, X \rc \; = \; \lfp \lambda \in \C : \; \inf_{\ldav x \rdav = 1} \ldav \lc S - \lambda I \rc x \rdav \; = \; 0 \rfp .$$
	It is well known that 
	\begin{equation}\label{Eq1_Approximate Spectrum Inclusion}
		\overline{\sigma_{pt}\lc S, X \rc} \; \sub \; \sigma_{ap}\lc S, X \rc \; \sub \; \sigma\lc S, X \rc .
	\end{equation}
	The first inclusion follows directly from the definitions of the point and approximate spectra. For the second inclusion, see, for instance, \cite[Lemma 1.2.13, p.17]{Davies_Linear Operators and Their Spectra}.

	\paragraph{}
	
	Throughout this paper, we equip the space $\overline{\Gamma_{\bar{d}}}$ with a natural partial order defined as follows. Let $x= \lc a_{r}\lc x \rc \rc_{r \ge 1}$ and $y= \lc a_{r}\lc y \rc \rc_{r \ge 1} $ be two elements of $\overline{\Gamma_{\bar{d}} }$. Then, we say: 
	\begin{multline*}
		x < y \; \text{ if there exists } u_{0} \in \mathbb{N} \text{ such that } \\  a_{u_{0}}\lc x \rc < a_{u_{0}}\lc y \rc \text{ and } a_{r}\lc x\rc = a_{r}\lc y \rc \text{ for all } r > u_{0}.    
	\end{multline*}
	This partial order has a minimum element $x_{min} = 0$ and a maximum element $ x_{max}= \sum_{r=1}^{+\oo} \lc d_{r} - 1 \rc q_{r-1}$. 
	We define the \textit{Vershik map} $V: \overline{ \Gamma_{\bar{d}}} \to \overline{ \Gamma_{\bar{d}}} $ by 
	\begin{equation}\label{Eq2_Vershik Map}
		\textrm{V}\lc x \rc \; = \;  \left\{
		\begin{array}{cl}
			\textrm{suc}(x) & \quad  \text{if } x \neq x_{max} \, , \\
			x_{min} & \quad \text{if } x = x_{max}  , 
		\end{array}
		\right.
	\end{equation} 
	where $\textrm{suc}\lc x\rc $ denotes the successor of $x$ with respect to the order $<$. In this context, $\textrm{V}\lc x \rc$ coincides with the addition of $1$, that is, $\textrm{V}\lc x \rc = x +1$  in $\overline{ \Gamma_{\bar{d} }}$.

	\section {Definition of the operator $\tilde{S}_{\bar{d},\bar{p}}$ on  $\mc{C}\lc \overline{ \Gamma_{\bar{d}}} \rc$ }\label{Sec_Defiition of the Operator}
	
	Fix a sequence of positive integers $\bar{d}$ and a sequence of positive probabilities $\bar{p} $ as defined in \eqref{Eq1_Sequence of Integers} and \eqref{Eq1_Sequence of Probabilities}, respectively. 
	For every $x \in \overline{\Gamma_{\bar{d} }}$, define the \textit{counter} $s_{x} $ (a natural number or $+\oo$ as:  
	\begin{equation*}\label{Eq1_Counter Continuous State Space}
		s_{x} \;= \; s_{x, \bar{d}} := \; \min\lfp 1\le r \le +\oo: \; a_{r}\lc x \rc \neq d_{r}-1 \rfp 
	\end{equation*}
	(this is analogous to Equation \eqref{Eq1_Counter} from earlier).

	\paragraph{}
	
	The stochastic adding machine $\textrm{AMFC}_{\bar{d}, \bar{p}}$ naturally extends from $\Gamma_{\bar{d}}$ to the larger space $\overline{\Gamma_{\bar{d}}}$. 
	Accordingly, for any two points $x, y$ in $\overline{\Gamma_{\bar{d}}}$, the \textit{transition probability} $\textrm{p}\lc x,y \rc = \textrm{p}_{\bar{d}, \bar{p}}\lc x, y \rc$ is defined using the same probabilistic rules as in equation \eqref{Eq1_Transition Probabilities Integers}, with $x$ and $y$  replacing $n$ and $m$, respectively.

	\paragraph{}
	
	Define the operator $\tilde{S}_{\bar{d}, \bar{p}}: \mc{C}\lc \overline{ \Gamma_{\bar{d}} } \rc \to \mc{C} \lc \overline{ \Gamma_{\bar{d}} } \rc$, such that for any continuous map $g$ and for each point $x$ in $\overline{\Gamma_{\bar{d} }}$: 
	\begin{equation*}
		\lc \tilde{S}_{\bar{d},\bar{p}} g \rc \lc x \rc :=  \; \sum_{y \in \overline{\Gamma_{\bar{d} }} } \textrm{p}\lc x, y \rc \cdot g\lc y \rc 
	\end{equation*}
	Using the earlier formula for transition probabilities, this becomes: 
	\begin{equation}\label{Eq3_Transition Operator}
		\lc \tilde{S}_{\bar{d},\bar{p}} g \rc \lc x \rc   \; {=} \; \lc 1 - p_{1} \rc g\lc x \rc + \lc \prod_{r=1}^{s_{x}} p_{r} \rc g\lc \textrm{V}\lc x \rc \rc + \sum_{s = 1}^{s_{x} -1 } \lc \prod_{r=1}^{s} p_{r} \rc \lc 1 -p_{s + 1} \rc g\lc  T_{s}\lc x \rc \rc ,
	\end{equation}
	where $T_{s}$ is defined as:  
	\begin{equation}\label{Eq3_Trancation Integers}
		T_{s}\lc x \rc := \; x - \sum_{r=1}^{s} \lc d_{r} -1 \rc q_{r-1} \; = \; \sum_{r= s+1}^{+\oo} a_{r}\lc x\rc q_{r-1}    \quad \text{for }1 \le s \le s_{x}.     
	\end{equation}
	This formula adjusts depending on the value  of $s_{x}$: 
	\begin{itemize}
		\item If $s_{x} =1$, then: 
		$$ \lc \tilde{S}_{\bar{d}, \bar{p}} \rc g\lc x \rc \; = \; \lc 1 - p_{1} \rc g\lc x \rc  + p_{1} g\lc V\lc x \rc \rc ,$$
		
		\item If $x = x_{max}$ (i.e., $s_{x} = +\oo$), then: 
		$$ \lc \tilde{S}_{\bar{d}, \bar{p}} \rc g\lc x_{max} \rc \; = \; \lc 1 - p_{1} \rc g\lc x_{max} \rc + \lc \prod_{r=1}^{\oo } p_{r} \rc g\lc x_{min} \rc + \sum_{s = 1}^{\oo } \lc \prod_{r=1}^{s} p_{r} \rc \lc 1 -p_{s + 1} \rc g\lc  T_{s}\lc x_{max} \rc \rc .$$ 
		
	\end{itemize}
	
	\paragraph{}
	
	The operator $\tilde{S}_{\bar{d}, \bar{p}} : \mc{C}\lc \overline{ \Gamma_{\bar{d}} } \rc \to \mc{C}\lc \overline{ \Gamma_{\bar{d}} } \rc $ is well-defined, meaning that for any continuous map $g$ in $ \mc{C}\lc \overline{\Gamma_{\bar{d} }} \rc$, the image $\tilde{S}_{\bar{d}, \bar{p}} g$ is also continuous. Moreover, it is a bounded linear operator with norm satisfying: $ || \tilde{S}_{\bar{d}, \bar{p} } || \le 1$. 
	This follows from the identity: 
	$$ \lc 1 - p_{1} \rc + \prod_{r = 1}^{+\oo} p_{r} + \sum_{s = 1}^{+\oo} \lc \lc 1 - p_{s + 1} \rc \prod_{r=1}^{s} p_{r} \rc \; = \; 1 .$$ 
	
	Importantly, $\tilde{S}_{\bar{d}, \bar{p}} : \mc{C}\lc \overline{ \Gamma_{\bar{d}} } \rc \to \mc{C}\lc \overline{ \Gamma_{\bar{d}} } \rc$ extends the operator $S_{\bar{d}, \bar{p}}$ defined on $ l^{\oo}\lc \Z_{+} \rc$. That is, we can think of the continuous state space as a compactification of $\Z_{+}$, and embed functions from  $\mc{C}\lc \overline{ \Gamma_{\bar{d}} } \rc$ into $l^{\oo}\lc \Z_{+} \rc$ using the operator:
	$$ P : \mc{C}\lc \overline{ \Gamma_{\bar{d}} } \rc \to l^{\oo}\lc \Z_{+} \rc, \quad g \mapsto \lc g\lc n \rc \rc_{n \in \Z_{+}} . $$
	This operator is injective because $\Z_{+}$ is dense in $\overline{\Gamma_{\bar{d} }}$. So each continuous map $g$ is uniquely determined by its values on $\Z_{+}$. As a result, we have the compatibility condition:
	\begin{equation}\label{Eq3_Projection of the Stochastic Operator}
		\lc \tilde{S}_{\bar{d}, \bar{p}}g \rc \lc n \rc \; = \;  \lc S_{\bar{d}, \bar{p}} \circ P g \rc\lc n \rc \quad \text{for all }\; n \in \Z_{+} .
	\end{equation}

	\section{Main result}\label{Sec_Main Result}

	The main result characterizes the spectrum of the operator $\tilde{S}_{\bar{d}, \bar{p}}$ acting on the space $\mc{C}\lc \overline{\Gamma_{\bar{d} }} \rc$ when the sequence $\bar{d}$ is bounded.

	\begin{theorem}\label{Theor_Conditional Main Result}
		Let $\bar{d}= \lc d_{r} \rc_{r \ge 1}$ and $\bar{p} = \lc p_{r} \rc_{r \ge 1}$ be sequences of positive integers and probabilities, respectively, as defined in equations \eqref{Eq1_Sequence of Integers} and \eqref{Eq1_Sequence of Probabilities}.
		
		Assume that the sequence of integers $\bar{d} = \lc d_{r} \rc_{r \ge 1} $ is bounded. Then the following holds: 
		\begin{equation}\label{Eq__Theor_Conditional Main Result}
			\sigma \lc \tilde{S}_{\bar{d},\bar{p}}, \mc{C} \lc \overline{\Gamma_{\bar{d} } } \rc \rc \; = \; \sigma_{ap}\lc \tilde{S}_{\bar{d},\bar{p}}, \mc{C} \lc \overline{\Gamma_{\bar{d} } } \rc \rc \;  = \; \begin{cases}
				\mc{E}_{\bar{d}, \bar{p}}   & \quad \text{if } \prod_{r=1}^{+\oo} p_{r} \; = \; 0 , \\
				\partial \mc{E}_{\bar{d}, \bar{p}}   & \quad \text{if } \prod_{r=1}^{+\oo} p_{r} \; > \; 0  .
			\end{cases} 
		\end{equation}
		In particular, the point spectrum $\sigma_{pt} \lc \tilde{S}_{\bar{d}, \bar{p}}, \mc{C} \lc \overline{\Gamma_{\bar{d} }} \rc \rc $ coincides with set $\bigcup_{r=1}^{\infty} \tilde{f}_{r}^{-1}\lfp 1 \rfp$, and its closure is the boundary $\partial \mc{E}_{\bar{d}, \bar{p}}$, i.e., 
		\begin{equation}\label{Eq_Theor Conditional Point Spectrum} 
			\sigma_{pt} \lc \tilde{S}_{\bar{d}, \bar{p}}, \mc{C} \lc \overline{\Gamma_{\bar{d} }} \rc \rc \; = \;  \bigcup_{r=1}^{\infty} \tilde{f}_{r}^{-1}\lfp 1 \rfp  
		\end{equation}
		and
		\begin{equation}\label{Eq_Theor Conditional Spectrum Boundary}
			\overline{\bigcup_{r=1}^{\infty} \tilde{f}_{r}^{-1}\lfp 1 \rfp} \;= \; \partial \mc{E}_{\bar{d}, \bar{p}} . 
		\end{equation}

	\end{theorem}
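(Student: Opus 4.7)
My plan is to establish Theorem~\ref{Theor_Conditional Main Result} in four stages: (i) identify the point spectrum, (ii) compute its closure via Montel, (iii) set up a resolvent for $\lambda\notin\mc E_{\bar d,\bar p}$, and (iv) split the analysis according to $\prod p_r=0$ versus $\prod p_r>0$.

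\textbf{Stage 1 (Point spectrum, identity \eqref{Eq_Theor Conditional Point Spectrum}).} For each $r\ge 1$ and each $\lambda\in\tilde f_r^{-1}\{1\}$, I set $\mu_j(\lambda):=(\tilde f_{j-1}(\lambda)-(1-p_j))/p_j$ (with $\tilde f_0=\mathrm{id}$) and define $g_\lambda(x):=\prod_{j=1}^{r}\mu_j(\lambda)^{a_j(x)}$. Because $\mu_j^{d_j}=\tilde f_j(\lambda)$ and $f_s(1)=1$ forces $\mu_s=1$ for $s>r$, this is a well-defined cylinder function in $\mc{C}(\overline{\Gamma_{\bar d}})$. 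A case analysis on the value of $s_x$ in \eqref{Eq3_Transition Operator}, combined with the telescoping identity underlying the Markov property of $\tilde S_{\bar d,\bar p}$, yields $\tilde S_{\bar d,\bar p}g_\lambda=\lambda g_\lambda$. Conversely, if $g\in\mc{C}(\overline{\Gamma_{\bar d}})$ is any eigenfunction for $\lambda$, the eigenvalue equation applied at $n=0,1,2,\dots$ recursively determines $g$ on $\Z_{+}$, and induction on $n$ gives the closed form $g(n)=(\prod_{j\ge 1}\mu_j(\lambda)^{a_j(n)})\,g(0)$. Continuity on the compact space $\overline{\Gamma_{\bar d}}$ then forces $\mu_j(\lambda)\to 1$; but because $1$ is a repelling fixed point of each $f_r$ (since $f_r'(1)=d_r/p_r\ge 2$), the convergence $\tilde f_j(\lambda)\to 1$ is impossible unless $\tilde f_j(\lambda)=1$ for all $j$ sufficiently large, proving $\sigma_{pt}\subseteq\bigcup_{r\ge 1}\tilde f_r^{-1}\{1\}$.

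\textbf{Stage 2 (Closure of the point spectrum, identity \eqref{Eq_Theor Conditional Spectrum Boundary}).} I apply Montel's theorem to $\{\tilde f_r\}_{r\ge 1}$. Since $|\tilde f_r|$ is locally uniformly bounded on $\mathrm{int}(\mc E_{\bar d,\bar p})$ and tends to $+\infty$ on $\C\setminus\mc E_{\bar d,\bar p}$, the family is normal in both regions, so preimages of $1$ cannot accumulate there. Near any point of $\partial\mc E_{\bar d,\bar p}$ the family fails to be normal, hence by Montel it omits at most two values in $\C$; since $f_r(1)=1$ for every $r$, the value $1$ is not omitted, and its preimages therefore cluster densely on $\partial\mc E_{\bar d,\bar p}$. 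Combined with Stage~1 and the inclusion \eqref{Eq1_Approximate Spectrum Inclusion}, this already gives $\partial\mc E_{\bar d,\bar p}\subseteq\sigma_{ap}(\tilde S_{\bar d,\bar p},\mc{C}(\overline{\Gamma_{\bar d}}))\subseteq\sigma(\tilde S_{\bar d,\bar p},\mc{C}(\overline{\Gamma_{\bar d}}))$.

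\textbf{Stage 3 (Full spectrum and dichotomy).} For $\lambda\notin\mc E_{\bar d,\bar p}$ the iterates $|\tilde f_r(\lambda)|$ diverge rapidly, and the cylinder-recursive procedure for solving $(\tilde S_{\bar d,\bar p}-\lambda I)g=h$ produces an absolutely convergent formula for a continuous $g$; this yields $\sigma(\tilde S_{\bar d,\bar p})\subseteq\mc E_{\bar d,\bar p}$. For $\lambda\in\mathrm{int}(\mc E_{\bar d,\bar p})$, two scenarios arise. When $\prod_{r}p_r>0$, local uniform boundedness of $\{\tilde f_r(\lambda)\}$ in a neighborhood of $\lambda$ lets the same recursion produce a continuous inverse of $\tilde S_{\bar d,\bar p}-\lambda I$, so $\lambda$ is regular and $\sigma(\tilde S_{\bar d,\bar p})=\partial\mc E_{\bar d,\bar p}$. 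When $\prod_{r}p_r=0$, the truncated cylinder functions $g_N(x):=\prod_{j\le N}\mu_j(\lambda)^{a_j(x)}$ serve as approximate eigenfunctions: the defect $(\tilde S_{\bar d,\bar p}-\lambda I)g_N$ vanishes identically at points $x$ with $s_x\le N$, while at the remaining points its magnitude is bounded by the probabilistic weight $\prod_{j=1}^{N}p_j\to 0$ of the carry reaching beyond depth $N$; since $\|g_N\|_\infty\ge|g_N(0)|=1$, this forces $\lambda\in\sigma_{ap}(\tilde S_{\bar d,\bar p})$, and thus $\sigma_{ap}(\tilde S_{\bar d,\bar p})=\sigma(\tilde S_{\bar d,\bar p})=\mc E_{\bar d,\bar p}$.

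The main obstacle I anticipate is Stage~3. The complex-dynamical input---normality of $\{\tilde f_r\}$ on $\mathrm{int}(\mc E_{\bar d,\bar p})$---is classical, but turning it into quantitative estimates on the cylinder-recursive resolvent in the transient case, and on the defect of the truncated approximate eigenfunctions in the null-recurrent case, requires careful uniform control of $\mu_j(\lambda)$ and of the tail products $\prod_{j>N}p_j$. The dichotomy between $\prod p_r>0$ and $\prod p_r=0$ should correspond precisely to whether the formal infinite product $\prod\mu_j(\lambda)^{a_j(x)}$ defines a continuous function on $\overline{\Gamma_{\bar d}}$, and separating the two regimes rigorously via this criterion is the most technical point of the argument.
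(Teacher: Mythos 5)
Your Stage 1 and your null-recurrent half of Stage 3 match the paper's proof (eigenfunctions of cylinder form, and truncated approximate eigenfunctions $g_N$ with defect controlled by $\prod_{j\le N}p_j$). Stage 2, however, has a genuine gap. Montel's normality theorem gives you that near a boundary point the union $\bigcup_{r}\tilde f_r(U)$ omits at most one finite value, but your justification that ``$1$ is not omitted, since $f_r(1)=1$'' is circular whenever $1\notin U$: the assertion $1\in\tilde f_r(U)$ for some $r$ is precisely $\tilde f_r^{-1}\{1\}\cap U\neq\emptyset$, i.e.\ the very thing you want to prove, and being a fixed point of each $f_r$ does not place $1$ in $\tilde f_r(U)$. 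The paper's Lemma~\ref{Lem5_Boundary and Non-Normal Points}(2) closes this: it uses boundedness of $\bar d$ to extract a subsequence $(r_k)$ of \emph{constant} degree $d$, writes $\tilde f_{r_k}=(h_{r_k}\circ\tilde f_{r_k-1})^{d}$, and notes that any $w\neq 0$ has $d\ge 2$ distinct $d$-th roots, at most one of which can be Montel-exceptional for the non-normal family $\{h_{r_k}\circ\tilde f_{r_k-1}\}$; hence the exceptional set can only be $\{0\}$. This is exactly where the hypothesis that $\bar d$ is bounded enters, and you do not invoke it anywhere in your Stage 2.

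The transient half of Stage 3 is more than an unfinished calculation --- as sketched it would not close. Local uniform boundedness of $\{\tilde f_r\}$ on $\operatorname{int}(\mathcal E_{\bar d,\bar p})$ does not, by itself, yield a resolvent via a ``cylinder-recursive formula'': when $\prod_r p_r>0$ the carry weights $\prod_{r\le s}p_r$ do not decay, so no pointwise Neumann-type expansion converges. The paper never inverts $\tilde S_{\bar d,\bar p}-\lambda \tilde I$ directly on the interior; it argues indirectly in two steps. First, Proposition~\ref{Prop_Transient Case Spectrum Limit} (via the contraction-surjectivity Lemma~\ref{Lem5_Surjective Operators on Banach Spaces} applied to the shifted operators $\tilde S_r$, together with the spectral recursion $f_r(\sigma(\tilde S_r))=\sigma(\tilde S_{r+1})$ of Lemma~\ref{Lem1_Inductive_Spectra}) shows that any $\lambda\in\sigma(\tilde S_{\bar d,\bar p})$ satisfies $\lim_r|\tilde f_r(\lambda)|=1$. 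Second, a normal-families dichotomy combined with an angular-coordinate argument (the spread of arguments under $z\mapsto z^{d}$ eventually sweeps through $0\bmod 2\pi$) produces, in each component $W$ of $\operatorname{int}(\mathcal E_{\bar d,\bar p})$, a point $\lambda_w$ with $\tilde f_r(\lambda_w)\to 0$, whence $\tilde f_r\to 0$ on all of $W$; combining the two shows $\operatorname{int}(\mathcal E_{\bar d,\bar p})\cap\sigma=\emptyset$. Finally, your unconditional inclusion $\sigma(\tilde S_{\bar d,\bar p})\subseteq\mathcal E_{\bar d,\bar p}$ is stated only as a vague ``absolutely convergent formula''; the paper proves it cleanly through the spectral recursion and $\|\tilde S_{r+1}\|\le 1$ rather than through any explicit resolvent.
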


	\begin{remark}
		In the special case where the sequence $d_{r} = d$ and $p_{r} = p $ for all $r \ge 1$, for some $d \ge 2$ and $p \in \lc 0,1 \rc$, Theorem \ref{Theor_Conditional Main Result} asserts that 
		$$ \sigma \lc \tilde{S}_{\bar{d},\bar{p}}, \mc{C} \lc \overline{\Gamma_{\bar{d} } } \rc \rc \; = \; \mc{E}_{\bar{d}, \bar{p}} \quad \text{and} \quad \overline{\sigma_{pt} \lc \tilde{S}_{\bar{d},\bar{p}},  \mc{C} \lc \overline{\Gamma_{\bar{d} } } \rc \rc} \; = \; \partial \mc{E}_{\bar{d}, \bar{p}} .$$ 
		This recovers the result of Killeen and Taylor \cite{Killeen_Taylor-A_Stochastic Adding_Machine_And_Complex_Dynamics} who demonstrated that for the case where $d_{r}=2$ and $p_{r}= p >0$ for all $r \geq 1$, the spectrum $\sigma\lc \tilde{S}_{\bar{2}, \bar{p} }, \mc{C}\lc \overline{\Gamma_{\bar{2}}} \rc \rc $ is contained within $ \mc{E}_{\bar{2}, \bar{p}}$, where $\mc{E}_{\bar{2}, \bar{p}}$ is the filled Julia set of the polynomial $f: \mathbb{C} \to \mathbb{C} $ given by $f\lc z \rc = \lc z- \lc 1-p \rc \rc^{2} / p^{2}$.

	\end{remark}

	\paragraph{Plan for the proof of Theorem \ref{Theor_Conditional Main Result}.}
	\begin{itemize}
		
		\item We begin by demonstrating that $\bigcup_{r=1}^{\infty} \tilde{f}_{r}^{-1}\lfp 1 \rfp \sub \sigma_{pt} \lc \tilde{S}_{\bar{d}, \bar{p}}, \mc{C} \lc \overline{\Gamma_{\bar{d} }} \rc \rc $ and that $ \overline{\bigcup_{r=1}^{\infty} \tilde{f}_{r}^{-1}\lfp 1 \rfp} \sub \partial \mc{E}_{\bar{d}, \bar{p}}$. 
		
		\item We prove that $\sigma \lc \tilde{S}_{\bar{d}, \bar{p}}, \mc{C} \lc \overline{\Gamma_{\bar{d} }} \rc \rc \sub   \mc{E}_{\bar{d},\bar{p}}$ by leveraging the probability properties of the associated Markov chain.

		\item Assuming that the sequence $\bar{d}$ is bounded, we show that $\sigma_{pt} \lc \tilde{S}_{\bar{d}, \bar{p}}, \mc{C} \lc \overline{\Gamma_{\bar{d} }} \rc \rc \sub \bigcup_{r=1}^{\infty} \tilde{f}_{r}^{-1}\lfp 1 \rfp $, and establish that $ \overline{\bigcup_{r=1}^{\infty} \tilde{f}_{r}^{-1}\lfp 1 \rfp} = \partial \mc{E}_{\bar{d}, \bar{p}}$ by applying techniques from the theory of Julia sets and the theory of families of normal functions. Consequently, we obtain that $\partial \mc{E}_{\bar{d},\bar{p}} \sub \sigma \lc \tilde{S}_{\bar{d}, \bar{p}}, \mc{C} \lc \overline{\Gamma_{\bar{d} }} \rc \rc$.

		\item Assuming that $\bar{d}$ is bounded, we show that $\prod_{r=1}^{+\infty}p_{r} =0 $ implies  $\mc{E}_{\bar{d},\bar{p}} \sub \sigma\lc \tilde{S}_{\bar{d}, \bar{p}}, \mc{C} \lc \overline{\Gamma_{\bar{d} }} \rc \rc $ by using a constructive proof. We also show that $\prod_{r=1}^{+\oo} p_{r} > 0$ implies $\sigma\lc \tilde{S}_{\bar{d}, \bar{p}}, \mc{C} \lc \overline{\Gamma_{\bar{d} }} \rc \rc \sub \partial \mc{E}_{\bar{d}, \bar{p}}$ by exploiting the dynamics of the system.

	\end{itemize}

	\section{Proof of the main result}\label{Sec_Proof of the Main Results}

	This section is organized into two main parts. In Section \ref{Section_Proof Unconditional Results}, we establish spectral properties of the operator $\tilde{S}_{\bar{d}, \bar{p}}$ that hold independently of the specific choices of the sequences $\bar{d}$ and $\bar{p}$. The general results are encapsulated in Proposition \ref{Prop_Unconditional Main Result}, and they constitute a key step toward the proof of Theorem \ref{Theor_Conditional Main Result}. 
	In Section \ref{Section_Proof Conditional Results}, we complete the proof of Theorem \ref{Theor_Conditional Main Result} under the assumption that the sequence $\bar{d}$ is bounded.

	\begin{proposition}\label{Prop_Unconditional Main Result}
		Let $\bar{d}= \lc d_{r} \rc_{r \ge 1}$ be a sequence of positive integers and $\bar{p} = \lc p_{r} \rc_{r \ge 1}$ be a sequence of probabilities, as defined in equations \eqref{Eq1_Sequence of Integers} and \eqref{Eq1_Sequence of Probabilities}, respectively. 
		Then, the following result holds: 
		\begin{equation}\label{Eq_Theor_Unconditional Result}
			\sigma \lc \tilde{S}_{\bar{d}, \bar{p}}, \mc{C} \lc \overline{\Gamma_{\bar{d} }} \rc \rc \; \sub \;  \mc{E}_{\bar{d},\bar{p}}.
		\end{equation}
		Furthermore, the set $\bigcup_{r=1}^{\infty} \tilde{f}_{r}^{-1}\lfp 1 \rfp$ is contained in the point spectrum $\sigma_{pt} \lc \tilde{S}_{\bar{d}, \bar{p}}, \mc{C} \lc \overline{\Gamma_{\bar{d} }} \rc \rc $, and its closure is contained in the boundary of $ \mc{E}_{\bar{d}, \bar{p}}$, i.e., 
		\begin{equation}\label{Eq_Theor_Unconditional Point Spectrum Inclusion} 
			\overline{} \bigcup_{r=1}^{\infty} \tilde{f}_{r}^{-1}\lc \lfp 1 \rfp \rc \; \sub \; \sigma_{pt} \lc \tilde{S}_{\bar{d}, \bar{p}}, \mc{C} \lc \overline{\Gamma_{\bar{d} }} \rc \rc  \quad \text{and} \quad \overline{\bigcup_{r=1}^{+\oo} \tilde{f}_{r}^{-1}\lc \lfp 1 \rfp \rc} \sub \partial \mc{E}_{\bar{d}, \bar{p}}  .
		\end{equation}

	\end{proposition}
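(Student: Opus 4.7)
The plan is to establish the three claimed inclusions separately.

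For $\bigcup_{r \geq 1} \tilde f_r^{-1}(\{1\}) \subseteq \sigma_{pt}(\tilde S_{\bar d, \bar p}, \mathcal C(\overline{\Gamma_{\bar d}}))$, I would produce explicit eigenvectors. Given $\lambda$ with $\tilde f_N(\lambda) = 1$, set $\omega_r := (\tilde f_{r-1}(\lambda) - (1-p_r))/p_r$ for $1 \leq r \leq N$, so that $\omega_r^{d_r} = \tilde f_r(\lambda)$ and $(1-p_r) + p_r \omega_r = \tilde f_{r-1}(\lambda)$. The candidate eigenvector is the cylinder function $v_\lambda(x) := \prod_{r=1}^N \omega_r^{a_r(x)}$, which is continuous on $\overline{\Gamma_{\bar d}}$ (depending only on finitely many coordinates) and satisfies $v_\lambda(0) = 1 \neq 0$. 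The identity $\tilde S_{\bar d, \bar p} v_\lambda = \lambda v_\lambda$ reduces, via a case analysis on $s_x$, to an algebraic identity in the $\omega_r$'s and $p_r$'s that I would prove by induction on $s_x$, telescoping via the two relations above. The remaining case $s_x > N$ (including $s_x = +\infty$ at $x_{\max}$) is handled by exploiting $\tilde f_r(\lambda) = 1$ and $\omega_r = 1$ for all $r > N$, together with the telescoping identity $\sum_{j \geq N} \prod_{r=1}^{j} p_r (1 - p_{j+1}) + \prod_{r=1}^{+\infty} p_r = \prod_{r=1}^{N} p_r$ in the sum defining $\tilde S_{\bar d, \bar p} v_\lambda(x)$.

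For $\overline{\bigcup_r \tilde f_r^{-1}(\{1\})} \subseteq \partial \mathcal E_{\bar d, \bar p}$, the inclusion $\tilde f_r^{-1}(\{1\}) \subseteq \mathcal E_{\bar d, \bar p}$ is immediate since once $\tilde f_r(\lambda) = 1$ the forward orbit is constant (hence bounded), and closedness of $\mathcal E_{\bar d, \bar p}$ places the closure inside $\mathcal E_{\bar d, \bar p}$. To refine to the topological boundary, I would use that $1$ is a uniformly repelling fixed point of every $f_r$: $f_r(1) = 1$ with $|f_r'(1)| = d_r/p_r \geq 2$. By the open mapping theorem applied to the non-constant polynomial $\tilde f_N$, any neighborhood of $\lambda \in \tilde f_N^{-1}(\{1\})$ maps onto a neighborhood $V$ of $1$. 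Standard linearization estimates at a repelling fixed point, combined with the uniform expansion bound $|f_r'(1)| \geq 2$, ensure that every $z \in V \setminus \{1\}$ sufficiently close to $1$ satisfies $|f_{N+k} \circ \cdots \circ f_{N+1}(z)| \to +\infty$, hence lies outside $\mathcal E_{\bar d, \bar p}$. Pulling back via $\tilde f_N$ furnishes points outside $\mathcal E_{\bar d, \bar p}$ arbitrarily close to $\lambda$, so $\lambda \in \partial \mathcal E_{\bar d, \bar p}$.

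The most delicate inclusion, and the main obstacle, is $\sigma(\tilde S_{\bar d, \bar p}, \mathcal C(\overline{\Gamma_{\bar d}})) \subseteq \mathcal E_{\bar d, \bar p}$. For $|\lambda| > 1$, the Neumann series $-\sum_{n \geq 0} \lambda^{-n-1} \tilde S_{\bar d, \bar p}^n$ converges in operator norm since $\|\tilde S_{\bar d, \bar p}\| \leq 1$. For the remaining $\lambda \notin \mathcal E_{\bar d, \bar p}$, the strategy is to combine the intertwining $P \tilde S_{\bar d, \bar p} = S_{\bar d, \bar p} P$ with the known equality $\sigma(S_{\bar d, \bar p}, \ell^\infty(\mathbb Z_+)) = \mathcal E_{\bar d, \bar p}$ from \cite{mv}: this gives, for every $h \in \mathcal C(\overline{\Gamma_{\bar d}})$, a unique bounded solution $u \in \ell^\infty(\mathbb Z_+)$ of $(S_{\bar d, \bar p} - \lambda I) u = P h$. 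The nontrivial task is to show that $u$ is uniformly continuous with respect to the $\mathrm{CSN}_{\bar d}$-topology, so that it extends to an element of $\mathcal C(\overline{\Gamma_{\bar d}})$. Here the probabilistic structure of the Markov chain $\mathrm{AMFC}_{\bar d, \bar p}$ enters: the $n$-step transition probabilities factor through the counter structure, and the associated generating functions of hitting-time variables are controlled by the iterates $\tilde f_r(\lambda)$; the divergence $|\tilde f_r(\lambda)| \to +\infty$ characterising $\lambda \notin \mathcal E_{\bar d, \bar p}$ furnishes the uniform decay needed to pass from the cylinder structure on $\mathbb Z_+$ to continuity on $\overline{\Gamma_{\bar d}}$. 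This uniform continuity step is where the main technical work lies.
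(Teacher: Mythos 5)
Your treatment of the eigenvalue inclusion $\bigcup_r \tilde f_r^{-1}\{1\} \subseteq \sigma_{pt}$ matches the paper exactly: your $\omega_r$ is the paper's $\iota_\lambda(r) = h_r\circ\tilde f_{r-1}(\lambda)$, and the verification $\tilde S_{\bar d,\bar p} g_\lambda = \lambda g_\lambda$ is the same case analysis on $s_x$ driven by the identity in \eqref{Eq5_Theor_Recurrent Case Inductive Identity}.

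For the boundary inclusion, the first step (open mapping) is the right idea, but the subsequent claim is wrong: it is \emph{not} true that ``every $z \in V \setminus \{1\}$ sufficiently close to $1$'' escapes to infinity under $f_{N+k}\circ\cdots\circ f_{N+1}$. Points $z$ with $|z| < 1$ close to $1$ have $|h_r(z)| \le |z|$ and iterate toward $0$ (indeed, this is precisely the behavior established in the proof of \eqref{Eq5_Prop5_Transient Case Zero-One Law} for interior points of $\mathcal E_{\bar d,\bar p}$ in the transient case), and points on the unit circle stay there. Moreover, ``standard linearization at a repelling fixed point'' is a statement about iterating a \emph{single} map; for a nonautonomous composition of different $f_r$'s, the linearizing neighborhoods need not be uniform, especially if $\bar d$ is unbounded (which the Proposition does not assume). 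None of this is needed: the open mapping theorem hands you, in every $\epsilon$-ball around $\lambda$, a point $z_\epsilon$ with $|\tilde f_{r_0}(z_\epsilon)| > 1$, and then the equivalence \eqref{Eq_Divergent Sequences} of Lemma \ref{Lem_Structure of Fibered Julia Set 1} (proved via the elementary monotonicity $|h_r(z)| \ge |z|$ whenever $|z| > 1$) immediately gives $z_\epsilon \notin \mathcal E_{\bar d,\bar p}$. That is what the paper does, and it is both simpler and correct.

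For $\sigma(\tilde S_{\bar d,\bar p}) \subseteq \mathcal E_{\bar d,\bar p}$, your route is genuinely different from the paper's and is the part of your proof that is materially incomplete. You propose to invoke the $\ell^\infty$ result of \cite{mv} to get a bounded solution $u$ of $(S_{\bar d,\bar p} - \lambda I)u = Ph$ and then show $u$ is $\mathrm{CSN}_{\bar d}$-uniformly continuous so it lifts to $\mathcal C(\overline{\Gamma_{\bar d}})$. You acknowledge this continuity step as ``where the main technical work lies,'' but you supply no argument for it, and it is not a routine estimate — knowing that $(v_n)$ is bounded and that $(\tilde f_r(\lambda))_r$ diverges gives no obvious modulus of continuity of $u$ across cylinders. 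The paper sidesteps this entirely by a cleaner structural argument: Lemma \ref{Lem1_Inductive_Spectra} establishes $f_r\big(\sigma(\tilde S_r, \mathcal C(\overline{\Gamma_r}))\big) = \sigma(\tilde R_r^{d_r}, \mathcal C(\overline{\Gamma_r})) = \sigma(\tilde S_{r+1}, \mathcal C(\overline{\Gamma_{r+1}}))$, the first equality by the spectral mapping theorem and the second by an explicit intertwining via the block maps $\tilde\Pi_{k,r}$ and $\tilde F_{k,r}$ (relation \eqref{xxc}). Iterating gives $\tilde f_r\big(\sigma(\tilde S_{\bar d,\bar p})\big) = \sigma(\tilde S_{r+1}) \subseteq \overline{B(0,1)}$ since $\|\tilde S_{r+1}\| \le 1$; hence every $\lambda\in\sigma(\tilde S_{\bar d,\bar p})$ has $|\tilde f_r(\lambda)| \le 1$ for all $r$, which is exactly $\lambda \in \mathcal E_{\bar d,\bar p}$. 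This avoids any resolvent regularity question and, incidentally, requires no appeal to the $\ell^\infty$ spectrum from \cite{mv} at all. If you want to salvage your route, you would need to prove the uniform-continuity transfer, which in effect amounts to recapitulating something like Lemma \ref{Lem1_Inductive_Spectra}.
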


	\paragraph{}

	Throughout this section, the fibered filled Julia set $\mc{E}_{\bar{d}, \bar{p}}$ as well as the complex polynomials $\tilde{f}_{r}$ and $f_{r}$ for $r \ge 1$, are defined as in equations \eqref{Eq1_Fibered Filled Julia Set} and \eqref{Eq1_Julia Polynomials}, respectively. For consistency, we also set $\tilde{f}_{0} = f_{0}$ to be the identity map on $\C$.

	\subsection{Proof of Proposition \ref{Prop_Unconditional Main Result}.}\label{Section_Proof Unconditional Results} 
	
	We will prove Proposition \ref{Prop_Unconditional Main Result} by establishing the inclusions in equations  \eqref{Eq_Theor_Unconditional Point Spectrum Inclusion} and \eqref{Eq_Theor_Unconditional Result}, in that order. 
	The auxiliary lemmas used throughout this section are proved separately in Section \ref{Sec_Auxiliary Lemmas Proofs}. These lemmas are stated in a general form, as they will be applied in various parts of the argument.

	\paragraph{}
	
	Let $\lambda \in \C$ be a complex number. We define the complex-valued sequence $ \bm{v}_{\lambda}  : \Z_{+} \to \C$ by 
	\begin{equation}\label{ps1}
		v_\lambda (n) := \quad \prod_{r=1}^{\infty} \lc \iota_{\lambda}\lc r \rc \rc^{a_{r}\lc n \rc} \quad \text{for } \; n \in \Z_{+}, 
	\end{equation}
	where $a_{r}\lc n \rc$ is the $r$-th digit of $n$ in its $q$-expansion. The map $\iota_{\lambda}\lc r \rc$ is given by 
	\begin{equation}\label{ps2}
		\iota_{\lambda}\lc r \rc := \quad \lc  h_r \comp \tilde{f}_{r-1}  \rc  \lc \lambda \rc ,
	\end{equation}
	with 
	\begin{equation}\label{ps3}
		h_{r} \lc z \rc := \quad \frac{z}{p_r} - \frac{1-p_r}{p_r},  \quad \quad \text{ for } r \ge 1 .
	\end{equation}
	By convention, we set $\lc \iota_{\lambda}\lc r \rc \rc^{a_{r}(n)} = 1$ whenever $\iota_{\lambda}\lc r \rc = 0$ and $a_{r}\lc n \rc = 0$.

	\begin{example}
		As an illustrative example, consider the case where $d_{r} = 2$ and $p_{r} = p$ for all $r \ge 1$, with $0 \;< p \;< 1$. 
		In this setting, the sequence $v_\lambda \lc n \rc$ reduces to $\prod_{r=1}^{+\oo} \lc \iota_{\lambda}\lc r \rc \rc^{a_{r}\lc n \rc}$, where the exponents $a_{r}\lc n \rc$ are the binary digits of $n$, namely, $n = \sum_{r=1}^{\oo} a_{r}\lc n \rc 2^{r-1}$. In particular, we have 
		$$ \iota_{\lambda}\lc 1 \rc  \; = \; {\lambda - 1 + p \over p}, \quad \text{and for any } r\ge 1, \quad \iota_{\lambda}\lc r+ 1 \rc \; = \; {1\over p}\iota_{\lambda}\lc r \rc^{2} - {1 - p \over p} .$$ 
		
	\end{example}

	\paragraph{}
	
	To establish the inclusions \eqref{Eq_Theor_Unconditional Point Spectrum Inclusion}, we use the following lemma, which guarantees that the fibered Julia set $ \mc{E}_{\bar{d}, \bar{p}}$ coincides with the point spectrum $\sigma_{pt}\lc S_{\bar{d}, \bar{p}}, l^{\oo}\lc \Z_{+}  \rc \rc$. Additionally, it  provides equivalent characterizations of $\mc{E}_{\overline{d}, \overline{p}}$ in terms of the sequences $\lc \iota_{\lambda}\lc r \rc \rc_{r \ge 1}$ and $\lc v_{\lambda}\lc n \rc \rc_{n \in \Z_{+}}$. 
	A version of this lemma was previously proved in \cite[Proposition 3.4 \& Lemma 3.6]{mv} but a a more detailed statement is presented below.

	\begin{lemma}\label{Lem_Structure of Fibered Julia Set 1}
		Let $\bar{d}= \lc d_{r} \rc_{r \ge 1}$ and $\bar{p} = \lc p_{r} \rc_{r \ge 1}$ be sequences of positive integers and probabilities, defined in relations \eqref{Eq1_Sequence of Integers} and \eqref{Eq1_Sequence of Probabilities}, respectively.   
		Then, the fibered filled Julia set $\mc{E}_{\bar{d}, \bar{p}}$ satisfies: 
		$$ \mc{E}_{\bar{d}, \bar{p}} \; = \;  \sigma_{pt}\lc S_{\bar{d}, \bar{p}}, l^{\oo}\lc \Z_{+} \rc \rc . $$
		More precisely,  a vector $ \bm{w} \in l^{\oo}\lc \Z_{+} \rc$ is an eigenvector associated with an eigenvalue $\lambda$ if and only if  $\lambda \in \mc{E}_{\bar{d}, \bar{p}}$, and there exist a non-zero constant $c \in \C\backslash \lfp 0 \rfp$ such that $\bm{w} = c \cdot \bm{v}_{\lambda}$, where $\bm{v}_{\lambda}$ is the sequence defined in \eqref{ps1}.  
		
		Furthermore, for a given $\lambda \in \C$, the sequence $\lc \lav \tilde{f}_{r}\lc \lambda \rc \rav \rc_{r=1}^{+\oo} $ diverges to infinity if and only if there exists an index $r_{0} \ge 1 $ such that $\lav \tilde{f}_{r_{0}}\lc \lambda \rc \rav \;> 1$. That is, 
		\begin{equation}\label{Eq_Divergent Sequences}
			\lim_{r \to +\oo} \lav \tilde{f}_{r}\lc \lambda \rc \rav \; = \; +\oo \quad \Longleftrightarrow \quad \lav \tilde{f}_{r_{0}}\lc \lambda \rc \rav  \;> 1 \; \text{for some } r_{0} \ge 1 .
		\end{equation}
		In particular, the following characterizations of $\mc{E}_{\bar{d}, \bar{p}}$ hold: 
		\begin{align*}
			\mc{E}_{\bar{d},\bar{p}} \; & = \; \lfp \lambda \in \mathbb{C} :  \lav \tilde{f}_{r-1}\lc \lambda \rc \rav \leq 1,\; \text{for every } r \geq 1 \rfp \\
			& = \;  \lfp \lambda \in \mathbb{C} : \;  \lav \iota_{\lambda}\lc r \rc \rav \leq 1,\; \text{for every } r \ge 1 \rfp \\
			& = \;  \lfp \lambda \in \mathbb{C} : \;  \lc v_{\lambda} \lc n \rc \rc_{n \in \Z_{+} } \; \text{ is bounded} \rfp . 
		\end{align*} 
		Here, the terms $\iota_{\lambda}\lc r \rc$ for $r \ge 1$ are defined as in \eqref{ps2}. 
		
	\end{lemma}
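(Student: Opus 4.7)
The plan is to proceed in three linked steps: (i) establish the escape estimate \eqref{Eq_Divergent Sequences} and deduce the three characterizations of $\mc{E}_{\bar{d},\bar{p}}$; (ii) verify that $\bm{v}_\lambda$ satisfies the eigenvalue equation whenever it is bounded; and (iii) prove uniqueness by strong induction, which together identify the eigenvectors and yield $\mc{E}_{\bar{d},\bar{p}} = \sigma_{pt}(S_{\bar{d},\bar{p}}, l^{\oo}(\Z_+))$.

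For step (i), writing $f_r(z) = h_r(z)^{d_r}$ with $p_r \in (0,1]$ and $d_r \ge 2$, one has
\[
|z| \ge 1 \; \Longrightarrow \; |h_r(z)| \ge \frac{|z|-(1-p_r)}{p_r} = \frac{|z|-1}{p_r} + 1 \ge |z| \; \Longrightarrow \; |f_r(z)| \ge |z|^{2}.
\]
Iterating from $|\tilde{f}_{r_0}(\lambda)| > 1$ gives $|\tilde{f}_{r_0+k}(\lambda)| \ge |\tilde{f}_{r_0}(\lambda)|^{2^k} \to +\oo$, which proves \eqref{Eq_Divergent Sequences} (the reverse implication is trivial). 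Consequently the orbit is bounded if and only if $|\tilde{f}_r(\lambda)| \le 1$ for every $r \ge 0$; the same calculation also shows that $|f_r(z)| \le 1$ forces $|z| \le 1$, so imposing the bound only for $r \ge 1$ gives the same set. The characterization in terms of $(\iota_\lambda(r))$ follows from $|\iota_\lambda(r)|^{d_r} = |\tilde{f}_r(\lambda)|$. For the $\bm{v}_\lambda$ characterization: if $|\iota_\lambda(r)| \le 1$ for all $r$, then trivially $|v_\lambda(n)| \le 1$ since only finitely many $a_r(n)$ are nonzero; conversely if $|\iota_\lambda(r_0)| > 1$ for some $r_0$, then $|\iota_\lambda(r)| \to +\oo$, and so $|v_\lambda(q_r - 1)| = \prod_{i=1}^r |\iota_\lambda(i)|^{d_i-1}$ diverges.

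For step (ii), fix $n$ with counter $s_n = s$. The digits of $n$, $n+1$, and each $T_{s'}(n)$ with $1 \le s' \le s-1$ agree at every index $r > s$ and also at index $s$ (since both $n$ and $T_{s'}(n)$ carry the digit $a_s(n)$ there), so the factor $\iota_\lambda(s)^{a_s(n)} \prod_{r > s} \iota_\lambda(r)^{a_r(n)}$ is common to $v_\lambda(n)$, to $v_\lambda(n+1)/\iota_\lambda(s)$, and to every $v_\lambda(T_{s'}(n))$. Factoring it out of \eqref{Eq1_Transition Operator} reduces the eigenvalue equation to the purely scalar identity
\[
\lambda P_{s-1} \; = \; (1-p_1) P_{s-1} + \Bigl(\prod_{r=1}^{s} p_r\Bigr) \iota_\lambda(s) + \sum_{s'=1}^{s-1} \Bigl(\prod_{r=1}^{s'} p_r\Bigr)(1 - p_{s'+1}) \frac{P_{s-1}}{P_{s'}} ,
\]
where $P_s := \prod_{i=1}^s \iota_\lambda(i)^{d_i - 1}$ and $P_0 := 1$. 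I prove this by induction on $s$: the base $s = 1$ reads $p_1 \iota_\lambda(1) = \lambda - (1-p_1)$, which is the definition of $\iota_\lambda(1)$; the inductive step uses the key algebraic identity $\iota_\lambda(s-1)^{d_{s-1}} = \tilde{f}_{s-1}(\lambda) = p_s \iota_\lambda(s) + (1-p_s)$ (immediate from the definitions of $f_{s-1}$ and $h_s$), which lets the extra terms appearing at step $s$ telescope against those obtained by multiplying the step $s-1$ identity by $\iota_\lambda(s-1)^{d_{s-1}-1}$. This bookkeeping is the main obstacle, but once the common factor is extracted the verification is purely algebraic.

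For step (iii), any eigenvector $\bm{w}$ with eigenvalue $\lambda$ is determined by $w_0$: rewriting $(S_{\bar{d},\bar{p}} \bm{w})_n = \lambda w_n$ as
\[
\Bigl(\prod_{r=1}^{s_n} p_r\Bigr) w_{n+1} \; = \; (\lambda - 1 + p_1) w_n - \sum_{s=1}^{s_n - 1} \Bigl(\prod_{r=1}^{s} p_r\Bigr)(1 - p_{s+1}) w_{T_s(n)}
\]
expresses $w_{n+1}$ as a linear combination of values at indices $\le n$ (since $T_s(n) < n$), so strong induction on $n$ forces $\bm{w} = w_0 \cdot \bm{v}_\lambda$. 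Combining (ii) and (iii) with the third characterization from (i), the nonzero eigenvectors in $l^{\oo}(\Z_+)$ are exactly the nonzero scalar multiples of $\bm{v}_\lambda$ for $\lambda \in \mc{E}_{\bar{d}, \bar{p}}$; in particular $\sigma_{pt}(S_{\bar{d},\bar{p}}, l^{\oo}(\Z_+)) = \mc{E}_{\bar{d},\bar{p}}$.
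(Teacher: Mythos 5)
Your proof is correct and follows essentially the same route as the paper: establish the escape criterion via $\lav h_r(z) \rav \ge \lav z \rav$ when $\lav z \rav \ge 1$, derive the equivalent characterizations of $\mc{E}_{\bar{d},\bar{p}}$, verify that $\bm{v}_\lambda$ satisfies the eigenvalue equation by reducing it to the scalar identity (your identity is the paper's \eqref{Eq5_Theor_Recurrent Case Inductive Identity} divided by $\iota_\lambda(s)^{d_s-1}$), and conclude uniqueness from the triangular recursion in $w_{n+1}$. The two noteworthy differences are cosmetic improvements on your side: (a) you get the key bound $\lav h_r(z) \rav \ge \lav z \rav$ directly from the triangle inequality where the paper expands in real/imaginary coordinates; and (b) you supply a self-contained strong-induction argument for uniqueness and a clean induction for the scalar identity, whereas the paper outsources the uniqueness step to \cite[Lemma 3.6]{mv} and asserts the scalar identity ``follows directly'' from evaluating the eigenvalue equation at a single state.
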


	\begin{proof}[Proof of inclusions \eqref{Eq_Theor_Unconditional Point Spectrum Inclusion}]
		We begin by proving that the set $\bigcup_{r=1}^{+\oo} \tilde{f}_{r}^{-1}\lfp 1 \rfp$ is a subset of the point spectrum $\sigma_{pt} \lc \tilde {S}_{\bar{d}, \bar{p}}, \mc{C}\lc \overline{\Gamma_{\bar{d} }} \rc \rc $. 
		
		Fix $\lambda \in \bigcup_{r=1}^{+\oo} \tilde{f}_{r}^{-1}\lfp 1 \rfp$. Then, there exists $ r_{0} \ge 1$ such that $\tilde{f}_{r_{0}}\lc \lambda \rc=1$, which implies $\iota_{\lambda}\lc r_{0} \rc =1$. Consequently, for all $r \ge r_{0}$, one has $ \tilde{f}_{r}\lc \lambda \rc  =  1$ and $ \iota_{\lambda}\lc r \rc  =  1$. 
		
		Define a map $g_{\lambda}: \overline{\Gamma_{\bar{d}} } \to \C$ by 
		$$ g_{\lambda}\lc x \rc := \; \prod_{r=1}^{\infty} \lc \iota_{\lambda}\lc r \rc \rc^{a_{r}(x)} \; = \;  \prod_{r=1}^{ r_{0}} \lc \iota_{\lambda}\lc r \rc \rc ^{a_{r}\lc x \rc}$$
		for $ x= \lc a_{r}\lc x \rc \rc_{r=1}^{+\oo} \in \overline{\Gamma_{\bar{d}} }$. 
		It is straightforward to verify that $g_{\lambda} $ is continuous and extends the sequence $ \bm{v}_{\lambda}: \Gamma_{\bar{d}} \to \C$ to $\overline{\Gamma_{\bar{d}, \bar{p}} }$. Moreover, by Lemma \ref{Lem_Structure of Fibered Julia Set 1} and the definition of the operator $\tilde{S}_{\bar{d}, \bar{p}}$, it follows that $ \lc \tilde{S}_{\bar{d}, \bar{p}} - \lambda \tilde{I} \rc g_{\lambda} = 0$, where $\tilde{I}: \mc{C}\lc \overline{\Gamma_{\bar{d}}} \rc \to \mc{C}\lc \overline{\Gamma_{\bar{d}}} \rc$ is the identity map.  
		
		Hence, one concludes that $\bigcup_{r=1}^{\infty} \tilde{f}_{r}^{-1}\lfp 1 \rfp \sub \sigma_{pt} \lc \tilde{S}_{\bar{d}, \bar{p}},  \mc{C} \lc \overline{\Gamma_{\bar{d} }} \rc \rc $.
		
		\paragraph{} 
		
		We now turn to the second inclusion in \eqref{Eq_Theor_Unconditional Point Spectrum Inclusion}, namely, $\overline{\bigcup_{r=1}^{+\oo} \tilde{f}_{r}^{-1}\lfp 1 \rfp} \sub \partial \mc{E}_{\bar{d}, \bar{p}}$. Since the boundary $\partial \mc{E}_{\bar{d}, \bar{p}}$ is closed, it suffices to show that $ \bigcup_{r=1}^{\oo} \tilde{f}_{r}^{-1} \lfp 1 \rfp  \sub \partial \mc{E}_{\bar{d}, \bar{p}}$. 
		Let $z \in \bigcup_{r=1}^{\oo} \tilde{f}_{r}^{-1} \lfp 1 \rfp $, and let $r_{0} \ge 1$ be such that $ \tilde{f}_{r_{0}}\lc z \rc = 1$. Since $\tilde{f}_{r_{0}}$ is holomorphic and non-constant, it is an open map. Therefore, for every $\e \;> 0$, there exists $ z_{\e} \in \C$ with $\lav z - z_{\e} \rav \;< \e$ such that $ \lav \tilde{f}_{r_{0}} \lc z_{\e} \rc \rav \;> 1$.  By Lemma \ref{Lem_Structure of Fibered Julia Set 1}, it follows that $z_{\e}  \not\in \mc{E}_{\bar{d}, \bar{p}}$, while $z \in \mc{E}_{\bar{d}, \bar{p}}$, since $ \tilde{f}_{r}\lc z \rc = 1$ for all $r \ge r_{0}$. Thus, $z \in \partial \mc{E}_{\bar{d}, \bar{p}}$, and we conclude: $\bigcup_{r=1}^{+\oo} \tilde{f}_{r}^{-1}\lc \lfp 1 \rfp \rc  \sub \partial \mc{E}_{\bar{d}, \bar{p}}$.

		The proof is complete.  
	\end{proof}

	\paragraph{}
	To conclude the proof of Proposition \ref{Prop_Unconditional Main Result}, it remains to show that the spectrum $\sigma\lc \tilde{S}_{\bar{d}, \bar{p}}, \mc{C}\lc \overline{\Gamma_{\bar{d}, \bar{p} }} \rc \rc$ is contained within the fibered filled Julia set $ \mc{E}_{\bar{d}, \bar{p}}$. To proceed, we define the following shifted sequences of parameters:  
	\begin{equation}\label{Eq5_Forward Sequences of Probabilities and Integers}
		\bar{p}_{r} := \; \lc p_{r-1+j} \rc_{j=1}^\oo, \quad  \bar{d}_{r} := \; \lc d_{r-1+j} \rc_{j=1}^{\oo}, \quad \text{and define the space }\; \Gamma_{r} \; = \; \Gamma_{\bar{d}_{r}}. 
	\end{equation}
	Note that  $\bar{p}_1= \bar{p},\; \bar{d}_1= \bar{d}$ and $\Gamma_1 = \Gamma_{\bar{d}}$. Next, we define the operators  
	$$ S_{r} := \;  S_{\bar{d}_{r}, \bar{p}_{r}} \quad \text{and} \quad
	R_{r} := \;  \frac{S_{r} - \lc 1 - p_{r} \rc I_{r} }{p_{r}} $$
	for all integers $r \geq 1$, where $I_{r}: \Gamma_{r} \mapsto \Gamma_{r}$ is the identity operator.  
	
	Next, we define the corresponding operators 
	\begin{equation}\label{Eq_Inductive Operators}
		\tilde {S}_{r}:=  \; \tilde {S}_{\bar{d}_{r}, \bar{p}_{r}} \quad \text{and} \quad 
		\tilde{R}_{r} := \; \frac {\tilde {S}_{r} - \lc 1 - p_{r} \rc \tilde{I}_{r} } {p_{r}} 
	\end{equation}
	acting on $\mc{C} \lc \overline{\Gamma_{r}} \rc$, where $\tilde{I}_{r} : \mc{C}\lc \overline{\Gamma_{r}} \rc \to  \mc{C}\lc \overline{\Gamma_{r}} \rc$ denotes the identity map. 
	By definition of the operators $\tilde{R}_{r}$ and $\tilde{S}_{r}$ (cf. \eqref{Eq3_Transition Operator}),  we have the uniform bound $ || \tilde{R}_{r} || \leq 1$ for all $r \ge 1$.

	\paragraph{}
	
	The following lemma captures the core inductive mechanism used to prove the spectral inclusion \eqref{Eq_Theor_Unconditional Result}. Specifically, it relates the spectra of $\Tilde{R}_{r}^{d_{r}}$ and $\Tilde{S}_{r+1}$ through the polynomial maps $f_{r}$.

	\begin{lemma}\label{Lem1_Inductive_Spectra} 
		For all $r \geq 1$, the following equality holds: 
		$$ f_{r}\lc \sigma \lc \tilde{S}_{r}, \mc{C} \lc \overline{\Gamma_{r} } \rc \rc \rc  \; = \; \sigma \lc \tilde{R}_{r}^ {d_{r}}, \mc{C} \lc \overline{\Gamma_{r}} \rc \rc \; = \; \sigma \lc  \tilde{S}_{r+1},  \mc{C}\lc \overline{\Gamma_{r+1}} \rc \rc. $$	
	\end{lemma}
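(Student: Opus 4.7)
The first equality is immediate from the spectral mapping theorem for polynomials of bounded operators: by the definitions of $\tilde{R}_r$ and $f_r$, one has $\tilde{R}_r^{d_r} = f_r(\tilde{S}_r)$, whence
\[\sigma(\tilde{R}_r^{d_r},\mc{C}(\overline{\Gamma_r})) = \sigma(f_r(\tilde{S}_r),\mc{C}(\overline{\Gamma_r})) = f_r(\sigma(\tilde{S}_r,\mc{C}(\overline{\Gamma_r}))).\]

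For the second equality, the plan is to realize $\tilde{R}_r^{d_r}$ as a block-diagonal operator with $d_r$ identical copies of $\tilde{S}_{r+1}$ on the diagonal, acting on a canonical direct sum decomposition of $\mc{C}(\overline{\Gamma_r})$. The starting observation is that $\overline{\Gamma_r}$ is the disjoint union of the $d_r$ clopen cylinders $\{a\} \times \overline{\Gamma_{r+1}}$, indexed by the first digit $a \in \{0,\dots,d_r-1\}$. This decomposition induces an isometric Banach space isomorphism
\[\mc{C}(\overline{\Gamma_r}) \; \cong \; \bigoplus_{a=0}^{d_r-1}\mc{C}(\overline{\Gamma_{r+1}})\]
(with sup norm on the right), under which $g\in\mc{C}(\overline{\Gamma_r})$ corresponds to the tuple $(g_0,\ldots,g_{d_r-1})$ defined by $g_a(y):=g((a,y))$.

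The main computation is the identity
\[(\tilde{R}_r^{d_r} g)((a,y)) \; = \; (\tilde{S}_{r+1} g_a)(y) \qquad \text{for all } a\in\{0,\dots,d_r-1\} \text{ and } y\in\overline{\Gamma_{r+1}}.\]
I would establish this by unpacking formula \eqref{Eq3_Transition Operator} applied to $\tilde{R}_r$ and distinguishing two cases. When the first digit of a state is strictly less than $d_r-1$, the counter equals $1$ and $\tilde{R}_r$ acts as the deterministic shift $g\mapsto g\circ V$, simply incrementing the first digit by one. When the first digit equals $d_r-1$, the counter is $1+s_y^{(r+1)}$ and $\tilde{R}_r g((d_r-1,y))$ reduces to a linear combination of values of $g$ at points with first digit $0$, whose coefficients, after the reindexing $s\mapsto s-1$, exactly match those of $\tilde{S}_{r+1}$ applied to the slice $y\mapsto g((0,y))$. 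Tracing the orbit of $(a,y)$ under $d_r$ successive applications of $\tilde{R}_r$ gives $d_r-1-a$ deterministic increments, then one stochastic step at $(d_r-1,y)$, then $a$ more deterministic increments; the net effect is that the final expression evaluates $g$ at points of the form $(a,\cdot)$ with the $\tilde{S}_{r+1}$-coefficient structure applied to the slice $g_a$.

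Given this identity, $\tilde{R}_r^{d_r}$ corresponds under the direct sum decomposition to the operator $\tilde{S}_{r+1}^{\oplus d_r}$. Since the spectrum of a finite block-diagonal operator is the union of the spectra of its blocks, we conclude $\sigma(\tilde{R}_r^{d_r},\mc{C}(\overline{\Gamma_r})) = \sigma(\tilde{S}_{r+1},\mc{C}(\overline{\Gamma_{r+1}}))$, which combined with the first equality yields the lemma. The main obstacle is the bookkeeping in the orbit computation: one has to match the indices and weights produced by applying $\tilde{R}_r$ a total of $d_r$ times against the single application of $\tilde{S}_{r+1}$, and verify that the intermediate deterministic shifts do not alter the stochastic coefficients but merely relabel the first digit of the evaluation points.
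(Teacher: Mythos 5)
Your proposal is correct and follows essentially the same route as the paper: the direct-sum decomposition $\mc{C}\big(\overline{\Gamma_r}\big) \cong \bigoplus_{a=0}^{d_r-1}\mc{C}\big(\overline{\Gamma_{r+1}}\big)$ you introduce via the first digit is exactly what the paper encodes through the auxiliary operators $\tilde{\Pi}_{k,r}$ and $\tilde{F}_{k,r}$, and your key identity $\big(\tilde{R}_r^{d_r}g\big)(a,y) = \big(\tilde{S}_{r+1}g_a\big)(y)$ is the paper's formula $\tilde{R}_r^{d_r} = \sum_{k=0}^{d_r-1}\tilde{\Pi}_{k,r}\circ\tilde{S}_{r+1}\circ\tilde{F}_{k,r}$ read in coordinates. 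The only material difference is cosmetic at the end: you invoke the spectral-union fact for finite block-diagonal operators, whereas the paper verifies by hand that injectivity and surjectivity of $\tilde{R}_r^{d_r}-\lambda\tilde{I}_r$ and of $\tilde{S}_{r+1}-\lambda\tilde{I}_{r+1}$ are equivalent.
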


	\begin{proof}[Proof of the Unconditional Relation \ref{Eq_Theor_Unconditional Result}] 
		
		We prove the inclusion $ \sigma\lc \tilde{S}_{\bar{d}, \bar{p}}, \mc{C}\lc \overline{\Gamma_{\bar{d} }} \rc  \rc \sub \mc{E}_{\bar{d}, \bar{p}} $ by iteratively applying Lemma \ref{Lem1_Inductive_Spectra}. 
		
		Fix $r \ge 1$. Since $\sigma \lc \tilde{S}_{\bar{d}, \bar{p}} , \mc{C} \lc \overline{\Gamma_{\bar{d} }} \rc \rc =  \sigma \lc \tilde{S}_{1}, \mc{C}\lc \overline{\Gamma_{1}} \rc \rc$, one obtains by repeated application of the lemma: 
		$$
		\tilde{f}_{r} \lc \sigma\lc \tilde{S}_{\bar{d}, \bar{p}},  \mc{C} \lc \overline{\Gamma_{\bar{d}, \bar{p}}} \rc \rc \rc \; = \; \sigma\lc \tilde {S}_{r+1},  \mc{C} \lc \overline{\Gamma_{r+1}} \rc \rc \, .
		$$
		Since $|| \tilde {S}_{r+1}  || \leq 1$, the spectrum of $\tilde{S}_{r+1}$ lies within the closed unit disk $\overline{ B\lc 0, 1 \rc }$ centered at zero. 
		Hence, for all $\lambda \in \sigma \lc \tilde{S}_{\bar{d}, \bar{p}}, \mc{C} \lc \overline{\Gamma_{\bar{d} }} \rc \rc$, one has $ \lav  \tilde{f}_{r} \lc  \lambda \rc \rav  \le 1 $. 
		
		Since the choice of $r \ge 1 $ was arbitrary, one concludes that $ \sigma\lc \tilde{S}_{\bar{d}, \bar{p}}, \mc{C}\lc \overline{\Gamma_{\bar{d} }} \rc  \rc \sub \mc{E}_{\bar{d}, \bar{p}} $ and the proof is complete. 
		
	\end{proof}
	
	\medskip
	
	\begin{remark}
		In the argument above, we explicitly established that 
		$$ \sigma\lc \Tilde{S}_{\bar{d}, \bar{p}}, \mc{C}\lc \overline{\Gamma_{\bar{d}}} \rc \rc \sub \lfp \lambda \in \C : \; \lav \tilde{f}_{r}\lc \lambda \rc \rav \le 1,  \; \text{for all } r \ge 1 \rfp .$$
		However, Lemma \ref{Lem_Structure of Fibered Julia Set 1} shows that this set is precisely the fibered Julia set $ \mc{E}_{\bar{d}, \bar{p}}$. 
	\end{remark}

	\subsection{Proof of Theorem \ref{Theor_Conditional Main Result} }\label{Section_Proof Conditional Results}

	In this section, we conclude the proof of Theorem \ref{Theor_Conditional Main Result} by establishing relations \eqref{Eq_Theor Conditional Spectrum Boundary}, \eqref{Eq_Theor Conditional Point Spectrum}, and \eqref{Eq__Theor_Conditional Main Result}, in that order. 
	Throughout this section, we assume that the sequence of positive integers $\bar{d}$ is bounded.

	\subsubsection{Proof of Relations \eqref{Eq_Theor Conditional Point Spectrum} and \eqref{Eq_Theor Conditional Spectrum Boundary}}
	
	Throughout this section, the sequence of positive integers $\bar{d}$ is assumed to be bounded. 
	
	\paragraph{}
	
	The proof of equalities \eqref{Eq_Theor Conditional Point Spectrum} and \eqref{Eq_Theor Conditional Spectrum Boundary} makes use of the theory of normal families of analytic functions. Let $\mc{G} = \lfp g_{r}:  \; r \geq 1 \rfp$ be a family of analytic functions $g_{r}: W \to \mathbb{C}$, defined on an open set $W \sub \C$. 
	The family $ \mc{G} $ is said to be \textit{normal at $z \in W$} if there exists an open set $U \sub W$ such that every sequence in $ \mc{G} $ has a subsequence which converges uniformly on $U$ either to a bounded analytic function or to infinity (see for instance \cite[Chapter 9, p. 340]{Markushevich-Theory_of_Functions_of_Complex_Variable_III}). 
	A sequence $\lc g_{r}: U \to \C \rc_{r \ge 1}$ \textit{converges uniformly to infinity} if, for every compact subset $K \sub U$ and every $M \;>0 $, there exists $R_{K, M} \ge 1$ such that $\lav g_{r}\lc z \rc \rav \ge M $ for all $z \in K$ and all $r \ge R_{K, M}$.

	\smallskip 
	We now state a lemma, proved in the Annex, that plays a central role in the proof. 
	
	\begin{lemma}\label{Lem5_Boundary and Non-Normal Points}
		Let $\bar{d}= \lc d_{r} \rc_{r \ge 1}$ be a sequence of positive integers and $\bar{p} = \lc p_{r} \rc_{r \ge 1}$ be a sequence of probabilities. Then: 
		\begin{enumerate}
			\item A complex number $z \in \C$ lies in $ \C \backslash \partial \mc{E}_{\bar{d}, \bar{p}}$ if and only if the family $\lfp \tilde{f}_{r}: \; r \ge 1 \rfp$ is normal at $z \in \C$. That is, $\partial \mc{E}_{\bar{d},\bar{p}} \; = \; \lfp z \in \mathbb{C}:  \lfp \tilde{f}_{r} : \; r \geq 1 \rfp \; \text{ is not normal at }  z \rfp$. 
			
			\item If the sequence of integers $\bar{d}$ is bounded and $U \sub \mathbb{C}$ is an open set such that $ U \cap\partial \mc{E}_{\bar{d},\bar{p}} \neq \emptyset$, then for every $w \in \C \backslash \lfp 0 \rfp$, there exists $r \ge 1$ such that $\tilde{f}_{r}^{-1}\lfp w \rfp \cap U \neq \emptyset $. 
			
		\end{enumerate}
	\end{lemma}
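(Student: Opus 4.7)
The plan is to handle the two parts separately using Montel's theorem together with the characterization of $\mc{E}_{\bar{d},\bar{p}}$ from Lemma~\ref{Lem_Structure of Fibered Julia Set 1}. For Part~1, I would first note that $\mc{E}_{\bar{d},\bar{p}}=\bigcap_{r\geq 1}\tilde{f}_{r-1}^{-1}(\overline{B(0,1)})$ is closed, so $\mathbb{C}\setminus\partial\mc{E}_{\bar{d},\bar{p}} = \mathrm{int}(\mc{E}_{\bar{d},\bar{p}})\cup(\mathbb{C}\setminus\mc{E}_{\bar{d},\bar{p}})$. On $\mathrm{int}(\mc{E}_{\bar{d},\bar{p}})$, the family $\{\tilde{f}_r\}$ is uniformly bounded by $1$ on an open neighborhood, so Montel's theorem gives normality. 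On $\mathbb{C}\setminus\mc{E}_{\bar{d},\bar{p}}$, the divergence equivalence~\eqref{Eq_Divergent Sequences} and continuity of $\tilde{f}_{r_0}$ yield an open $V\ni z$ on which $|\tilde{f}_{r_0}(\cdot)|\geq\rho>1$; the bound $|f_r(w)|\geq (|w|-1)^{d_r}$ (from $p_r\leq 1$) combined with Montel applied to the bounded reciprocal family $\{1/\tilde{f}_r\}$ then forces $|\tilde{f}_r|\to\infty$ uniformly on compacts of $V$, yielding normality. Conversely, if $z\in\partial\mc{E}_{\bar{d},\bar{p}}$, every neighborhood contains a point $z_1\in\mc{E}_{\bar{d},\bar{p}}$ with $|\tilde{f}_r(z_1)|\leq 1$ and a point $z_2\notin\mc{E}_{\bar{d},\bar{p}}$ with $|\tilde{f}_r(z_2)|\to\infty$; any subsequential uniform limit on compacts (either a bounded holomorphic function or $\infty$) would contradict the opposite behavior at one of these points, so normality fails.

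For Part~2, I would argue by contradiction. Assume some $w_0\in\mathbb{C}\setminus\{0\}$ satisfies $\tilde{f}_r^{-1}\{w_0\}\cap U=\emptyset$ for every $r\geq 1$; then the family $\{\tilde{f}_r|_U\}$, viewed as meromorphic, already omits $w_0$ and $\infty$. Since $U\cap\partial\mc{E}_{\bar{d},\bar{p}}\neq\emptyset$, Part~1 gives non-normality on $U$, so by Montel's theorem no third common omitted value is admissible. The task is to produce one, thereby reaching a contradiction. The starting observation is the backward identity $\tilde{f}_{r+1}^{-1}(w_0)=\tilde{f}_r^{-1}(f_{r+1}^{-1}(w_0))$, which forces every element of $f_{r+1}^{-1}\{w_0\}=\{1-p_{r+1}+p_{r+1}w_0^{1/d_{r+1}}\zeta : \zeta^{d_{r+1}}=1\}$ to lie in $\mathbb{C}\setminus\tilde{f}_r(U)$. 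Because $w_0\neq 0$, these are $d_{r+1}$ distinct points of uniformly bounded modulus; boundedness of $\bar{d}$ then enables a diagonal extraction along $r_k\to\infty$ stabilizing $d_{r_k+1+j}$ for all $j\geq 0$ and making $p_{r_k+1+j}$ converge, under which a limit point $v^*$ of the preimage sets is produced. Transporting the omission property through this limit---via closedness of the sets $\mathbb{C}\setminus\tilde{f}_s(U)$ and the stabilization of the shifted compositions---should identify $v^*$ as a common omitted value of the family, contradicting the two-omitted-values bound.

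The main obstacle will be the final limiting step in Part~2: rigorously passing from per-level omission of each $f_{r+1}^{-1}\{w_0\}$ to a genuine common omitted value for the entire family. The bounded-$\bar{d}$ hypothesis is essential here because it keeps the preimage sets $f_r^{-1}\{w_0\}$ discrete and uniformly controlled, enabling the diagonal extraction; without it, the preimages would spread over circles as $d_r\to\infty$ and the limiting "omitted" candidate would be a continuum rather than a discrete point that can be handled by Montel. Once this third omitted value is secured, Montel's theorem immediately yields normality on a neighborhood of any $z^*\in U\cap\partial\mc{E}_{\bar{d},\bar{p}}$, contradicting Part~1 and completing the proof.
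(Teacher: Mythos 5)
Your Part~1 follows essentially the same route as the paper: decompose $\mathbb{C}\setminus\partial\mc{E}_{\bar{d},\bar{p}}$ into interior and exterior, use uniform boundedness plus Montel on the interior, use the divergence equivalence~\eqref{Eq_Divergent Sequences} on the exterior, and obtain non-normality at boundary points by finding nearby bounded and divergent orbits. The extra technical care you add (uniform divergence to $\infty$ via $|h_r(z)|\ge|z|$ for $|z|>1$) is correct and fills in a step the paper leaves implicit; this part is fine.

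For Part~2 you take a genuinely different route, and that route has a gap you have already partly flagged yourself. The plan is to locate a \emph{third} value omitted by the whole family $\{\tilde{f}_r|_U\}$. You observe that omission of $w_0$ by $\tilde f_{r+1}(U)$ forces $f_{r+1}^{-1}\{w_0\}\subset\mathbb{C}\setminus\tilde f_r(U)$, and you then try to pass to a limit point $v^*$ of these sets along a diagonal subsequence and promote $v^*$ to a \emph{common} omitted value. But this promotion does not follow. Each set $f_{r+1}^{-1}\{w_0\}$ is omitted only by the \emph{single} map $\tilde f_r$, with the index $r$ changing as you pass to the limit; closedness of each individual $\mathbb{C}\setminus\tilde f_s(U)$ (for fixed $s$) gives you nothing about the intersection $\bigcap_s(\mathbb{C}\setminus\tilde f_s(U))$. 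There is no mechanism by which a limit of $r$-dependent omitted points becomes omitted by \emph{every} $\tilde f_s$, and the ``stabilization of the shifted compositions'' you invoke does not supply one (the $p_r$ need not converge at all in the null-recurrent case, and even if they did, the limit point would still only be an asymptotic, not simultaneous, omitted value). So as written this is a genuine gap, not a routine detail to fill.

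The paper avoids the problem entirely by changing the family before applying Montel, rather than by hunting for a third omitted value of the original family. Using boundedness of $\bar d$ it extracts a subsequence $(r_k)$ with $d_{r_k}=d\ge 2$ constant, and observes via~\eqref{Eq_Divergent Sequences} that normality of $\{\tilde f_r\}$ at a point is equivalent to normality of $\{\tilde f_{r_k}\}$ there, hence of $\{h_{r_k}\circ\tilde f_{r_k-1}\}$ since $\tilde f_{r_k}=\bigl(h_{r_k}\circ\tilde f_{r_k-1}\bigr)^d$. If $w\ne 0$ were omitted by every $\tilde f_{r_k}(U)$, then \emph{all} $d$ distinct $d$-th roots of $w$ would be omitted by every $h_{r_k}\circ\tilde f_{r_k-1}(U)$, so that auxiliary family (omitting $d\ge 2$ points) would be normal on $U$ by Montel--Carath\'eodory, contradicting non-normality at $z_0\in U\cap\partial\mc{E}_{\bar d,\bar p}$ from Part~1. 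This is where the bounded-$\bar d$ hypothesis actually gets used: not to control a limiting omitted value, but to produce a constant-degree subsequence so that the $d$-th-root family makes sense and yields at least two omitted values. If you want to salvage your write-up of Part~2, replacing the limiting argument with this root-family device is the natural fix.
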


	\paragraph{}
	
	\begin{proof}[Proof of \eqref{Eq_Theor Conditional Point Spectrum} and \eqref{Eq_Theor Conditional Spectrum Boundary}] 
		
		In what follows, assume that the sequence $\bar{d}$ is bounded.

		We begin by proving relation \eqref{Eq_Theor Conditional Spectrum Boundary}, namely: $ \overline{\bigcup_{r=1}^{\infty} \tilde{f}_{r}^{-1}\lfp 1 \rfp} = \partial \mc{E}_{\bar{d}, \bar{p} }$. It suffices to prove the inclusion $ \partial \mc{E}_{\bar{d}, \bar{p}} \sub \overline{\bigcup_{r=1}^{\oo} \tilde{f}_{r}^{-1} \lfp 1 \rfp }$ since the reverse inclusion holds unconditionally by Proposition \ref{Prop_Unconditional Main Result}.  
		Fix a point $z \in \partial \mc{E}_{\bar{d},\bar{p}}$, and let $U_{z} \sub \mathbb{C}$ be an open neighborhood of $z$. By Lemma \ref{Lem5_Boundary and Non-Normal Points} (2), the open set $U_{z}$ must intersect the set $\bigcup_{n=1}^{\infty} \tilde{f}_{r}^{-1}\lfp 1 \rfp$, that is, $  U_{z} \cap \lc \bigcup_{r=1}^{+\oo} \tilde{f}_{r}^{-1}\lfp 1 \rfp \rc  \neq \emptyset$. 
		Since the choice of the neighborhood $U_{z}$ was arbitrary, one concludes that $z \in \overline{ \bigcup_{r=1}^{+\oo} \tilde{f}_{r}^{-1}\lfp 1 \rfp }$, as desired.

		\paragraph{}
		Now, we turn to relation \eqref{Eq_Theor Conditional Point Spectrum}, which asserts that $\sigma_{pt} \lc \tilde{S}_{\bar{d}, \bar{p}}, C\lc \overline{\Gamma_{\bar{d} }} \rc \rc = \bigcup_{r=1}^{\infty} \tilde{f}_{r}^{-1}\lfp 1 \rfp$. 
		To prove this, it is enough to show the inclusion $\sigma_{pt} \lc \tilde{S}_{\bar{d}, \bar{p}}, C\lc \overline{\Gamma_{\bar{d} }} \rc \rc \sub \bigcup_{r=1}^{\infty} \tilde{f}_{r}^{-1}\lfp 1 \rfp$ since the inverse inclusion follows from Proposition \ref{Prop_Unconditional Main Result}.  
		
		Let $\lambda \in \sigma_{pt}\lc \tilde {S}_{\bar{d},\bar{p}}, \mc{C}\lc \overline{\Gamma_{\bar{d} }} \rc \rc$, and let $g_{\lambda} \in \mc{C}\lc \overline{\Gamma_{\bar{d} }} \rc$ be a non-zero eigenvector associated with $\lambda$. 
		By Lemma \ref{Lem_Structure of Fibered Julia Set 1} and relation \eqref{Eq3_Projection of the Stochastic Operator}, one has 
		$$ g_{\lambda}\lc n \rc \; = \; c \cdot  v_{\lambda}(n) \quad \text{for all } n \in \Gamma_{\bar{d}},$$ 
		where $c= g\lc 0 \rc \neq 0$. 
		Without loss of generality, we may normalize and take $c =1$, so that $g\lc n \rc = v_{\lambda}\lc n \rc $ for all $n \in \Gamma_{\bar{d}}$.

		Since the sequence $\lc q_{r-1} \rc_{r \ge 1}$ converges to $0$ in $\overline{\Gamma_{\bar{d} }}$ as $r \to +\oo$, and $g_{\lambda}\lc q_{r-1} \rc  =   \iota_{\lambda}\lc r \rc$ for each $r \ge 1$, one deduces that
		$$\underset{r \to +\oo}{\lim} \iota_{\lambda}\lc r \rc \; = \; \lim _{r \to \infty} g_{\lambda}\lc q_{r-1} \rc \; = \; g_{\lambda}\lc 0 \rc \; = \;  v_{\lambda}\lc 0 \rc \; = \; 1 .$$

		We now invoke the following claim, which will complete the argument. 
		
		\begin{claim}\label{Claim_Structure of Fibered Julia Set 3} 
			Suppose that the sequence $\bar{d} = \lc d_{r} \rc_{r \ge 1} $ is bounded. If $ \underset{r \to +\oo}{\lim} \iota_{\lambda}\lc r \rc = 1$, then there exists $ r_{0} \ge 1$ such that $ \iota_{\lambda}\lc r \rc = 1 $ for every $r \ge r_{0}$.   
		\end{claim}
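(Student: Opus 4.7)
The plan is to exploit a simple recurrence satisfied by $\lc \iota_{\lambda}\lc r \rc \rc_{r \ge 1}$ and then use the boundedness of $\bar{d}$ to force geometric amplification of $\iota_{\lambda}\lc r \rc - 1$ unless it vanishes. First, since $\tilde{f}_{r}\lc \lambda \rc = f_{r}\lc \tilde{f}_{r-1}\lc \lambda \rc \rc = h_{r}\lc \tilde{f}_{r-1}\lc \lambda \rc \rc^{d_{r}} = \iota_{\lambda}\lc r \rc^{d_{r}}$ by \eqref{Eq1_Julia Polynomials}, \eqref{ps2} and \eqref{ps3}, applying $h_{r+1}$ to $\tilde{f}_{r}\lc \lambda \rc$ yields the clean one-step relation
\begin{equation*}
    \iota_{\lambda}\lc r+1 \rc - 1 \; = \; \frac{\iota_{\lambda}\lc r \rc^{d_{r}} - 1}{p_{r+1}} .
\end{equation*}
Setting $\e_{r} := \iota_{\lambda}\lc r \rc - 1$, the hypothesis gives $\e_{r} \to 0$, and the recurrence rewrites as $\e_{r+1} = \lc \lc 1 + \e_{r} \rc^{d_{r}} - 1 \rc / p_{r+1}$.

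Next, I would expand by the binomial formula to write $\lc 1 + \e_{r} \rc^{d_{r}} - 1 = d_{r} \e_{r} \lc 1 + \h_{r} \rc$, where $\h_{r} = \frac{1}{d_{r}} \sum_{k=2}^{d_{r}} \binom{d_{r}}{k} \e_{r}^{k-1}$. Since $\bar{d}$ is bounded, say $d_{r} \le D$ for every $r \ge 1$, the binomial coefficients $\binom{d_{r}}{k}/d_{r}$ that appear in $\h_{r}$ are bounded uniformly in $r$, hence $\h_{r} \to 0$ as $\e_{r} \to 0$. Combined with $d_{r} \ge 2$ and $p_{r+1} \le 1$, I obtain
\begin{equation*}
    \lav \e_{r+1} \rav \; = \; \frac{d_{r} \, \lav \e_{r} \rav \, \lav 1 + \h_{r} \rav }{p_{r+1}} \; \ge \; \frac{3}{2} \, \lav \e_{r} \rav
\end{equation*}
for every $r$ sufficiently large, say once $\lav \h_{r} \rav \le 1/4$.

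Finally, the eventual geometric lower bound $\lav \e_{r+1} \rav \ge (3/2) \lav \e_{r} \rav$ is incompatible with $\e_{r} \to 0$ unless $\e_{r_{0}} = 0$ for some $r_{0} \ge 1$. Once $\iota_{\lambda}\lc r_{0} \rc = 1$, the identity $\tilde{f}_{r_{0}}\lc \lambda \rc = \iota_{\lambda}\lc r_{0} \rc^{d_{r_{0}}} = 1$ gives $\iota_{\lambda}\lc r_{0} + 1 \rc = h_{r_{0}+1}\lc 1 \rc = 1$, and an immediate induction yields $\iota_{\lambda}\lc r \rc = 1$ for every $r \ge r_{0}$, which is the conclusion.

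The main obstacle will be the uniform control of the remainder $\h_{r}$: this is precisely the step where the hypothesis that $\bar{d}$ is bounded cannot be removed, since otherwise the higher-order terms in the binomial expansion of $\lc 1 + \e_{r} \rc^{d_{r}}$ need not be negligible as $\e_{r} \to 0$, and the geometric amplification of the perturbation could fail.
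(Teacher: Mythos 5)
Your proposal is correct and takes essentially the same approach as the paper. Your binomial factorization $\iota_{\lambda}\lc r\rc^{d_{r}}-1 = d_{r}\,\e_{r}\lc 1 + \h_{r}\rc$ is algebraically identical to the paper's geometric-sum factor $z_{r+1} = 1 + \iota_{\lambda}\lc r\rc + \cdots + \iota_{\lambda}\lc r\rc^{d_{r}-1}$ (both equal $\lc\iota_{\lambda}\lc r\rc^{d_{r}}-1\rc/\lc\iota_{\lambda}\lc r\rc-1\rc$ and tend to $d_{r}$ as $\e_{r}\to 0$, with boundedness of $\bar{d}$ giving the uniform control in either form), and the resulting geometric amplification of $\lav\e_{r}\rav$ contradicting $\e_{r}\to 0$ is the paper's argument, merely phrased one step at a time instead of compounded over $k$ steps.
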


		Assuming the claim, it follows that $\iota_{\lambda}\lc r_{0} \rc = 1$ for some $r_{0} \ge 1$, and hence $\tilde{f}_{r_{0}}\lc \lambda \rc = 1$. 
		Therefore, $\lambda \in \bigcup_{r=1}^{\infty} \tilde{f}_{r}^{-1}\lfp 1 \rfp$, proving the desired inclusion. 
		
		\paragraph{}
		It remains to prove Claim \ref{Claim_Structure of Fibered Julia Set 3}. Fix $\lambda \in  \C $ such that $\underset{r \to +\oo}{\lim} \iota_{\lambda}\lc r \rc =1 $. 
		Assume for contradiction that $\iota_{\lambda}\lc r \rc \neq 1$ for all $ r \ge 1$. Using the recurrence relation 
		$$ \iota_{\lambda}\lc r \rc \; = \; {1\over p_{r}} \iota_{\lambda}\lc r-1 \rc^{d_{r-1}}  - {1 - p_{r} \over p_{r}} , $$
		we can express the difference $\iota_{\lambda}\lc r \rc -1 $ in terms of $\iota_{\lambda}\lc r- k \rc  -1 $ as: 
		\begin{eqnarray}\label{ghbv}
			\iota_{\lambda}\lc r \rc -1 \; = \; \lc \iota_{\lambda}\lc r - k \rc -1\rc \cdot { z_{r-k +1} z_{r-k +2} \ldots z_{r} \over p_{r-k+1} p_{r-k +2} \ldots p_{r} }, 
		\end{eqnarray}
		for all $ 1 \leq k \leq  r-1$, where 
		$$z_{j} := \; 1+ \iota_{\lambda}(j-1)+ \cdots +\iota_{\lambda}(j-1)^{d_{j-1}-1}, \quad \text{for each } \; 2 \leq j \leq r.$$ 
		Since $\bar{d} $ is bounded and $\underset{r \to +\oo}{\lim} \iota_{\lambda} \lc r \rc = 1$, it follows by relation \eqref{ghbv}  that the sequence $ \lc \iota_{\lambda} \lc r \rc  \rc_{r \geq 1}$ is unbounded, which is a contradiction. 
		Therefore, there exists $r_{0} \ge 1$ such that $\iota_{\lambda}\lc r_{0} \rc=1$. This proves the claim and the proof of \eqref{Eq_Theor Conditional Point Spectrum} is complete. 
		
	\end{proof}

	\subsubsection{Proof of Relation \eqref{Eq__Theor_Conditional Main Result}.}
	
	We now prove the spectral equality stated in \eqref{Eq__Theor_Conditional Main Result}. 
	We begin with the case where $\prod_{r=1}^{+\oo} p_{r} = 0$, i.e., the null recurrent case. 
	Our aim is to establish the inclusion $\mc{E}_{\bar{d}, \bar{p}} \sub \sigma \lc \tilde{S}_{\bar{d}, \bar{p}}, \mc{C} \lc \overline{\Gamma_{\bar{d} }} \rc \rc $, since the reverse inclusion is already known to hold unconditionally, as shown in Proposition \ref{Prop_Unconditional Main Result}.  
	In fact, we prove a stronger result than that stated in Theorem \ref{Theor_Conditional Main Result}: the equality 
	$ \sigma \lc \tilde{S}_{\bar{d}, \bar{p}}, \mc{C} \lc \overline{\Gamma_{\bar{d} }} \rc \rc  = \sigma_{ap} \lc \tilde{S}_{\bar{d}, \bar{p}}, \mc{C} \lc \overline{\Gamma_{\bar{d} }} \rc \rc = \mc{E}_{\bar{d}, \bar{p}}$ holds for every sequence $\bar{d}$, without requiring it to be bounded. 
	
	\begin{proof}[Proof of \eqref{Eq__Theor_Conditional Main Result} (Case $\prod_{r=1}^{+\oo} p_{r} = 0$)]
		
		Assume that $\prod_{r=1}^{+\infty} p_{r} =0 $.

		Fix $\lambda \in \mc{E}_{\bar{d}, \bar{p}} $. Our objective is to show that $\lambda \in \sigma_{ap}\lc \tilde{S}_{\bar{d}, \bar{p}}, \overline{\Gamma_{\bar{d} }}\rc$. 
		To this end, for each $t \ge 1$, define the continuous map  $g_{\lambda, t}: \overline{\Gamma_{\bar{d}}} \to \mathbb{C}$ by 
		\begin{equation}\label{gfvcv}
			g_{\lambda, t}\lc x \rc : =  \;  \prod_{r=1}^{t} \lc \iota_{\lambda}\lc r \rc \rc^{a_{r}(x)} . 
		\end{equation}
		We distinguish two cases, depending on whether the pointwise limit $\underset{t \to +\oo}{\lim} g_{\lambda, t}\lc x \rc$ defines a function in $\mc{C}\lc \overline{\Gamma_{\bar{d} }} \rc$. 
		In the case where the pointwise limit of the sequence of $\lc g_{\lambda, t} \rc_{t \ge 1}$ exists, it is given by 
		\begin{equation*}
			g_{\lambda}\lc x \rc  \; := \; \lim_{t \to +\oo} g_{\lambda,t}\lc x \rc \; = \;  \prod_{r=1}^{+\oo} \lc \iota_{\lambda}\lc r \rc \rc^{a_{r}(x)}  . 
		\end{equation*}
		Specifically, we show that if this limit defines a continuous function, then $\lambda$ is an eigenvalue; otherwise, $\lambda$ belongs to the approximate spectrum. According to relations \eqref{Eq_Theor Conditional Point Spectrum} and \eqref{Eq_Theor Conditional Spectrum Boundary}, both scenarios can occur: $\lambda$ may correspond either to an eigenvalue or to a point in the approximate spectrum, depending on its position in $\mc{E}_{\bar{d}, \bar{p}}$.

		\paragraph{Case 1:} Assume either that the pointwise limit of the sequence $ \lc  g_{\lambda, t} \rc_{t \geq 1}$ does not exist, or that, if it does exist, it is not a continuous function---that is, $g_{\lambda} \notin \mc{C}\lc \overline{\Gamma_{\bar{d}}} \rc$. In this case, it suffices to show that  
		$$ \ldav \lc \tilde{S}_{\bar{d}, \bar{p}} - \lambda I \rc g_{\lambda, t} \rdav_{\oo} \to 0 \quad \text{as} \quad t \to +\oo ,$$  
		since $\ldav g_{\lambda, t} \rdav_{\oo} \ge 1$ for all $ t \ge 1$ (indeed, $g_{\lambda, t}\lc 0 \rc = 1$). To this end, we make use of the following identity, valid for all $s_{0} \ge 1$:  
		\begin{multline}\label{Eq5_Theor_Recurrent Case Inductive Identity}
			\lambda \cdot \prod_{r=1}^{s_{0}} \lc \iota_{\lambda}\lc r \rc \rc^{d_{r} -1 } \; = \; \lc 1 - p_{1} \rc \prod_{r=1}^{s_{0}} \lc \iota_{\lambda}\lc r \rc \rc^{d_{r} - 1} + \lc \prod_{r=1}^{s_{0} } p_{r} \rc \lc \iota_{\lambda}\lc s_{0}  \rc \rc^{d_{s_{0} } } \\
			+ \sum_{s=1}^{s_{0}  -1}\lc \lc \prod_{r=1}^{s} p_{r} \rc \rc \lc 1 - p_{s+1} \rc \prod_{r= s+1}^{s_{0} } \lc \iota_{\lambda}\lc r \rc \rc^{d_{r}-1}. 
		\end{multline} 
		This identity follows directly from equations \eqref{Eq1_Transition Operator} and \eqref{ps1}, along with the identity 
		$\lambda\, v_{\lambda}\lc q_{s_{0} - 1} \rc = \lc S_{\bar{d}, \bar{p}}\, \bm{v}_{\lambda}\rc \lc q_{s_{0} - 1}  \rc$.

		Let $ x\in \overline{\Gamma_{\bar{d} }}$ be fixed. First, consider the case where the counter of $x$ satisfies $1 \le s_{x} \le t$. In this case, one has that $\lc \tilde{S}_{\bar{d}, \bar{p}} - \lambda I \rc g_{\lambda, t} \lc x \rc = 0$. Explicitly, we can write: 
		\begin{align*}
			\lc \tilde{S}_{\bar{d}, \bar{p}} \, g_{\lambda, t} \rc \lc x \rc \; & = \; \lc 1 - p_{1} \rc g_{\lambda, t} \lc x \rc + \lc \prod_{r=1}^{s_{x}} p_{r} \rc g_{\lambda, t} \lc x +1 \rc  + \sum_{s = 1}^{s_{x} -1} \lc \prod_{r=1}^{s} \lc p_{r} \rc \lc 1 - p_{s + 1} \rc  g_{\lambda, t} \lc T_{s}\lc x \rc \rc \rc . \\ 
		\end{align*}
		This right-hand side simplifies to 
		\begin{align*}   
			& = \; \lc 1 - p_{1} \rc \prod_{r=1}^{s_{x}} \lc \iota_{\lambda} \lc r \rc \rc^{a_{r}\lc x \rc } \cdot \prod_{r= s_{x} + 1}^{t}  \lc \iota_{\lambda} \lc r \rc \rc^{a_{r}\lc x \rc }  + \lc \prod_{r=1}^{s_{x}} p_{r} \rc   \lc \iota_{\lambda} \lc s_{x} \rc \rc^{a_{s_{x}}\lc x  \rc  +1 }  \cdot \prod_{r= s_{x} + 1}^{t}  \lc \iota_{\lambda} \lc r \rc \rc^{a_{r}\lc x \rc } \\
			& \quad + \sum_{s = 1}^{s_{x} -1} \lc \lc \prod_{r = 1}^{s} p_{r} \rc \lc 1 - p_{ s  + 1} \rc \prod_{r = s + 1}^{s_{x}} \lc \iota_{\lambda} \lc r \rc \rc^{a_{r}\lc x  \rc } \cdot \prod_{r= s_{x} + 1}^{t}  \lc \iota_{\lambda} \lc r \rc \rc^{a_{r}\lc x  \rc }  \rc . \\ 
		\end{align*}    
		By properly applying identity \eqref{Eq5_Theor_Recurrent Case Inductive Identity}, to the last sum (for $s_{0} = s_{x}$),  one concludes that $ \lc \tilde{S}_{\bar{d}, \bar{p}}g_{\lambda, t} \rc \lc x \rc = \lambda g_{\lambda, t}\lc x \rc$.

		Now, consider the case where $1 \le t < s_{x}$. In this case, one has that:
		
		\begin{align*}
			\lc \tilde{S}_{\bar{d}, \bar{p}}g_{\lambda, t} \rc \lc x \rc \; & = \;  \lc 1 - p_{1} \rc \prod_{r=1}^{t} \lc \iota_{\lambda} \lc r \rc \rc^{a_{r}\lc x \rc }  + \lc \prod_{r=1}^{s_{x}} p_{r} \rc + \sum_{s = t}^{s_{x} -1} \lc \prod_{r = 1}^{s} p_{r} \rc \lc 1 - p_{s+1} \rc   \\
			& \quad + \sum_{s = 1}^{t - 1} \lc  \lc \prod_{r = 1}^{s} p_{r} \rc \lc 1 - p_{s + 1} \rc \prod_{r= s +1}^{t} \lc \iota_{\lambda} \lc r \rc \rc^{a_{r}\lc x  \rc } \rc    .
		\end{align*}
		By adding and subtracting the term  $\lc \prod_{r = 1}^{t} p_{r} \rc \lc \iota_{\lambda}\lc t \rc \rc^{a_{t}\lc x \rc +1} $, and applying identity \eqref{Eq5_Theor_Recurrent Case Inductive Identity} (for $s_{0} = t$), one obtains 
		$$ \tilde{S}_{\bar{d}, \bar{p}}  g_{\lambda, t } \; = \; \lambda g_{\lambda, t}\lc x \rc  +  \lc \prod_{r = 1}^{s_{x}} p_{r} \rc + \sum_{s = t}^{s_{x} -1} \lc \prod_{r = 1}^{s} p_{r} \rc \lc 1 - p_{s + 1} \rc  - \lc \prod_{r = 1}^{t} p_{r} \rc \lc \iota_{\lambda}\lc t \rc \rc^{a_{t}\lc x \rc +1} .$$
		Thus, one concludes that 
		$$ \ldav \lc \tilde{S}_{\bar{d}, \bar{p}} - \lambda I \rc g_{\lambda, t} \rdav_{\oo} \; \le \; 3 \cdot \prod_{r = 1}^{t} p_{r} \; {\longrightarrow} \; 0  \quad \text{as} \quad t \to +\oo , $$
		since $\prod_{r=1}^{+\oo} p_{r} = 0$ by assumption. Therefore, $\lambda \in \sigma_{ap}\lc \tilde{S}_{\bar{d}, \bar{p}}, \mc{C}\lc \overline{\Gamma_{\bar{d}, \bar{p}} } \rc \rc$.

		\paragraph{Case 2:} Now suppose that $g_{\lambda} \in \mc{C}\lc \overline{\Gamma_{\bar{d}}} \rc$. In this case, we claim that $ \lambda \in \sigma_{pt}\lc \tilde{S}_{\bar{d}, \bar{p}}, \overline{\Gamma_{\bar{d} }}\rc $. 
		Indeed, applying identity \eqref{Eq5_Theor_Recurrent Case Inductive Identity} as in the previous case, one readily verifies that $g_{\lambda}$ is an eigenfunction of $\tilde{S}_{\bar{d}, \bar{p}}$ corresponding to the eigenvalue $\lambda$. 
		
		This completes the proof. 
		
	\end{proof}

	\paragraph{}
	
	We now address the proof of relation \eqref{Eq__Theor_Conditional Main Result} in the case where the product of the probabilities satisfies $\prod_{r=1}^{+\oo} p_{r} \;> 0$, i.e., the transient case. Since the argument in this setting is more technical, we begin by outlining the proof in a special, more tractable case: when the sequence $\lc p_{r} \rc_{r \ge 1}$ is identically equal to $1$, denoted by $\bar{1}:= \lc 1 \rc_{r \ge 1}$. In this case, we observe that for each $r \ge 1$, the corresponding function simplifies to $f_{r}\lc z \rc =  z^{d_{r}}$. The following claims can be verified: 
	\begin{enumerate}
		\item The fibered filled Julia set  $\mc{E}_{\bar{d}, \bar{1}}$ coincides with the closed unit disc $\overline{B\lc 0, 1 \rc}$. 
		\item The operator $\tilde{S}_{\bar{d}, \bar{1}}: \mc{C}\lc \overline{\Gamma_{\bar{d} }} \rc \to \mc{C}\lc \overline{\Gamma_{\bar{d} }} \rc $ acts as a shift: $ \lc \tilde{S}_{\bar{d}, \bar{1}} \rc g\lc x \rc = g\lc x+1 \rc$. 
		\item The spectrum $\sigma\lc \tilde{S}_{\bar{d}, \bar{1}}, \mc{C}\lc \overline{\Gamma_{\bar{d} }} \rc \rc $ coincides with the unit circle $\SP^{1}:= \lfp \lambda \in \C :\; \lav \lambda \rav = 1 \rfp$. 
	\end{enumerate}
	The first two assertions are straightforward. Regarding the third, Proposition \ref{Prop_Unconditional Main Result} implies that it suffices to show that the operator $\tilde{S}_{\bar{d}, \bar{1} } - \lambda \tilde{I}$ is bijective for every $\lambda \in \C$ with $\lav \lambda \rav < 1$. To prove surjectivity, let $q \in \mc{C}\lc \overline{\Gamma}_{\bar{d}} \rc $ be given. Define the function $g_{q}: \mc{C}\lc \overline{\Gamma_{\bar{d} }}  \rc \to \mc{C}\lc \overline{\Gamma_{\bar{d} }} \rc $ by 
	$$ g_{q}\lc x \rc := \; \sum_{n=0}^{+\oo} \lambda^{n} q\lc x - 1 - n \rc .$$
	A direct computation shows that $\lc \tilde{S}_{\bar{d}, \bar{1}} - \lambda \tilde{I} \rc g_{q} = q$, so the operator is surjective. To prove injectivity, assume $g\in \mc{C}\lc \overline{\Gamma}_{\bar{d}} \rc$ satisfies $\lc \tilde{S}_{\bar{d}, \bar{1}} - \lambda \tilde{I} \rc g  \equiv 0$. Then it follows that $g\lc x \rc = \lc 1/ \lambda \rc^{n} g\lc x +n \rc$, for all $x \in \overline{\Gamma_{\bar{d} }}$ and $n \ge 1$. 
	Since $\lav 1 / \lambda^{n} \rav \to +\oo$ as $ n \to +\oo$, and $g$ is bounded, this relation implies that $g \equiv 0$. Thus, the operator is injective, completing the argument.

	\paragraph{}
	
	We now turn to the general case. The key tool in our analysis is the following proposition, which asserts that if a complex number $\lambda$ belongs to the spectrum $\sigma\lc \tilde{S}_{\bar{d}, \bar{p}}, \mc{C}\lc \overline{\Gamma_{\bar{d} }} \rc \rc$, then the sequence $\lfp  \tilde{f}_{r}\lc \lambda \rc \rfp_{r \ge 1}$ converges in modulus to $1$ as $r \to +\oo$. The proof of this proposition is postponed to the end of this section.

	\begin{proposition}\label{Prop_Transient Case Spectrum Limit}
		Let $\bar{d}= \lc d_{r} \rc_{r \ge 1}$ be a sequence of positive integers, and let $\bar{p} = \lc p_{r} \rc_{r \ge 1}$ be a sequence of probabilities satisfying $\prod_{r=1}^{+\oo} p_{r} \;> 0$. Then, for any $\lambda \in \sigma\lc \tilde{S}_{\bar{d}, \bar{p}}, \mc{C}\lc \overline{ \Gamma_{\bar{d} }} \rc \rc$, the following limit holds: 
		\begin{equation}\label{Eq5_Prop_Transient Case Spectrum Limit}
			\lim_{r \to +\oo} \lav \tilde{f}_{r}\lc \lambda \rc \rav \; = \; 1 . 
		\end{equation}

	\end{proposition}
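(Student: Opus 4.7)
My plan is to reduce the problem via Lemma \ref{Lem1_Inductive_Spectra} to a spectral estimate on the shifted operators $\tilde{S}_{r+1}$, and then to obtain this estimate by a norm perturbation from the ``pure shift'' operator whose probabilities are identically $1$. Iterating Lemma \ref{Lem1_Inductive_Spectra} gives $\tilde{f}_{r}\lc \sigma\lc \tilde{S}_{\bar{d},\bar{p}}, \mc{C}\lc \overline{\Gamma_{\bar{d}}} \rc \rc \rc \sub \sigma\lc \tilde{S}_{r+1}, \mc{C}\lc \overline{\Gamma_{r+1}} \rc \rc$ for every $r \ge 1$; combined with $\| \tilde{S}_{r+1} \| \le 1$ this already yields $\lav \tilde{f}_{r}\lc \lambda \rc \rav \le 1$. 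It therefore suffices to prove a uniform \emph{lower} bound $\lav \mu \rav \ge 1 - \delta_{r}$ for every $\mu \in \sigma\lc \tilde{S}_{r+1}, \mc{C}\lc \overline{\Gamma_{r+1}} \rc \rc$, with $\delta_{r} \to 0$ as $r \to +\oo$.

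To produce this bound, I would compare $\tilde{S}_{r+1}$ with the pure shift $\tilde{S}_{\bar{d}_{r+1},\bar{1}}$ obtained by setting all probabilities after index $r$ equal to $1$. As outlined in the text preceding the proposition, this operator acts by $g \mapsto g \comp \textrm{V}$, is an isometric bijection of $\mc{C}\lc \overline{\Gamma_{r+1}} \rc$, has spectrum equal to $\SP^{1}$, and via the Neumann series $\sum_{n \ge 0} \mu^{n}\, \tilde{S}_{\bar{d}_{r+1},\bar{1}}^{-n-1}$ satisfies the resolvent bound $\| \lc \tilde{S}_{\bar{d}_{r+1},\bar{1}} - \mu \tilde{I} \rc^{-1} \| \le \lc 1 - \lav \mu \rav \rc^{-1}$ for every $\lav \mu \rav < 1$. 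Writing both operators via \eqref{Eq3_Transition Operator} and exploiting the telescoping identity
\[
\lc 1 - p_{r+1} \rc + \prod_{j=1}^{+\oo} p_{r+j} + \sum_{s=1}^{+\oo} \lc \prod_{j=1}^{s} p_{r+j} \rc \lc 1 - p_{r+s+1} \rc \; = \; 1,
\]
I expect to establish the pointwise estimate, uniform in $x \in \overline{\Gamma_{r+1}}$,
\[
\lav \lc \tilde{S}_{r+1} - \tilde{S}_{\bar{d}_{r+1},\bar{1}} \rc g\lc x \rc \rav \; \le \; 2 \lc 1 - \prod_{j=1}^{s_{x}} p_{r+j} \rc  \| g \|_{\oo},
\]
hence the norm bound $\| \tilde{S}_{r+1} - \tilde{S}_{\bar{d}_{r+1},\bar{1}} \| \le \delta_{r} := 2\lc 1 - \prod_{j=1}^{+\oo} p_{r+j}\rc$, which tends to $0$ precisely because $\prod_{r \ge 1} p_{r} > 0$.

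With these two ingredients, the standard perturbation criterion for bounded invertibility applies: whenever $\delta_{r} \lc 1 - \lav \mu \rav \rc^{-1} < 1$, i.e.\ $\lav \mu \rav < 1 - \delta_{r}$, the operator $\tilde{S}_{r+1} - \mu \tilde{I}$ inherits invertibility from $\tilde{S}_{\bar{d}_{r+1},\bar{1}} - \mu \tilde{I}$. Combined with $\sigma\lc \tilde{S}_{r+1}, \mc{C}\lc \overline{\Gamma_{r+1}} \rc \rc \sub \overline{B\lc 0,1 \rc}$, this yields the sandwich $\sigma\lc \tilde{S}_{r+1}, \mc{C}\lc \overline{\Gamma_{r+1}} \rc \rc \sub \lfp \mu \in \C :\; 1 - \delta_{r} \le \lav \mu \rav \le 1 \rfp$, and substituting $\mu = \tilde{f}_{r}\lc \lambda \rc$ gives \eqref{Eq5_Prop_Transient Case Spectrum Limit} after sending $r \to +\oo$. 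I anticipate the most delicate step to be the uniform pointwise estimate displayed above, particularly at $x_{max}$ where the counter $s_{x}$ is infinite so that the three transition terms of \eqref{Eq3_Transition Operator} collapse into a genuine infinite series whose tail must be controlled via the full telescoping identity.
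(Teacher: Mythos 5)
Your proof is correct, and it follows a genuinely different route than the one in the paper. Both proofs use Lemma \ref{Lem1_Inductive_Spectra} to reduce to estimating $\sigma\lc \tilde{S}_{r+1}, \mc{C}\lc \overline{\Gamma_{r+1}} \rc \rc$, and both produce essentially the same quantitative annular bound: the paper proves $2\prod_{s\ge r} p_s - 1 \le \lav \tilde{f}_{r-1}\lc \lambda\rc \rav \le 1$, while your $\delta_r = 2\lc 1 - \prod_{j\ge 1} p_{r+j}\rc$ yields $2\prod_{j \ge 1} p_{r+j}-1 \le \lav \tilde{f}_r\lc \lambda\rc\rav \le 1$, the same thing up to index shift. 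The mechanism, however, differs. The paper separates the argument into an eigenvalue case (handled by the explicit eigenfunction $g_\lambda = \prod \lc\iota_\lambda\lc r\rc\rc^{a_r\lc\cdot\rc}$ and the observation $\iota_\lambda\lc r\rc \to 1$) and a non-eigenvalue case, where surjectivity of $\tilde{S}_{\bar d,\bar p} - \lambda\tilde{I}$ is established by hand via Lemma \ref{Lem5_Surjective Operators on Banach Spaces} and the ad hoc approximate solution $g\lc x\rc = q\lc x-1\rc / \prod_{r=1}^{s_x} p_r$. You avoid the case split entirely: the invertibility of $\tilde{S}_{r+1} - \mu\tilde{I}$ for $\lav\mu\rav < 1-\delta_r$ follows by a standard Neumann-series perturbation of the invertible isometry $\tilde{S}_{\bar d_{r+1},\bar 1}$, whose resolvent bound and spectrum $\SP^1$ are established earlier in the paper for the warm-up case $\bar p = \bar 1$. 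Morally the two arguments are the same --- the paper's candidate $q\lc x-1\rc/\prod p_r$ is precisely $\tilde{S}_{\bar d,\bar 1}^{-1}q$ with a normalizing factor, and the verification in the paper re-derives exactly your norm estimate and your telescoping identity --- but your framing packages the whole thing as a single operator-norm comparison, which removes the need for Lemma \ref{Lem5_Surjective Operators on Banach Spaces} and the eigenvalue/non-eigenvalue dichotomy, and makes the role of the pure-shift model operator explicit. The pointwise estimate you flag as delicate does check out, including at $x_{\max}$: the absolute row sum of $\tilde{S}_{r+1} - \tilde{S}_{\bar d_{r+1},\bar 1}$ at a point with counter $s_x$ is $\lc 1-p_{r+1}\rc + \lc 1 - \prod_{j=1}^{s_x} p_{r+j}\rc + \sum_{s=1}^{s_x-1}\lc 1-p_{r+s+1}\rc\prod_{j=1}^{s} p_{r+j} = 2\lc 1 - \prod_{j=1}^{s_x} p_{r+j}\rc \le \delta_r$, with the $s_x=+\oo$ case handled identically via the full telescoping identity.
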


	\begin{proof}[Proof of \eqref{Eq__Theor_Conditional Main Result} (Case $\prod_{r=1}^{+\oo} p_{r} \;> 0$)] 
		
		Assume that the sequence $\bar{d}$ is bounded, and that the infinite product $\prod_{r=1}^{+\infty} p_{r}  $ is strictly positive. 
		
		\paragraph{}
		Our objective is to show that the spectrum $\sigma\lc \tilde{S}_{\bar{d}, \bar{p} } , \mc{C}\lc \overline{\Gamma_{\bar{d} }} \rc \rc$ coincides with the boundary $ \partial \mc{E}_{\bar{d}, \bar{p}}$. 
		Once this equality is established, it follows from the inclusion \eqref{Eq1_Approximate Spectrum Inclusion}, together with the boundary relations \eqref{Eq_Theor Conditional Point Spectrum} and \eqref{Eq_Theor Conditional Spectrum Boundary}, that the spectrum is equal to the approximate point spectrum.

		\paragraph{}
		To this end, it suffices to show that 
		\begin{equation}\label{Eq5_Prop5_Transient Case Limit Interior}
			\underset{r \to +\oo}{\lim} \tilde{f}_{r}\lc \lambda \rc  = 0 \quad \text{for all } \; \lambda \in inter\lc \mc{E}_{\bar{d}, \bar{p}} \rc .
		\end{equation} 
		Indeed, relations \eqref{Eq_Theor Conditional Point Spectrum} and \eqref{Eq_Theor Conditional Spectrum Boundary} imply that $\partial \mc{E}_{\bar{d}, \bar{p}} \sub \sigma\lc \tilde{S}_{\bar{d}, \bar{p} } , \mc{C}\lc \overline{\Gamma_{\bar{d} }} \rc \rc$, while Proposition \ref{Prop_Transient Case Spectrum Limit} guarantees that $\underset{r \to +\oo}{\lim}\lav \tilde{f}_{r}\lc \lambda \rc \rav =1$ for all $\lambda \in \sigma\lc \tilde{S}_{\bar{d}, \bar{p} } , \mc{C}\lc \overline{\Gamma_{\bar{d} }} \rc \rc$. Therefore, relation \eqref{Eq5_Prop5_Transient Case Limit Interior} implies that  none of the points in the spectrum $\sigma\lc \tilde{S}_{\bar{d}, \bar{p} } , \mc{C}\lc \overline{\Gamma_{\bar{d} }} \rc \rc$ lie in the interior of $\mc{E}_{\bar{d}, \bar{p}}$.

		\paragraph{}
		
		To prove \eqref{Eq5_Prop5_Transient Case Limit Interior}, we begin by establishing the following key fact about the asymptotic behavior of the sequence $\lc \tilde{f}_{r}\lc \lambda \rc \rc_{r \ge 1}$:   
		\begin{equation}\label{Eq5_Prop5_Transient Case Zero-One Law}
			\text{if } \; \lambda \in \mc{E}_{\bar{d}, \bar{p}} \quad \text{and} \quad \lim_{r \to +\oo} \lav \tilde{f}_{r} \lc \lambda \rc \rav \; \neq \; 1, \quad \text{then } \;  \lim_{r \to +\oo} \tilde{f}_{r} \lc \lambda \rc \;= \; 0. 
		\end{equation}
		Fix $\lambda \in \mc{E}_{\bar{d}, \bar{p}}$ such that $\lav \tilde{f}_{r}\lc \lambda \rc \rav < \rho_{0} \;< 1$ for infinitely many indices $ r \ge 1$, for some $0 \;< \rho_{0} \;< 1$. We claim that there exists $r_{0} \ge 1$ such that for all $ r\ge r_{0} +1$, one has $\lav \tilde{f}_{r} \lc \lambda \rc \rav \le \rho_{0}^{3/2}$. 
		To justify this, observe that for any $\rho_{0} \in \lc 0, 1 \rc$, there exists $\delta_{0} \;> 0$ such that for all $0 \;< \delta \le \delta_{0}$, one has 
		\begin{equation}\label{Eq5_Prop5_Transient Case Contraction Argument}
			\lav {\lambda - \delta \over 1 - \delta } \rav^{d} \; < \;  \rho_{0}^{3\over 2} \quad \text{for all } d \ge 2, \quad \text{whenever } \; \lav \lambda \rav \; \le \; \rho_{0}  .
		\end{equation}
		Since $\prod_{r=1}^{+\oo} p_{r} \;> 0$, we have $ \underset{r \to +\oo}{\lim} p_{r} = 1$. Thus, there exists $r_{0} \ge 1$ large enough so that $\lav \tilde{f}_{r_{0}}\lc \lambda \rc \rav \le \rho_{0} $ and $ 1 - p_{r} \le \delta_{0}$ for all $r \ge r_{0} $. 
		It then follows by induction that 
		\begin{equation}\label{Eq5_Prop5_Transient Case First Induction}
			\lav \tilde{f}_{r+1}\lc \lambda \rc \rav \; = \; \lav { \tilde{f}_{r} \lc \lambda \rc - \lc 1 - p_{r} \rc \over p_{r} } \rav^{d_{r+1}} \; \underset{\eqref{Eq5_Prop5_Transient Case Contraction Argument}}{\le}  \rho_{0}^{3/2} \quad \text{for all } \; r \ge r_{0} +1 .  
		\end{equation}
		
		Next, we prove by induction that for every $j \ge 1$, there exists $r_{j} \ge 1$ such that 
		\begin{equation}\label{Eq5_Prop5_Transient Case Second Induction}
			\lav \tilde{f}_{r}\lc \lambda \rc \rav \; < \; \rho_{0}^{{\lc 3 / 2 \rc}^{j+1}} \quad \text{ for all } r \ge r_{j} +1 .     
		\end{equation}
		The base case $j=0$ follows directly from \eqref{Eq5_Prop5_Transient Case First Induction}. Assume the inequality \eqref{Eq5_Prop5_Transient Case Second Induction} holds for some $j \ge 0$, and set $\rho_{j} = \rho_{0}^{\lc 3/2 \rc^{j+1}}$. 
		Choose $\delta_{j+1} \;> 0$ such that \eqref{Eq5_Prop5_Transient Case First Induction} holds with $\rho_{0}$ replaced by $\rho_{j}$. Since $p_{r} \to 1$, there exists $r_{j+1} \ge 1$ large enough such that $1 - p_{r} \le \delta_{j+1}$ for all $r \ge r_{j+1}$. Applying the same argument as above yields the inequality for $j+1$, completing the induction. 
		
		This proves that $\tilde{f}_{r}\lc \lambda \rc \to 0 $ as $ r \to +\oo$, thereby establishing \eqref{Eq5_Prop5_Transient Case Zero-One Law}.

		\paragraph{}
		
		We now show that for any open and connected subset $W \sub \mc{E}_{\bar{d}, \bar{p}}$, one has the dichotomy 
		\begin{equation}\label{Eq5_Prop5_Transient Case Connected Sets Limit}
			\lim_{r \to +\oo} \tilde{f}_{r}\lc \lambda \rc \; = 
			\; 0 \quad \text{for all } \lambda \in W \quad \text{or} \lim_{r \to +\oo} \lav \tilde{f}_{r}\lc \lambda \rc \rav \; = \; 1 \quad \text{for all } \lambda \in W. 
		\end{equation} 
		Indeed, the sequence of maps $\lfp \tilde{f}_{r} : W \to \C : \; r \ge 1 \rfp$ is uniformly bounded on $W$, and thus, by Montel's Theorem \cite[Chapter 3.2, Theorem 3.3, p. 225]{Stein_Shakarchi-Complex_Analysis}, it is a normal family. 
		Consequently, there exists a subsequence converging uniformly on compact subsets of $W$ to a bounded analytic function  $\tilde{f}: W \to \C$. 
		By relation \eqref{Eq5_Prop5_Transient Case Zero-One Law}, the pointwise limit must satisfy $\tilde{f}\lc W \rc \sub \SP^{1} \cup \lfp 0 \rfp$. 
		Since the image of a connected set under a continuous function is connected, we conclude that $\tilde{f}\lc W \rc = \lfp 0 \rfp$ or $\tilde{f}\lc W \rc \sub \SP^{1}$.
		Applying again relation \eqref{Eq5_Prop5_Transient Case Zero-One Law} completes the proof of \eqref{Eq5_Prop5_Transient Case Connected Sets Limit}.

		\paragraph{}
		To establish relation \eqref{Eq5_Prop5_Transient Case Limit Interior}, it remains to show that for each open and connected subset $W \sub \mc{E}_{\bar{d}, \bar{p}}$,  
		\begin{equation}\label{Eq5_Prop5_Transient Case Existence of Contraction Point} 
			\text{there exists } \; \lambda_{w} \in W \; \text{ such that } \; \underset{r \to +\oo}{\lim} \tilde{f}_{r}\lc \lambda_{w} \rc = 0 .  
		\end{equation}
		Combined with the dichotomy in \eqref{Eq5_Prop5_Transient Case Connected Sets Limit}, this implies that $\tilde{f}_{r} \lc \lambda \rc \to 0 $ for all $\lambda \in W$, hence proving \eqref{Eq5_Prop5_Transient Case Limit Interior}.  
		
		Let $W \sub \mc{E}_{\bar{d}, \bar{p}}$ be the interior of a connected component of $\mc{E}_{\bar{d}, \bar{p}}$. Set $\rho_{0} = 3/4$ and choose $\delta_{0} \le 1/4$ such that inequality \eqref{Eq5_Prop5_Transient Case Contraction Argument} holds for all $ 0 \;< \delta \le \delta_{0}$ and all $\lambda \in \C$ satisfying $\lav \lambda \rav \le 3/4$. 
		Choose $r_{0} \ge 1$ so that $1 - p_{r} \le \delta_{0}$ for every $ r \ge r_{0}$. 
		
		To prove \eqref{Eq5_Prop5_Transient Case Existence of Contraction Point}, it suffices to show that 
		\begin{equation}\label{Eq5_Prop5_Transient Case Positive Point}
			\text{there exists } \; r_{w} \ge r_{0} \text{ and } \lambda_{w} \in W \text{ such that } \; \tilde{f}_{r_{w}}\lc \lambda_{w} \rc \in \lb 0 , 1 \rc.     
		\end{equation} 
		Indeed, if $0 \le \tilde{f}_{r_{w}}\lc \lambda_{w} \rc \le 3/4$, then inequality \eqref{Eq5_Prop5_Transient Case Contraction Argument} implies that $\lav \tilde{f}_{r}\lc \lambda_{w} \rc \rav \le 3/4$ for all $ r \ge r_{w}$. 
		If instead $3/4 \;< \tilde{f}_{r_{w}}\lc \lambda_{w} \rc \;< 1$, then a direct calculation shows that 
		$$ 0 \;< \; \tilde{f}_{r_{w} +1}\lc \lambda_{w} \rc \; = \; \lc {\tilde{f}_{r_{w}}\lc \lambda_{w} \rc - \lc 1 - p_{r_{w}} \rc \over p_{r_{w}} } \rc^{d_{r_{w} +1}} \; \le \; \tilde{f}_{r_{w}}\lc \lambda_{w} \rc^{d_{r_{w} +1}} . $$
		Repeated application of this inequality yields a sequence $\lc \tilde{f}_{r}\lc \lambda_{w} \rc \rc_{r \ge r_{w}}$ of positive real numbers that eventually falls below $3/4$, say at some index $r'_{w} \ge r_{w}$, thus reducing the situation to the previous case. Therefore, $\lav \tilde{f}_{r} \lc \lambda_{w} \rc \rav \le 3/4$ for all $r \ge r_{w}'$. 
		This shows that $\underset{r \to +\oo}{\lim} \lav \tilde{f}_{r}\lc \lambda_{w} \rc \rav \neq 1$, and by \eqref{Eq5_Prop5_Transient Case Zero-One Law}, one concludes that $\underset{r \to +\oo}{\lim} \tilde{f}_{r}\lc \lambda_{w} \rc = 0$, as desired.  
		Hence, \eqref{Eq5_Prop5_Transient Case Positive Point} implies \eqref{Eq5_Prop5_Transient Case Existence of Contraction Point}, completing the argument.  
		
		\paragraph{}
		Thus, to complete the proof of \eqref{Eq5_Prop5_Transient Case Limit Interior}, it remains to establish relation \eqref{Eq5_Prop5_Transient Case Positive Point}. Fix $W$ to be the interior of a connected component of $\mc{E}_{\bar{d}, \bar{p}}$. Since $\partial W \sub \overline{\bigcup_{r=1}^{+\oo} \tilde{f}_{r}^{-1} \lfp 1 \rfp  }$, one may, without loss of generality, choose $r_{0} \ge 1$ sufficiently large such that for all $r \ge r_{0}$ the image $\tilde{f}_{r}\lc W \rc$ is an open set and satisfies  
		\begin{equation}\label{Eq5_Prop5_Transient Case Polar Coordinates Boundary One}
			1 \in \partial \tilde{f}_{r}\lc W \rc \quad \text{for all } \;  r \ge r_{0} -1 .     
		\end{equation}
		For each $r \ge r_{0}$ and $\lambda \in \C\backslash\lfp 1 - p_{r} \rfp $, we express  ${f}_{r}\lc \lambda \rc$ and $h_{r}\lc \lambda \rc$ into their polar coordinates:
		$${f}_{r}\lc \lambda \rc = \rho_{r}\lc \lambda \rc \cdot e^{\theta_{r}\lc \lambda \rc \cdot i} \quad \text{and} \quad h_{r}\lc \lambda \rc= \rho'_{r}\lc \lambda \rc \cdot e^{\theta'_{r}\lc \lambda \rc \cdot i }, $$
		where $\rho_{r}, \; \rho'_{r}: \C \backslash \lfp 1 - p_{r} \rfp  \to \R_{+} := \lfp  x \ge 0 \rfp$ and 
		$$ \theta_{r}, \; \theta'_{r}: \C \backslash \lfp 1 - p_{r} \rfp\to  \R/ 2\pi \Z := \lfp \theta \pmod{2\pi} : \; \theta \in \R \rfp $$
		(i.e., $\theta$ is considered modulo $2\pi$). For $\lambda = 1 - p_{r}$ one has $\rho_{r}\lc 0 \rc = \rho'_{r}\lc 0 \rc = 0 $, while the arguments $\theta_{r}\lc 1 - p_{r} \rc$ and $\theta_{r}'\lc 1 - p_{r} \rc$ may be chosen arbitrary in $\R/ 2\pi \Z$. However, for definiteness, we set $\theta_{r}\lc 0 \rc = \theta_{r}'\lc 0 \rc = 0$. 
		Note that both $\theta_{r}$ and $\theta_{r}'$ are continuous on $\C \backslash \lfp  1- p_{r} \rfp$.

		Our goal is to show the existence of $r_{w} \ge r_{0}$ and $\lambda_{w} \in W$ such that $\theta_{r_{w} +1}\lc \tilde{f}_{r_{w}}\lc \lambda_{w} \rc \rc = 0 \pmod{2\pi}$, which implies $\tilde{f}_{r_{w} +1}\lc \lambda_{w} \rc \in \lb 0, 1 \rc$. 
		Let us define 
		$$ \lf \theta \rf_{2\pi} := \; \min\lfp \theta -  2\pi k \ge 0 : \; k \in \Z \rfp $$ 
		and assume, for the sake of contradiction, that $ \tilde{f}_{r}\lc \lambda \rc \not\in \lb 0, 1 \rc$ for all $r \ge r_{0}$ and $\lambda \in W$. Otherwise, the desired relation already holds.   
		
		A simple geometric argument (see Figure \ref{Fig5_Trigonometric Argument}) yields the following inequality: for all $r \ge 1$ and $\lambda \in \C$,   
		\begin{equation}\label{Eq5_Prop5_Transient Case Trigonometry 1}
			0 \; < \;  \lf \theta_{r}\lc \tilde{f}_{r-1}\lc \lambda \rc \rc \rf_{2\pi} \; \le \; \lf \theta'_{r+1}\lc \tilde{f}_{r}\lc \lambda \rc \rc  \rf_{2\pi} \; \le \; \pi, \quad \text{whenever }\; 0 \; < \; \lf \theta_{r}\lc \tilde{f}_{r-1}\lc \lambda \rc \rc \rf_{2\pi} \; \le  \; \pi . 
		\end{equation} 
		Moreover, using the identity $ \tilde{f}_{r+1}\lc \lambda \rc = \lc h_{r+1}\lc \tilde{f}_{r}\lc \lambda \rc \rc\rc^{d_{r+1}}$, one deduces that 
		\begin{equation}\label{Eq5_Prop5_Transient Case Trigonometry 2}
			\theta_{r+1}\lc \tilde{f}_{r}\lc \lambda \rc \rc \; = \; d_{r+1} \cdot \theta_{r+1}'\lc \tilde{f}_{r}\lc \lambda \rc \rc  \pmod{2\pi} \quad \text{for all } \; r \ge 1 \;\text{ and }\; \lambda \in \C. 
		\end{equation}

		\begin{figure}  
			\centering
			\includegraphics[width=16cm]{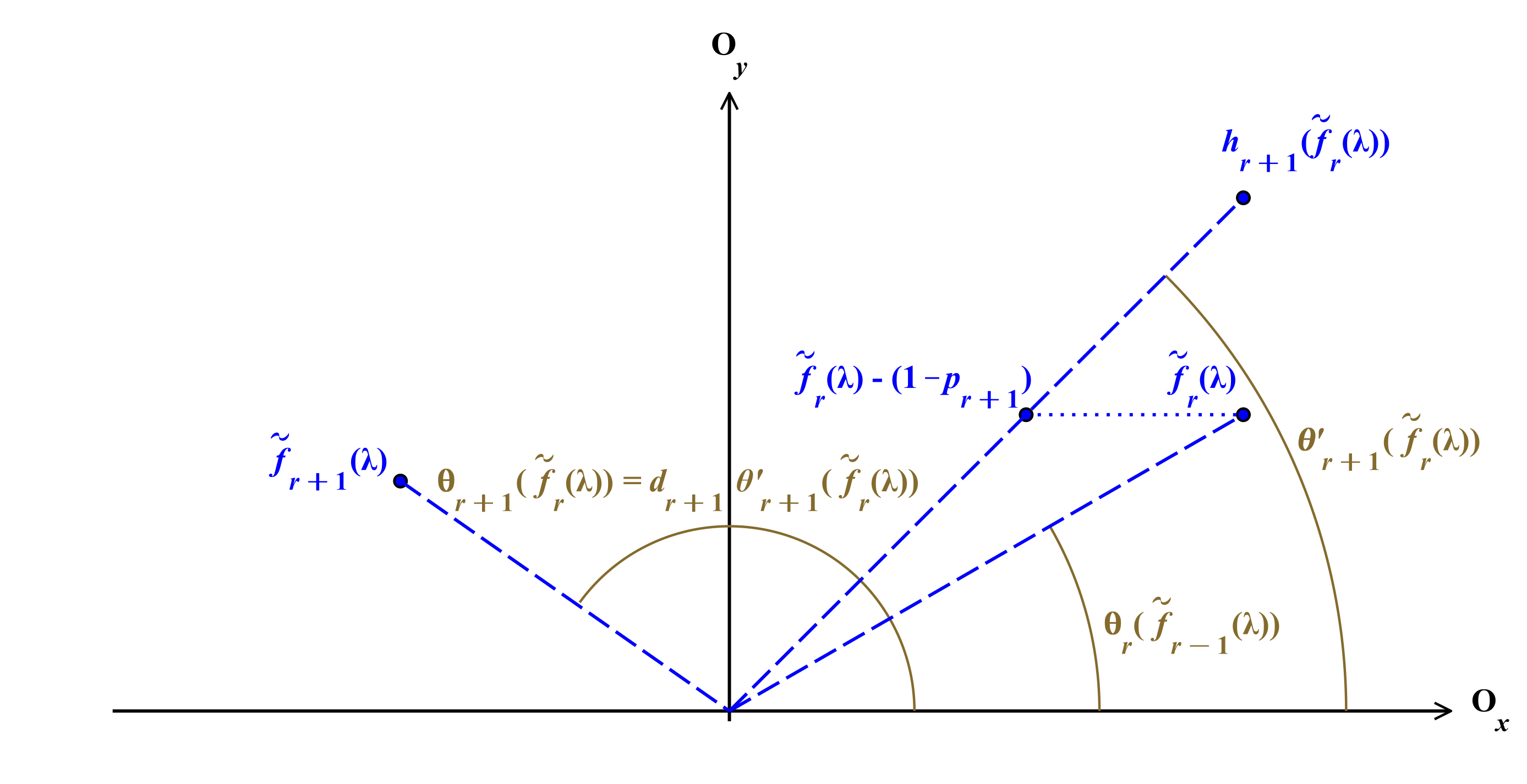}
			\caption{Illustration of the trigonometric relations \eqref{Eq5_Prop5_Transient Case Trigonometry 1} and \eqref{Eq5_Prop5_Transient Case Trigonometry 2}.} 
			\label{Fig5_Trigonometric Argument}
		\end{figure}

		\paragraph{}
		
		Since $W$ is connected and $1 \in \partial \tilde{f}_{r_{0}}\lc W \rc$, there exists $\e_{0}\;> 0$ such that either $\lc 0 , \e_{0} \rc \sub A_{r_{0}}$ or $\lc 2\pi - \e_{0}, 2\pi \rc \sub A_{r_{0}}$.  In the latter case, where $\lc 2\pi - \e_{0}, 2\pi \rc \sub A_{r_{0}}$, we may instead consider the complex conjugates  $\overline{\lambda}$ of $\lambda \in W$, noting that 
		$$ \tilde{f}_{r}\lc \overline{\lambda} \rc \; = \; \overline{\tilde{f}_{r}\lc \lambda \rc } \;\text{ and hence } \; \theta_{r}\lc \tilde{f}_{r-1}\lc \overline{\lambda} \rc \rc \; = \; - \theta_{r}\lc \tilde{f}_{r-1}\lc \lambda\rc  \rc \pmod{2\pi} . $$ 
		Thus, we may assume, without loss of generality, that  
		\begin{equation}\label{Eq5_Prop_Assumption Conjugates}
			\lc 0, \e_{r_{0}} \rc \sub A_{r_{0}} \quad \text{for some }  \; \e_{r_{0}} \;> 0. 
		\end{equation}

		Define the set 
		$$ A_{r} := \; \lfp \lf \theta_{r}\lc \tilde{f}_{r-1}\lc \lambda \rc \rc \rf_{2\pi} : \; \lambda \in W \rfp $$
		and the quantity 
		$$ a_{r} :=  \; \sup\lfp  \lf \theta_{r}\lc \tilde{f}_{r-1}\lc \lambda \rc \rc \rf_{2\pi} :\; \lc 0 ,  \lf \theta_{r}\lc \tilde{f}_{r-1}\lc \lambda \rc \rc \rf_{2\pi} \rc \sub A_{r} \; \text{ and } \;  \lambda \in W \rfp $$
		for all $r \ge r_{0}$. We claim that $a_{r}$ is well-defined for all such $r$. Indeed, relation \eqref{Eq5_Prop_Assumption Conjugates}, along with the continuity of $\theta_{r}$ and $\theta_{r}'$ on $\C \backslash \lfp 1 - p_{r} \rfp$, 
		imply that $A_{r}$ contains an interval of the form $\lc 0 , \e_{r} \rc$ for some $\e_{r} \;> 0$, for all $r \ge r_{0}$. 
		Thus, $a_{r} \;> 0$ is well defined and  
		$$ \lc 0, a_{r} \rc \; \sub \; A_{r} \quad \text{for all } r \ge r_{0} .$$

		By the continuity of $\theta_{r}$ and $\theta_{r+1}'$, together with relations \eqref{Eq5_Prop5_Transient Case Trigonometry 1} and \eqref{Eq5_Prop5_Transient Case Trigonometry 2}, one has that, if $ a_{r} < \pi$, then $a_{r+1} \ge 2 a_{r}$. Thus, there exists $r_{w} \ge r_{0}$ such that  $a_{r} \ge \pi$ for all $r \ge r_{w}$. 
		The continuity of the map $\theta_{r_{w}+1}':  \C \backslash \lfp 1 - p_{r} \rfp \to \R / 2\pi \Z$ then guarantees the existence of $\lambda_{w} \in W$ such that $\theta'_{r_{w}+1} \lc \tilde{f}_{r_{w}} \lc \lambda_{w} \rc \rc = 2\pi/d_{r_{w}+1} $, so that, by \eqref{Eq5_Prop5_Transient Case Trigonometry 2}, $\theta_{r_{w} + 1}\lc \tilde{f}_{r_{w}}\lc  \lambda_{w} \rc \rc = 0 $. 
		This establishes the desired relation \eqref{Eq5_Prop5_Transient Case Positive Point}.

		Since the choice of $W \sub \mc{E}_{\bar{d}, \bar{p}}$ as the interior of a connected component of $\mc{E}_{\bar{d}, \bar{p}}$ was arbitrary, the validity of \eqref{Eq5_Prop5_Transient Case Positive Point} implies \eqref{Eq5_Prop5_Transient Case Limit Interior}.

		The proof is complete.

	\end{proof}

	\begin{remark}
		It is known that if the sequence $\bar{d}$ of integers is constant and $p_{r} \to 1$ as $r \to +\infty$, with $p_{r} \ge 2\lc \sqrt{2} - 1 \rc$ for all $r \ge 1$, then the fibered filled Julia set $\mc{E}_{\bar{d}, \bar{p}}$ is connected \cite[Proposition 4.9]{mv}. This result appears to remain valid for any bounded sequence $\bar{d}$, provided that the terms of $\bar{p}$ are sufficiently large.
		
		In such cases, a slightly simpler proof of the equality in \eqref{Eq__Theor_Conditional Main Result}, under the condition $\prod_{r=1}^{+\infty} p_{r} > 0$, becomes available. Indeed, by \cite[Proposition 4.9]{mv}, one can choose a sufficiently large index $r_0 \ge 1$ such that the image $\tilde{f}_{r_0}\lc \mc{E}_{\bar{d}, \bar{p}} \rc$ is connected.
		
		Observe that $\lambda \in \operatorname{inter}\lc \mc{E}_{\bar{d}, \bar{p}} \rc$ if and only if $\tilde{f}_{r_0}\lc \lambda \rc \in \operatorname{inter}\lc \tilde{f}_{r_0}\lc \mc{E}_{\bar{d}, \bar{p}} \rc \rc$. Then, in view of relation \eqref{Eq5_Prop5_Transient Case Zero-One Law}, define $W_0$ as the set of points $\tilde{f}_{r_0}\lc \lambda \rc \in \tilde{f}_{r_0}\lc \mc{E}_{\bar{d}, \bar{p}} \rc$ whose orbits $\lc \tilde{f}_{r}\lc \lambda \rc \rc_{r \ge r_{0}} $ tend to zero. 
		
		By applying Lemma \ref{Lem5_Boundary and Non-Normal Points}, it follows that $\partial W_0 \subset \partial \tilde{f}_{r_0}\lc \mc{E}_{\bar{d}, \bar{p}} \rc$. Due to the connectivity of $\tilde{f}_{r_0}\lc \mc{E}_{\bar{d}, \bar{p}} \rc$, we conclude that $W_0 = \operatorname{inter}\lc \tilde{f}_{r_0}\lc \mc{E}_{\bar{d}, \bar{p}} \rc \rc$, thereby completing the proof.
	\end{remark}

	\paragraph{}
	To complete the proof of relation \eqref{Eq__Theor_Conditional Main Result} for the case $\prod_{r=1}^{+\oo} p_{r} \;> 0$, it remains to prove Proposition \ref{Prop_Transient Case Spectrum Limit}. To this end, we use the following lemma. 
	
	\begin{lemma}\label{Lem5_Surjective Operators on Banach Spaces}
		Let $\lc X, \ldav \cdot \rdav \rc$ be a Banach space and $S: X \to X$ a continuous linear operator. Assume that there exist constants $0 < c < 1 < C$ such that, for every point $q \in X$ there exists a point $g \in X$ such that 
		\begin{equation}\label{Eq5_Lem_Approximation Solutions}
			\ldav q - Sg \rdav \; \le \; c \cdot \ldav q \rdav \quad \text{and} \quad \ldav g \rdav \; \le \; C \ldav q \rdav.    
		\end{equation}
		Then, the operator $S$ is surjective. 
		
	\end{lemma}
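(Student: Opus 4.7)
The plan is to prove surjectivity by a standard geometric-series / telescoping iteration, often called a successive approximations argument. Given a target $q \in X$, I will construct $g$ with $Sg = q$ as the sum of a rapidly converging series of approximate preimages.

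First, I would set $q_0 := q$ and, using the hypothesis, pick $g_0 \in X$ with $\ldav q_0 - Sg_0 \rdav \le c \ldav q_0 \rdav$ and $\ldav g_0 \rdav \le C \ldav q_0 \rdav$. Then I define the residual $q_1 := q_0 - Sg_0$, which already satisfies $\ldav q_1 \rdav \le c \ldav q_0 \rdav$. Applying the hypothesis to $q_1$ produces $g_1$ with $\ldav q_1 - Sg_1 \rdav \le c \ldav q_1 \rdav$ and $\ldav g_1 \rdav \le C \ldav q_1 \rdav \le Cc \ldav q_0 \rdav$. Iterating, I obtain sequences $(g_n)_{n \ge 0}$ and $(q_n)_{n \ge 0}$ in $X$ with $q_{n+1} = q_n - Sg_n$, and the geometric estimates
\begin{equation*}
\ldav q_n \rdav \; \le \; c^n \ldav q \rdav \quad \text{and} \quad \ldav g_n \rdav \; \le \; C c^n \ldav q \rdav \quad \text{for all } n \ge 0.
\end{equation*}

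Second, since $0 < c < 1$, the series $\sum_{n \ge 0} g_n$ is absolutely convergent in the Banach space $X$, with norm bounded by $\frac{C}{1-c} \ldav q \rdav$. Let $g := \sum_{n=0}^{+\oo} g_n \in X$. The partial sums $G_N := \sum_{n=0}^{N} g_n$ satisfy, by a telescoping computation,
\begin{equation*}
S G_N \; = \; \sum_{n=0}^{N} S g_n \; = \; \sum_{n=0}^{N} \lc q_n - q_{n+1} \rc \; = \; q_0 - q_{N+1} \; = \; q - q_{N+1} .
\end{equation*}
Letting $N \to +\oo$ and using the continuity of $S$ together with $\ldav q_{N+1} \rdav \le c^{N+1} \ldav q \rdav \to 0$, I conclude $Sg = q$. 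Since $q \in X$ was arbitrary, $S$ is surjective.

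There is no real obstacle here: the argument is a routine Banach-space iteration, and the hypothesis has been tailored precisely so that the residuals contract geometrically while the corrections remain uniformly controlled. The only small point to highlight is that both inequalities in \eqref{Eq5_Lem_Approximation Solutions} are used, the first to ensure geometric decay of $\ldav q_n \rdav$, and the second to keep $\sum \ldav g_n \rdav$ summable, so that continuity of $S$ can be applied to pass to the limit.
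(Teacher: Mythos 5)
Your proof is correct and follows essentially the same route as the paper: the same residual sequence $q_{n+1} = q_n - Sg_n$, the same geometric bounds, absolute convergence of $\sum g_n$ in the Banach space, and the telescoping identity $SG_N = q - q_{N+1}$ to pass to the limit. No discrepancies to note.
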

	
	\begin{proof}
		Fix $q_{0} \in X$ and define the vectors $q_{j}$ inductively by the recurrence relation $q_{j} = q_{j-1} - S g_{j-1}$ for $j \ge 1$, where $g_{j-1} \in X$ is the vector provided by assumption \eqref{Eq5_Lem_Approximation Solutions} for $q = q_{j-1}$. 
		Since $X$ is a Banach space and $\ldav g_{j}  \rdav \le C \ldav q_{j} \rdav$, the series $g := \sum_{j=0}^{+\oo} g_{j}$ converges in $X$ and satisfies 
		$$ \ldav g \rdav \; \le \; C \sum_{j=0}^{+\oo} \ldav q_{j} \rdav \; \le \;  C \ldav q_{0} \rdav \sum_{j=0}^{+\oo} c^{j} \; \le \; {C \over 1 - c } \ldav q_{0} \rdav . $$
		Finally, since $q_{j} = q_{0} - S\lc\sum_{i=0}^{j-1} g_{i} \rc$ for all $j \ge 1$, taking the limit as $j \to +\oo$, one obtains  $S g = q_{0}$. The choice of $q_{0} \in X$ was arbitrary, so the proof of the lemma is complete.  
		
	\end{proof}

	\paragraph{}

	\begin{proof}[Proof of Proposition \ref{Prop_Transient Case Spectrum Limit}] 
		
		Recall that for each $r \ge 1$, we define the shifted sequences $\bar{d}_{r} = \lc d_{r-1 +j} \rc_{j\ge 1}$ and $\bar{p}_{r} = \lc p_{r-1+j} \rc_{j \ge 1}$, as in relation \eqref{Eq5_Forward Sequences of Probabilities and Integers}.

		\paragraph{} 
		
		Assume that $\prod_{r=1}^{+\oo} p_{r} \;> 1/2$, and set 
		$$ C := \; \prod_{r=1}^{+\oo} p_{r}^{-1} \;< \; 2 .$$ 
		To prove \eqref{Eq5_Prop_Transient Case Spectrum Limit}, we treat two separate cases. 
		
		\paragraph{Case 1:}
		Fix $\lambda \in \sigma_{pt}\lc \tilde{S}_{\bar{d}, \bar{p}}, \mc{C}\lc \overline{\Gamma_{\bar{d} }} \rc \rc$. We appeal to Lemma \ref{Lem_Structure of Fibered Julia Set 1} and relation \eqref{Eq3_Projection of the Stochastic Operator} to conclude that the map 
		$$ g_{\lambda}\lc x\rc := \; \prod_{r=1}^{+\oo} \lc \iota_{\lambda}\lc r \rc \rc^{a_{r}\lc x \rc } $$ 
		is an eigenfunction associated to the eigenvalue $\lambda$, and hence $g_{\lambda} \in \mc{C}\lc \overline{\Gamma_{\bar{d} }} \rc$. Since $ q_{r} \to 0$ in $\overline{\Gamma_{\bar{d} }}$ as $r \to +\oo$, and since $g_{\lambda}\lc q_{r-1} \rc  =   \iota_{\lambda}\lc r \rc$ for each $r \ge 1$, it follows that
		$$\underset{r \to +\oo}{\lim} \iota_{\lambda}\lc r \rc \; = \; \lim _{r \to \infty} g_{\lambda}\lc q_{r-1} \rc \; = \; g_{\lambda}\lc 0 \rc \; = \;  1 .$$ 
		Moreover, since $\tilde{f}_{r}\lc \lambda \rc = \lc \iota_{\lambda} \lc r \rc \rc^{d_{r}}$, one immediately deduces that $\underset{r \to +\oo}{\lim} \tilde{f}_{r}\lc \lambda \rc = 1$.

		\paragraph{Case 2:} Fix $\lambda \in \sigma\lc \tilde{S}_{\bar{d}, \bar{p}}, \mc{C}\lc \overline{\Gamma_{\bar{d} }} \rc \rc \backslash \sigma_{pt}\lc \tilde{S}_{\bar{d}, \bar{p}}, \mc{C}\lc \overline{\Gamma_{\bar{d} }} \rc \rc $. 
		By Lemma \ref{Lem1_Inductive_Spectra}, we know that $\tilde{f}_{r-1}\lc \lambda \rc \in \sigma\lc \tilde{S}_{\bar{d}_{r}, \bar{p}_{r}}, \mc{C}\lc \overline{\Gamma_{\bar{d}_{r} }} \rc \rc $ for all $r \ge 1$. Without loss of generality, we assume
		that 
		\begin{equation}\label{Eq5_Prop5_Point Spectrum Assumption}
			\tilde{f}_{r-1}\lc \lambda \rc  \in \sigma\lc \tilde{S}_{\bar{d}_{r}, \bar{p}_{r}}, \mc{C}\lc \overline{\Gamma_{\bar{d}_{r} }} \rc \rc \backslash \sigma_{pt}\lc \tilde{S}_{\bar{d}_{r}, \bar{p}_{r}}, \mc{C}\lc \overline{\Gamma_{\bar{d}_{r} }} \rc \rc \quad \text{for all }\; r \ge 1.
		\end{equation} 
		Indeed, if this fails for some $r$, we reduce to Case 1, since $\tilde{f}_{r-1}\lc \lambda \rc$ will be an eigenvalue of the operator $\tilde{S}_{\bar{d}_{r}, \bar{p}_{r}} $ acting on $\mc{C}\lc \overline{\Gamma_{\bar{d}_{r} }} \rc$. 
		
		We now aim to prove that 
		\begin{equation}\label{Eq5_Prop5_Spectrum Limit}
			2\prod_{s=r}^{+\oo}p_{s} - 1 \; \le \; \lav \tilde{f}_{r-1}\lc \lambda \rc \rav \; \le \; 1 \quad \text{for all } \; r \ge 1 .
		\end{equation}
		Taking the limit as $r \to +\oo$ in \eqref{Eq5_Prop_Transient Case Spectrum Limit}, and noting that $ \prod_{s = r}^{+\oo} p_{s} \to 1$, yields the desired conclusion.

		The upper bound \eqref{Eq5_Prop5_Spectrum Limit} follows directly from Lemma \ref{Lem_Structure of Fibered Julia Set 1}. For the lower bound, it suffices to prove the case $r=1$, since the general case then follows from \eqref{Eq5_Prop5_Point Spectrum Assumption} and Lemma \ref{Lem1_Inductive_Spectra}. 
		
		Suppose, for contradiction, that $\lav \lambda \rav < 2 \prod_{r=1}^{+\oo} p_{r} -1$. We may assume $2\prod_{r=1}^{+\oo} p_{r} - 1 \;> 0$, since \eqref{Eq5_Prop5_Spectrum Limit} holds trivially. 
		We claim that the operator $\lc \tilde{S}_{\bar{d}, \bar{p}} - \lambda \tilde{I} \rc$ is invertible. Since $\lambda$ is not an eigenvalue, it suffices to prove that the operator is surjective.

		Let $q \in \mc{C}\lc \overline{\Gamma_{\bar{d} }} \rc$, and define 
		$$ g\lc x \rc := \; { q\lc x -1 \rc \over \prod_{r=1}^{s_{x} } p_{r} } .$$ 
		It is readily verified that $g \in \mc{C}\lc \overline{\Gamma_{\bar{d} }} \rc$, and $\ldav g \rdav_{\oo} \le C \ldav q \rdav_{\oo}$. For every $x \in \overline{\Gamma_{\bar{d} }}$, we estimate:  
		\begin{align*}
			\lav q\lc x \rc - \lc \tilde{S}_{\bar{d}, \bar{p}} - \lambda \tilde{I} \rc g \lc x \rc \rav \; & = \; \lav \lc 1-p_{1} - \lambda \rc g\lc x \rc + \sum_{s=1}^{s_{x} -1} \lc \prod_{r=1}^{s} p_{r} \lc 1 - p_{s+1} \rc g\lc T_{s}\lc x \rc \rc \rc \rav \\
			&\le \; {1 \over \prod_{r=1}^{s_{x}} p_{r} } \cdot \lc \lav \lambda \rav + 1 -p_{1} + \sum_{s=1}^{s_{x} -1 } \lc \lc 1 - p_{s+1} \rc \cdot \prod_{r=1}^{s} p_{r} \rc\rc \cdot \ldav q \rdav_{\oo} \\
			& \le {1 \over \prod_{r=1}^{s_{x}} p_{r} } \cdot \lc \lav \lambda \rav + 1 - \prod_{r=1}^{s_{x}} p_{r} \rc  \cdot \ldav q \rdav_{\oo}  \; < \;  c \cdot \ldav q \rdav_{\oo}, 
		\end{align*}
		where 
		$$   c:= \; {1 \over \prod_{r=1}^{+\oo} p_{r} } \cdot \lc \lav \lambda \rav + 1 - \prod_{r=1}^{+\oo} p_{r} \rc \;< \; 1 . $$
		Then the inequality above yields 
		$$ \ldav q - \lc \tilde{S}_{\bar{d}, \bar{p}} - \lambda \tilde{I} \rc g \rdav_{\oo} \; \le \; c \cdot \ldav q \rdav_{\oo} , $$
		so by by Lemma \ref{Lem5_Surjective Operators on Banach Spaces}, the operator $\tilde{S}_{\bar{d}, \bar{p}} - \lambda \tilde{I}$ is surjective. 
		
		This contradicts the assumption that $\lambda \in \sigma\lc \tilde{S}_{\bar{d}, \bar{p}}, \mc{C}\lc \overline{\Gamma_{\bar{d} }} \rc \rc \backslash \sigma_{pt}\lc \tilde{S}_{\bar{d}, \bar{p}}, \mc{C}\lc \overline{\Gamma_{\bar{d} }} \rc \rc $, and hence we must have $\lav \lambda \rav \ge 2\prod_{r=1}^{+\oo} p_{r} -1 $, which establishes the left-hand side of \eqref{Eq5_Prop5_Spectrum Limit}.

		The proof of the proposition is complete.

	\end{proof}

	\section{Open Problems}\label{Sec_Open Problems}
	
	In this section, we discuss some open problems that naturally emerge from our work.
	
	\begin{problem}
		Can the assumption that the sequence $\bar{d}$ is bounded be removed from the statement of Theorem \ref{Theor_Conditional Main Result}?    
	\end{problem}
	
	Our proof of relation \eqref{Eq__Theor_Conditional Main Result} in the case where $\prod_{r =1}^{+\infty} p_{r} = 0$ does not require the sequence $\bar{d}$ to be bounded. 
	However, the proof of the same relation in the case $\prod_{r=1}^{+\infty} p_{r} > 0$ relies on the fact that the set $\bigcup_{r=1}^{+\infty} \tilde{f}_{r}^{-1}\lfp 1 \rfp$ is dense in the boundary $\partial \mc{E}_{\bar{d}, \bar{p}}$.
	This density is insured by Lemma \ref{Lem5_Boundary and Non-Normal Points}(2), but only under the assumption that $\bar{d}$ is bounded. Therefore, extending the result to unbounded sequences $\bar{d}$ would require a version of Lemma \ref{Lem5_Boundary and Non-Normal Points}(2) that holds for all unbounded sequences $\bar{d}$.

	\paragraph{}
	Another question concerns the residual spectrum $\sigma_{r}\big( \tilde{S}_{\bar{d}, \bar{p}}, \mc{C}\big( \overline{\Gamma_{\bar{d}}} \big) \big)$. In \cite[Proposition 4.3]{mv}, the authors show that the residual spectrum of the operator $S_{\bar{d}, \bar{p}}$ acting on a space $X$---where $X$ is either the space of $\alpha$-summable complex sequences $l^{\alpha}\lc \Z_{+} \rc$ for $\alpha > 1$, the space $c\lc \Z_{+} \rc$ of convergent sequences, or the space $c_{0}\lc \Z_{+} \rc$ of sequences converging to zero---is empty.
	
	\begin{problem}
		Given a bounded sequence of integers $\bar{d}$ and a sequence of probabilities $\bar{p}$, is the residual spectrum $\sigma_{r}\big( \tilde{S}_{\bar{d}, \bar{p}}, \mc{C}\big( \overline{\Gamma_{\bar{d}}} \big) \big)$ empty? 
	\end{problem}

	\section{Annex}\label{Sec_Auxiliary Lemmas Proofs}

	\paragraph{Proof of Lemma \ref{Lem_Structure of Fibered Julia Set 1}}
	
	Fix a sequence of positive integers $\bar{d}= \lc d_{r} \rc_{r \ge 1}$ and a sequence of probabilities $\bar{p} = \lc p_{r} \rc_{r \ge 1}$, as given in relations \eqref{Eq1_Sequence of Integers} and \eqref{Eq1_Sequence of Probabilities}, respectively. 
	Given a complex number $\lambda \in \C$, define the sequence $ \bm{v}_{\lambda} \in l^{\oo}\lc \Z_{+} \rc$, and the map $\iota_{\lambda}\lc r \rc$ for all $ r \ge 1$, according to relations \eqref{ps1} and \eqref{ps2}, respectively. 
	
	\paragraph{}
	
	We begin by proving equivalence \eqref{Eq_Divergent Sequences}. Fix $\lambda \in \C$. It is clear that if the sequence $\lc \lav \tilde{f}_{r} \lc \lambda \rc \rav \rc_{r \ge 1}$ diverges to infinity, then it must eventually contain terms with absolute value greater than one for $r$ sufficiently large. To prove the converse, it suffices to show that for every $r \ge 1$, and for every $z \in \C$ with $\lav z \rav \;> 1$, one has $\lav h_{r}\lc z\rc \rav \;> 1$, where the map $h_{r}$ is defined in \eqref{ps3}. 
	
	Since $\tilde{f}_{r} =h_{r}^{d_{r}}$, for each $r \ge 1$, one can use induction to show that if $\lav \tilde{f}_{r_{0}}\lc \lambda \rc \rav \;> 1$ for some $r_{0} \ge 1$, then 
	$$ \lav \tilde{f}_{r_{0} + k}\lc \lambda \rc \rav \; \ge \; \lav \tilde{f}_{r_{0}}\lc \lambda \rc  \rav^{d_{r_{0} + 1}\cdot d_{r_{0} + 2} \cdots d_{r_{0} + k}}, \quad \text{for all } k \ge 1 .$$ 
	Under the assumption $\lav \tilde{f}_{r_{0}}\lc \lambda \rc \rav \;> 1$, this inequality implies that the sequence $\lc \tilde{f}_{r} \rc_{r \ge 1}$ diverges to infinity. 
	
	It remains to verify that $\lav h_{r}\lc z \rc \rav \ge \lav z \rav $ whenever $\lav z \rav \;> 1$. 
	Fix $r \ge 1$ and let $ z = \lc 1 + \h \rc + y i$, where $\h, y \in \R$ satisfy $\h^{2} + 2\h + y^{2} \;> 0$, so that $\lav z \rav \;> 1$. Then,
	$$h_{r}\lc z \rc = \lc 1 + {\h \over p_{r} } \rc +  {y \over p_{r} } i . $$
	Hence, 
	\begin{align*}
		\lav h_{r}\lc z \rc \rav^{2} \;  = \; 1 + {\h^{2} \over p_{r}^{2}} + 2 \cdot {\h \over p_{r}}  + {y^{2} \over p_{r}^{2}} \; \ge \; 1 + \h^{2}  + 2 \cdot \h  + y^{2}  \; = \; \lav z \rav^{2}, \\    
	\end{align*}
	where the inequality follows from the choice of $p_{r} \in \lc 0, 1 \rb$. This proves equivalence \eqref{Eq_Divergent Sequences}. 
	
	\paragraph{}
	We next establish the equivalent characterizations of the fibered filled Julia set $\mc{E}_{\bar{d}, \bar{p}}$. As for the identity  $\mc{E}_{\bar{d}, \bar{p}} = \lfp \lambda \in \mathbb{C}:  \lav \tilde{f}_{r}(\lambda) \rav \leq 1,\; \text{for every } r \geq 1 \rfp$, it follows directly from the definition of the fibered filled Julia set $\mc{E}_{\bar{d}, \bar{p}}$ together with the equivalence established in \eqref{Eq_Divergent Sequences}. 
	
	Furthermore, the equality 
	\begin{equation*}
		\lfp \lambda \in \mathbb{C} :  \lav \tilde{f}_{r-1}\lc \lambda \rc \rav \leq 1,\; \text{for every } r \geq 1 \rfp  
		\; = \;  \lfp \lambda \in \mathbb{C} : \;  \lav \iota_{\lambda}\lc r \rc \rav \leq 1, \; \text{for every } r \geq 1 \rfp     
	\end{equation*}
	is immediate upon observing that, by construction, $\tilde{f}_{r}\lc \lambda \rc = \iota_{\lambda}\lc r \rc^{d_{r}}$ for all $\lambda \in \C$ and $r \ge 1$. 
	
	It remains to verify the final equality: 
	\begin{equation*}
		\lfp \lambda \in \mathbb{C} : \;  \lav \iota_{\lambda}\lc r \rc \rav \leq 1,\; \text{for every } r \geq 1 \rfp \; = \;  \lfp  \lambda \in \mathbb{C} : \;  \lc v_{\lambda} \lc n \rc \rc_{n \in \Z_{+} } \quad \text{bounded} \rfp  .
	\end{equation*}
	To this end, we prove the inclusion 
	$$ \lfp  \lambda \in \mathbb{C}: \;  \lc v_{\lambda} \lc n \rc \rc_{n \in \Z_{+} } \; \text{ bounded } \rfp \; \sub \;  \lfp \lambda \in \mathbb{C}: \;  \lav \iota_{\lambda}\lc r \rc \rav \leq 1,\; \text{for every } r \geq 1 \rfp $$
	since the inverse inclusion follows directly from the definition of the sequence $ \bm{v}_{\lambda} \in l^{\oo}\lc \Z_{+} \rc$. 
	Suppose for contradiction that $\lambda \in \C$ is such that the sequence $ \bm{v}_{\lambda}$ is bounded, but there exists $r_{0} \ge 1$ for which $\lav \iota_{\lambda}\lc r_{0} \rc \rav \;> 1$. By equivalence \eqref{Eq_Divergent Sequences}, it then follows that 
	$ \lav \iota_{\lambda}\lc r \rc \rav \ge \lav \iota_{\lambda}\lc r_{0} \rc \rav \;> 1$ for all $r \ge r_{0}$. 
	Now define the sequence  $\lc m_{j} \rc_{j \ge 1}$ by 
	$$ m_{j} := \; \sum_{l=1}^{j} q_{r_{0} +l}, \quad \text{for all } \; j \ge 1,$$
	so that each $m_{j} \in \Z_{+} $. Using the recurrence structure of $\bm{v}_{\lambda}$, one finds that the values $\lav v_{\lambda}\lc m_{j} \rc \rav$ grow exponentially in $j$, which contradicts the assumption that $\bm{v}_{\lambda}$ is bounded. 
	
	Hence, it must be $\lav \iota_{\lambda}\lc r \rc \rav \le 1$ for all $r \ge 1$, completing the proof of the desired inclusion.

	\paragraph{}
	
	Finally, we demonstrate the identity
	$$ \mc{E}_{\bar{d}, \bar{p}} \; = \; \sigma_{pt}\lc S_{\bar{d}, \bar{p}}, l^{\oo}\lc \Z_{+} \rc \rc, $$ 
	and, in particular, that a vector $\bm{w} \in l^{\oo}\lc \Z_{+} \rc$ is an eigenvector of $S_{\bar{d}, \bar{p}}$ if and only if $\bm{w} = c \cdot \bm{v}_{\lambda}$ for some $c \in \C$ and $\lambda \in \mc{E}_{\bar{d}, \bar{p}}$. 
	
	We begin by establishing the inclusion $ \mc{E}_{\bar{d}, \bar{p}} \sub \sigma_{pt}\lc S_{\bar{d}, \bar{p}}, l^{\oo}\lc \Z_{+} \rc \rc $. To this end, we verify that for each $\lambda \in \mc{E}_{\bar{d}, \bar{p}}$, the vector $\bm{v}_{\lambda} \in l^{\oo}\lc \Z_{+} \rc$ satisfies the eigenvalue equation $S_{\bar{d}, \bar{p}} \bm{v}_{\lambda} = \lambda \bm{v}_{\lambda}$. Indeed, as shown previously, the assumption $\lambda \in \mc{E}_{\bar{d}, \bar{p}}$ implies that $\lav \iota_{\lambda}\lc r \rc \rav \le 1$ for every $r \ge 1$, and thus $\bm{v}_{\lambda} \in l^{\oo}\lc \Z_{+} \rc$. 
	Fix any $n \in \Z_{+}$. Then one computes  
	\begin{align*}
		\lc S \bm{v}_{\lambda} \rc_{n}   \; & = \; \lc 1 - p_{1} \rc v_{\lambda}\lc n \rc + \lc \prod_{r=1}^{s_{n}} p_{r} \rc v_{\lambda}\lc n+ 1 \rc \\
		& \quad + \sum_{s=1}^{s_{n}-1} \lc \prod_{r=1}^{s} p_{r} \rc \lc 1 - p_{s+1} \rc v_{\lambda}\lc n - \sum_{r=1}^{s}\lc d_{r} -1 \rc q_{r-1} \rc \\
		& = \; \lc 1 - p_{1} \rc \cdot \prod_{r=1}^{+\oo} \lc \iota_{\lambda}\lc r \rc \rc^{a_{r}\lc n \rc}  + \lc \prod_{r=1}^{s_{n}} p_{r} \rc \lc \iota_{\lambda}\lc s_{n} \rc^{ a_{s_{n}}\lc n \rc + 1} \rc \prod_{r= s_{n} +1 }^{+\oo} \lc \iota_{\lambda}\lc r \rc \rc^{a_{r}(n)}  \\ 
		& \quad + \sum_{s=1}^{s_{n}-1} \lc \prod_{r=1}^{s} p_{r} \rc \lc 1 - p_{s+1} \rc \prod_{r = s + 1}^{+\oo} \lc \iota_{\lambda}\lc r \rc \rc^{a_{r}\lc n \rc}  \\
		&\underset{\eqref{Eq5_Theor_Recurrent Case Inductive Identity}}{=} \; \lambda \cdot v_{\lambda}\lc n \rc, 
	\end{align*}
	confirming the eigenrelation. 
	
	To prove the inverse inclusion $\sigma_{pt}\lc S_{\bar{d}, \bar{p}}, l^{\oo}\lc \Z_{+} \rc \rc \sub \mc{E}_{\bar{d}, \bar{p}}$ we appeal to Lemma \cite[Lemma 3.6]{mv}, which asserts that a vector $\bm{w} \in l^{\oo}\lc \Z_{+} \rc$ is an eigenvector associated to the eigenvalue $\lambda \in \C$ if and only if there exists $c \in \C$ such that $\bm{w} = c \cdot \bm{v}_{\lambda}$. The assumption $\bm{w} \in l^{\oo}\lc \Z_{+} \rc$ then implies $\bm{v}_{\lambda} \in l^{\oo}\lc \Z_{+} \rc$, and by the previous characterizations of the fibered Julia set $\mc{E}_{\bar{d}, \bar{p}}$, it follows that $\lambda \in \mc{E}_{\bar{d}, \bar{p}}$. 
	
	We conclude that every eigenvalue of $S_{\bar{d}, \bar{p}}$ acting on $l^{\oo}\lc \Z_{+} \rc$  lies in $\mc{E}_{\bar{d}, \bar{p}}$. 
	
	The proof of the lemma is complete. 
	
	\hfill $ \square$

	\paragraph{Proof of Lemma \ref{Lem1_Inductive_Spectra}} Given $r \ge 1$, recall that a point $x \in \overline{\Gamma_{r} }$ is a sequence of the form $x = \lc a_{r-1+j } \rc_{j\ge 1}$, where $a_{r-1+ j} \in \lfp 0, \dots, d_{r-1+j} \rfp$ for $j \ge 1$. To facilitate the exposition, we may write $z = a_{r} a_{r+1}\cdots$. 
	
	\paragraph{}
	
	For all $r \ge 1$ and $0\le k \le d_{r}-1$, consider the  map   
	$$ \tilde{\Pi}_{k, r}:= \;  \tilde{\Pi}_{k, \bar{d}_{r}}: \mc{C} \lc \overline{\Gamma_{r+1}} \rc \to  \mc{C} \lc \overline{\Gamma_{r}} \rc , $$
	where, for all $g \in \mc{C} \lc \overline{\Gamma_{r+1}} \rc $, the map $ \tilde{\Pi}_{k,r}\lc g \rc \in \mc{C}\lc \overline{\Gamma_{r} } \rc$ is given by 
	\begin{equation*}
		\lc \tilde{\Pi}_{k,r} \lc g \rc \rc \lc x \rc :=   \begin{cases}
			g\lc a_{r+1} a_{r+2} \cdots \rc  &, \text{ if } a_{r} = k \\ 
			0 &, \text{ if } a_{r} \neq k   \\  
		\end{cases}  \quad \text{for all } z = a_{r}a_{r+1}\cdots \in \overline{\Gamma_{r} }.  
	\end{equation*} 
	Observe that $\tilde{\Pi}_{k, r} $ is indeed a well-defined injective continuous linear map that embeds space $\mc{C}\lc \overline{\Gamma_{r+1}} \rc$ in the space $\mc{C}\lc \overline{\Gamma_{r}} \rc$.  
	
	Upon restricting the domain $\overline{\Gamma_{r+1} }$ of the map $\tilde{\Pi}_{k, r} \lc g \rc $ to $\Gamma_{r+1}$, the operator $\tilde{\Pi}_{k,r}$ determines an operator $\Pi_{k,r}: \mc{C}\lc \Gamma_{r+1} \rc \to \mc{C}\lc \Gamma_{r} \rc$ given by the formula
	$$ \lc \Pi_{k, r} v\rc_{l}=  \left\{
	\begin{array}{cl}
		v_{m_{l}} &, \text{ if } l = k + m_{l} \cdot  d_{r},  \\
		0    &, \text{ otherwise}
	\end{array}
	\right.  \quad \text{for all } \; \bm{v}= \lc v_{n} \rc_{n \in \Z_{+} } \in l^{\infty}\lc \Z_{+} \rc .
	$$	
	In turn, the operator $\Pi_{k,r}$ determines the matrix 
	$$ 	\Pi_ {k, r} := \; \lc \lc \Pi_{k, r} \rc_{l, m} \rc_{l, m \in \mathbb{N}} $$
	where  
	\begin{equation}\label{Eq_Pi Maps}
		\lc \Pi_{k, r} \rc_{l, m} \; = \; \left\{
		\begin{array}{lr}
			1 &, \text{if } l = k + m \cdot d_{r} \\
			0  &, \text{otherwise } \hfill
		\end{array} 
		\right. \quad \text{for all } l,m \ge 0 .
	\end{equation}
	Under this notation, one has that for all $g \in \mc{C} \lc \overline{\Gamma_{r+1}} \rc $,
	\begin{eqnarray}\label{ppi}
		^{t}\lc \lc \tilde{\Pi}_{k,r} g \rc\lc 0 \rc, \lc \tilde{\Pi}_{k, r} g \rc \lc 1 \rc, \ldots \rc \; = \;  \Pi_{k, r} \; ^{t} \lc g\lc 0 \rc, g\lc 1 \rc, \ldots \rc .
	\end{eqnarray}
	
	\paragraph{}
	
	Analogously to the map $\Tilde{\Pi}_{k,r}$, set  $\tilde{F}_{k, r}: \mc{C}\lc \overline{\Gamma}_{r} \rc \to \mc{C}\lc \overline{\Gamma}_{r+1} \rc$ to be the linear map where, for all $h \in \mc{C}\lc \overline{\Gamma}_{r} \rc $, the map $ \tilde{F}_{k,r} \lc h \rc \in \mc{C}\lc \overline{\Gamma}_{r+1} \rc$ is given by 
	$$ \lc \tilde{F}_{k, r}\lc h \rc \rc \lc x \rc : = \;  h \lc k a_{r+1} a_{r+2} \ldots \rc, \quad z= a_{r+1} a_{r+2} \ldots \in \overline{\Gamma}_{r+1}, $$
	for all $h \in \mc{C} \lc \overline{\Gamma}_{r} \rc $ and $ k \in \lfp 0,1,\ldots, d_{r} - 1 \rfp$. Similarly to $\Pi_{k,r}$, the infinite matrix $F_{k,r}$ corresponding to the operator $\Tilde{F}_{k, r}$ is defined by the relation
	
	\begin{equation}\label{Eq_F Maps}
		\lc F_{k,r} v \rc_{l} \; = \; v_{k + l \cdot d_{r} }, \quad \text{for all }  \bm{v} \; = \; \lc v_{n} \rc_{n \in \Gamma_{r} } \in l^{\infty}\lc \Gamma_{r} \rc,\;
		l \ge 0.
	\end{equation}
	Observe that the identity operator $\Tilde{I}_{r}: \mc{C}\lc \overline{\Gamma}_{r} \rc \to \mc{C}\lc \overline{\Gamma}_{r} \rc$ can be decomposed in the form 
	\begin{equation}\label{Eq_Decomposition of the Identity Operator}
		\Tilde{I}_{r} \; = \; \sum_{k=0}^{d_{r} -1} \Tilde{\Pi}_{k,r} \circ \Tilde{F}_{k,r} .
	\end{equation}

	\paragraph{}

	The following lemma expresses the operator $\Tilde{R}_{r}^{d_{r}}$ in terms of the map $\Tilde{S}_{r+1}$ (both defined in \eqref{Eq_Inductive Operators}) in terms of the auxiliary maps $\Tilde{\Pi}_{k,r}$ and $\Tilde{F}_{k,r}$. 
	
	\begin{lemma}\label{Lem_Auxiliary Operators} \label{iko} 
		With the above notations,
		the following properties  hold for all $r \ge 1$:
		\begin{enumerate}
			
			\item
			$\tilde{R}_{r} \circ \tilde{\Pi}_{0, r} \;  = \; \tilde{\Pi}_{d_{r}-1, r}\circ \tilde{S}_{r+1}  \; \text{ and } \; \tilde{R}_{r}\circ \tilde{\Pi}_{k,r} \; = \; 
			\tilde{\Pi}_{k-1, r} , \; \forall \;  1 \leq k \leq d_{r}-1. $
			
			\item
			$\tilde{R}_{r}^ {d_{r} } \; = \; \sum_{k=0}^{d_{r} -1} \tilde{\Pi}_{k,r} \circ \tilde{S} _{r+1} \circ \tilde{F}_{k, r}$.
			
		\end{enumerate}
	\end{lemma}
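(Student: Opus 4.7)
The plan is to establish part (1) by direct pointwise evaluation on $\overline{\Gamma_{r}}$, and then to deduce part (2) as an algebraic consequence using the identity decomposition \eqref{Eq_Decomposition of the Identity Operator}. For (1), I fix $g \in \mc{C}\lc \overline{\Gamma_{r+1}} \rc$, write $h := \tilde{\Pi}_{k,r}\lc g \rc$, and evaluate $\lc \tilde{R}_{r} h \rc\lc x \rc$ for an arbitrary $x = b_{1} b_{2} \cdots \in \overline{\Gamma_{r}}$, splitting cases according to the value of the first digit $b_{1}$. Since $\tilde{R}_{r} = \tfrac{1}{p_{r}}\lc \tilde{S}_{r} - \lc 1 - p_{r}\rc \tilde{I}_{r} \rc$, the value $\lc \tilde{R}_{r} h \rc \lc x \rc$ is a weighted combination of $h\lc V\lc x \rc \rc, h\lc T_{1}\lc x \rc \rc, \ldots, h\lc T_{s_{x}-1}\lc x \rc \rc$, and because $h$ vanishes whenever its argument has first digit different from $k$, most of these contributions drop out in each case.

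For the identities $\tilde{R}_{r} \circ \tilde{\Pi}_{k,r} = \tilde{\Pi}_{k-1,r}$ with $1 \le k \le d_{r}-1$, the bookkeeping is immediate: if $b_{1} \in \lfp 0, \ldots, d_{r}-2 \rfp$ then $s_{x} = 1$ and only the $V\lc x \rc$-term survives, contributing non-trivially only when $b_{1} = k - 1$; if instead $b_{1} = d_{r}-1$, then $V\lc x \rc$ and every $T_{s}\lc x \rc$ have leading digit $0 \neq k$, so both sides vanish. The delicate identity is $\tilde{R}_{r} \circ \tilde{\Pi}_{0,r} = \tilde{\Pi}_{d_{r}-1, r} \circ \tilde{S}_{r+1}$: the only non-trivial case is $b_{1} = d_{r} - 1$, where I identify $\lc \tilde{R}_{r} h \rc\lc x \rc$ with $\lc \tilde{S}_{r+1} g \rc \lc x' \rc$ (where $x' = b_{2} b_{3} \cdots$) via the shift relations $\lc V_{r}\lc x \rc \rc' = V_{r+1}\lc x' \rc$ and $\lc T^{(r)}_{s}\lc x \rc \rc' = T^{(r+1)}_{s-1}\lc x' \rc$ (with the convention $T_{0}^{(r+1)}\lc x' \rc := x'$), together with the reparametrization $j \mapsto j-1$ that converts $\prod_{j=2}^{s}p_{r-1+j}$ into $\prod_{i=1}^{s-1}p_{r+i}$, thereby producing exactly the expansion of $\tilde{S}_{r+1}$ at $x'$.

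For (2), I iterate (1). Starting from $\tilde{\Pi}_{k,r}$, the first $k$ applications of $\tilde{R}_{r}$ decrement the lower index and yield $\tilde{\Pi}_{0,r}$; the $(k+1)$-th application introduces the factor $\tilde{S}_{r+1}$ on the right and jumps the index to $d_{r}-1$; the remaining $d_{r}-1-k$ applications decrement the index back to $k$. In total, $\tilde{R}_{r}^{d_{r}} \circ \tilde{\Pi}_{k,r} = \tilde{\Pi}_{k,r} \circ \tilde{S}_{r+1}$. Composing on the right with $\tilde{F}_{k,r}$, summing over $k \in \lfp 0, \ldots, d_{r}-1 \rfp$, and invoking \eqref{Eq_Decomposition of the Identity Operator} then yields the desired formula.

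The main obstacle is precisely the case $b_{1} = d_{r}-1$ in the identity $\tilde{R}_{r} \circ \tilde{\Pi}_{0,r} = \tilde{\Pi}_{d_{r}-1, r} \circ \tilde{S}_{r+1}$, which requires a careful term-by-term reconciliation between the expansion of $\tilde{R}_{r}$ (with weights built from $p_{r}, p_{r+1}, \ldots$) and that of $\tilde{S}_{r+1}$ (with weights built from $p_{r+1}, p_{r+2}, \ldots$). Some extra care is needed in the boundary scenario $s_{x} = +\oo$, where $x = x_{\max}$ and the finite sums become convergent series; once this identification is accomplished, the remainder of the argument is essentially elementary bookkeeping.
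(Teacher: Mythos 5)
Your proof is correct, and it takes a genuinely different route from the paper's. The paper first uses the compatibility relations \eqref{Eq3_Projection of the Stochastic Operator} and \eqref{ppi} to pass from the operators $\tilde R_r, \tilde\Pi_{k,r}, \tilde S_{r+1}$ on $\mc{C}(\overline{\Gamma_\cdot})$ to the corresponding infinite matrices $R_r, \Pi_{k,r}, S_{r+1}$ on $\Z_+$ (this is justified by density of $\Z_+$ in $\overline{\Gamma_r}$ and injectivity of the restriction map). It then computes the matrix entries $(R_r)_{l,\,k+md_r}$ directly: for $1\le k\le d_r-1$ this is an entry-chasing argument, and for $k=0$ it invokes the self-similar structure of the adding machine to identify the transition probability $s_r\big((d_r-1)+n d_r,\, m d_r\big)$ of $\textrm{AMFC}_{\bar d_r,\bar p_r}$ with $p_r\cdot s_{r+1}(n,m)$. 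Your approach works instead at the function level, evaluating $(\tilde R_r\circ\tilde\Pi_{k,r})g$ pointwise on all of $\overline{\Gamma_r}$ by unfolding the defining formula \eqref{Eq3_Transition Operator}, splitting on the leading digit, and reconciling the two sides term by term via the shift identities $\big(V^{(r)}(x)\big)' = V^{(r+1)}(x')$ and $\big(T_s^{(r)}(x)\big)' = T_{s-1}^{(r+1)}(x')$ together with the reindexing that converts the weight $\tfrac{1}{p_r}\prod_{j=1}^{s}p_{r-1+j}$ into $\prod_{i=1}^{s-1}p_{r+i}$. Both proofs ultimately encode the same self-similarity of the machine, but yours is purely analytic (no transition-probability reasoning), and it treats the boundary point $x_{\max}$, where $s_x=+\infty$ and the sums become convergent series, directly rather than implicitly through the density reduction. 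The paper's version is more compact once the matrix notation is in place; yours is more explicit. The derivations of $\tilde R_r^{d_r}\circ\tilde\Pi_{k,r}=\tilde\Pi_{k,r}\circ\tilde S_{r+1}$ and then of part (2) via the identity decomposition \eqref{Eq_Decomposition of the Identity Operator} are identical in both arguments.
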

	
	The proof of Lemma \ref{Lem_Auxiliary Operators} is given right after the proof of Lemma \ref{Lem1_Inductive_Spectra} that follows.

	\vspace{0.3 cm}
	
	\begin{proof}[Proof of Lemma \ref{Lem1_Inductive_Spectra}]
		
		Fix $r \ge 1$ and let $\tilde{S}_{r}$, $\tilde{R}_{r}^ {d_{r}}$ and $\tilde{S}_{r+1}$ be the linear operators defined in \eqref{Eq_Inductive Operators}. By the Spectral Mapping Theorem \cite[Theorem 1.2.18, p. 18]{Davies_Linear Operators and Their Spectra}, one has that
		$ f_{r} \lc \sigma\lc \tilde{S}_{r}, \mc{C}\lc \overline{\Gamma_{r}} \rc \rc \rc \; = \; \sigma\lc \Tilde{R}_{r}^{d_{r}}, \mc{C}\lc \overline{\Gamma_{r}} \rc \rc  $. Therefore, it is left to prove that $\sigma \lc \tilde{R}_{r}^ {d_{r}}, \mc{C} \lc \overline{\Gamma_{r}} \rc \rc  =  \sigma \lc \tilde{S}_{r+1}, \mc{C} \lc \overline{\Gamma_{r+1}} \rc \rc $.
		To this end, it suffices to show that the operator $ \lc \Tilde{R}_{r}^{d_{r}} - \lambda \Tilde{I}_{r} \rc$ is bijective if and only if the operator $\lc \Tilde{S}_{r+1} - \lambda \Tilde{I}_{r+1} \rc$ is bijective, for all $\lambda \in \mathbb{C}$.
		Notice that, by Lemma \ref{Lem_Auxiliary Operators} (2) and relation \eqref{Eq_Decomposition of the Identity Operator}, it holds that 
		\begin{eqnarray}
			\label{xxc}
			\lc \tilde{R}_{r}^ {d_{r}} - \lambda \tilde{I}_{r} \rc \; = \;  \sum_{k=0}^{d_{r}-1} \tilde{\Pi}_{k,r} \circ \lc  \tilde{ S}_{r+1} - \lambda  \tilde{I}_{r+1} \rc \circ \tilde{F}_{k,1}, \quad \text{for all } \lambda \in \mathbb{C}.
		\end{eqnarray}

		\vspace{0.5em}
		First, it is shown that the operator $\lc \tilde{R}_{r}^ {d_{r}} - \lambda \tilde{I}_{r} \rc$ is injective if and only if $\lc \tilde{S}_{r+1} - \lambda \tilde{I}_{r+1} \rc$ is injective. Indeed, assume that $ \lc \tilde{S}_{r+1} - \lambda  \tilde{I}_{r+1} \rc $ is not injective and let $g \in \mc{C} \lc \overline{\Gamma_{r+1}} \rc \setminus \lfp 0 \rfp$ such that
		$\lc \tilde{S}_{r+1} - \lambda  \tilde{I}_{r+1} \rc \lc g \rc = 0$.
		Let $h: \overline{\Gamma_{r}} \to \mathbb{C}$ be the map given by the formula 
		$$ h\lc w a_{r+1} \ldots \rc := \quad  g\lc a_{r+1} a_{r+2} \ldots \rc, \quad \text{for all } w a_{r+1} \ldots \in \overline{\Gamma_{r} }.$$
		Observe that $h \in \mc{C} \lc \overline{\Gamma_{r}} \rc \setminus \lfp 0 \rfp$ and
		$\tilde {F}_{w,r} \lc h \rc= g$, for all $w \in \lfp 0, \dots, d_{r}-1 \rfp$.
		Hence  $\lc \tilde{R}_{r}^{d_{r}} - \lambda \tilde{I}_{r} \rc \lc h \rc =0$.
		Thus, $\lc \tilde{R}_{r}^{d_{r}} - \lambda \tilde{I}_{r} \rc$ is not injective. 
		
		In the opposite direction, assume that $\lc \tilde{R}_{r}^{d_{r}} - \lambda \tilde {I}_{r}\rc$ is not injective, that is, there exists $h \in \mc{C} \lc \overline{\Gamma}_{r} \rc \backslash \lfp 0 \rfp$ such that $\lc \tilde{R}_{r}^{d_{r}} - \lambda \tilde {I}_{r}\rc \lc h \rc = 0$. Then, by relation \eqref{xxc}, one has that
		$$ \lc  \tilde{S}_{r+1} - \lambda\tilde{I}_{r+1} \rc \lc \tilde{F}_{k,r} \lc h \rc \rc= 0, \quad \text{for all } k \in \lfp 0,\ldots d_{r}-1 \rfp.$$ 
		Thus, upon noticing that $ \tilde{F}_{k,r} \lc h \rc \neq 0$, for some $k \in \lfp 0, \dots, d_{r}-1 \rfp$, one infers that the operator $\lc \tilde{S}_{r+1} - \lambda \tilde{I}_{r+1} \rc$ is not injective. 
		
		\paragraph{}
		
		To conclude the proof, we show that the operator $\lc \tilde{R}_{r}^{d_{r}} - \lambda \tilde{I}_{r} \rc$ is onto if and only if the operator $ \lc \tilde{S}_{r+1} - \lambda \tilde{I}_{r+1} \rc$ is onto. 
		Indeed, suppose that $\lc \tilde{R}_{r}^{d_{r}} - \lambda \tilde{I}_{r} \rc$ is onto.
		Fix $g \in \mc{C}\lc \overline{\Gamma}_{r+1} \rc $ and let $h: \overline{\Gamma}_{r} \to \mathbb{C}$ be defined by 
		$h\lc a_{r} a_{r+1} \dots \rc = g\lc a_{r+1} a_{r+2} \ldots \rc$ for all $a_{r} a_{r+1} \ldots \in \overline{\Gamma_{r}}$. 
		Hence, there exists $h' \in  \mc{C}\lc \overline{\Gamma_{r}} \rc$ such that $\lc \tilde{R}_{r}^{d_{r}} - \lambda \tilde{I}_{r} \rc \lc h' \rc = h $. It follows that
		$$\lc \tilde{S}_{r+1} - \lambda \tilde{I}_{r+1} \rc \circ \tilde F_{k,r} \lc h' \rc \; = \; g, \quad \text{for all } k \in \lfp 0,\ldots, d_{r} -1 \rfp.$$
		Thus, the operator $\lc \tilde{S}_{r+1} - \lambda \tilde{I}_{r+1} \rc $ is onto. In the opposite direction, it is readily checked that, if $ \lc \tilde{S}_{r+1} - \lambda \tilde{I}_{r+1} \rc$ is onto, then $\lc \tilde{R}_{r}^{d_{r}} - \lambda \tilde{I}_{r} \rc$ is onto.
		
		The proof of the lemma is complete.

	\end{proof}

	\bigskip
	
	\begin{proof}[Proof of Lemma \ref{Lem_Auxiliary Operators}]
		
		Fix $r \ge 1$ and let $\tilde{\Pi}_{k,r}$ and $\tilde{F}_{k,r}$ (respectively, $\Pi_{k, r} $ and $F_{k, r}$) be the operators (respectively, be the maps) defined in relations \eqref{Eq_Pi Maps} and \eqref{Eq_F Maps}, respectively, where $0 \le k \le d_{r}-1$.  
		
		\medskip
		
		\paragraph{Proof of Part (1).}
		
		We first prove that $\tilde{R}_{r} \circ \tilde{\Pi}_{k, r} = \tilde{\Pi}_{k-1, r}$ for all $1 \leq k \leq d_{r} - 1$
		Due to relations \eqref{Eq3_Projection of the Stochastic Operator} and \eqref{ppi}, it suffices to show that $R_{r} \circ \Pi_{k, r} = \Pi_{k-1, r}.$
		
		Observe that, for all $0 \le k \le d_{r} -1 $, it holds that 
		\begin{equation}\label{Eq_Lem_Auxiliary Pi}
			\lc R_{r} \circ \Pi_{k,r} \rc_{l, m} \; = \; \sum_{j=1}^{+\oo} \lc R_{r} \rc_{l, j} \cdot \lc \Pi_{k,r} \rc_{j, m} \; = \; \lc R_{r} \rc_{l, k + m \cdot d_{r}}.  
		\end{equation}
		For every $1 \leq k  \leq d_{r}-1$, it holds that  
		$$
		\lc R_{r} \rc_{l, k + m \cdot d_{r} } \; = \; \left\{
		\begin{array}{lr}
			1, & \text{if } l = k -1 + m \cdot d_{r} \\
			0 , & \text{otherwise } \hfill
		\end{array} 
		\right.
		$$
		Hence, 
		$$ R_{r}\circ \Pi_{k,r} \; = \; \Pi_{k-1, r}, \quad \text{for all } 1\leq k  \leq d_{r} - 1. $$

		\paragraph{}
		
		It is left to prove the case $k=0$, that is, to show that $\tilde{R}_{r} \circ \tilde{\Pi}_{0, r}  = \tilde{\Pi}_{d_{r}-1, r} \circ \tilde{S}_{r+1}$. To this end, one needs to compute the entries $\lc R_{r} \rc_{l , m \cdot d_{r} }$ for $l , m \in \N$. 
		
		\medskip
		
		First, assume that $ l = \lc d_{r} -1 \rc + n \cdot d_{r}$, for some $n \in \Z_{0}$. Then, the entry $ \lc R_{r} \rc_{\lc d_{r} -1 \rc + n \cdot d_{r}, m \cdot d_{r} } $ equals to $ s_{r}\lc \lc d_{r} -1 \rc + n \cdot d_{r}, m \cdot d_{r} \rc \cdot p_{r}^{-1} $, where $ s_{r}\lc \lc d_{r} -1 \rc + n \cdot d_{r}, m \cdot d_{r} \rc \cdot p_{r}^{-1} $ is the transition probability from $ \lc d_{r} -1 \rc + n \cdot d_{r}$ to $ m \cdot d_{r} $ determined by the $\textrm{AMFC}_{\bar{d}_{r}, \bar{p}_{r}}$. 
		Moreover, the integer $\lc d_{r} -1 \rc + n \cdot d_{r}$ transits to $m \cdot d_{r}$ (under the action of $SAMCF_{\bar{d}_{r}, \bar{p}_{r}}$) if and only if the integer $n$ transits $m$ (under the action of $SAMCF_{\bar{d}_{r+1}, \bar{p}_{r+1}}$). The respected transition probabilities satisfy the relation 
		$$ s_{r}\lc \lc d_{r} -1 \rc + n \cdot d_{r}, m \cdot d_{r} \rc \; = \; p_{r} \cdot s_{r+1}\lc n, m \rc .$$
		Thus, one has that
		\begin{equation}\label{Eq_Lem_Auxiliary Pi 1}
			\lc R_{r} \rc_{\lc d_{r} -1 \rc + n \cdot d_{r}, m \cdot d_{r} } \; = \; s_{r+1}\lc n, m \rc, \quad \text{for all } n, m \in \N.
		\end{equation}
		A similar argument shows that
		\begin{equation}\label{Eq_Lem_Auxiliary Pi 2}
			\lc R_{r} \rc_{l, m \cdot d_{r} } \;  =  \; 0 , \quad \text{ for every } l \neq \lc d_{r} -1 \rc + n \cdot d_{r} . 
		\end{equation}
		
		Finally, upon taking into consideration relations \eqref{Eq_Lem_Auxiliary Pi}, \eqref{Eq_Lem_Auxiliary Pi 1} \& \eqref{Eq_Lem_Auxiliary Pi 2}, one infers that
		$$ R_{r} \circ \Pi_{0, r} \; = \; \lc  \lc R_{r} \rc_{l,  m \cdot d_{r}} \rc_{l, m \in \N } \; = \; \Pi_{d_{r-1}, r} \circ S_{r+1} .$$
		This proves Point (1) of the lemma.

		\bigskip 
		
		\paragraph{Proof of Part (2).} It follows straight from Point (1) of the lemma that 
		$$ R_{r}^{i} \circ \Pi_{k,r}= \left\{
		\begin{array}{cl}
			\Pi_{k-i, r},  & \text{if }   i \leq k \\
			\Pi_{d_{r} + k - i, r} \circ S_{r+1}, & \text{if }    i > k \,  
		\end{array}
		\right. ,
		$$
		for all $0 \leq k \leq d_{r}-1 $ and $1 \leq i \leq d_{r}$. In turn, one has that 
		\begin{equation}\label{rrp}
			R_{r}^{d_{r}} \circ \Pi_{k, r} \; = \;  \Pi_{k, r} \circ S_{r+1}, \quad \text{for every }  0 \; \le \; k \; \le \; d_{r} - 1 .
		\end{equation}
		
		It is readily checked that $I = \lc \delta_{l,m} \rc_{l,m \in \N} = \sum_{k=0}^{d_{r} - 1} \Pi_{k,r} \circ F_{k,r}$, where $I $ is the identity map and $\delta_{l,m}$ is the delta of Kronecker. Thus, relation \eqref{rrp} yields 
		$$  R_{r}^{d_{r}}  \; = \; \sum_{k=0}^{d_{r} -1 } \Pi_{k,r} \circ  S_{r + 1} \circ F_{k, r} . $$
		The last relation immediately implies Point (2) of the lemma. 
		
		The proof is complete. 
		
	\end{proof}

	\paragraph{Proof of Lemma \ref{Lem5_Boundary and Non-Normal Points}} 
	
	Fix the sequences $\bar{d} = \lc d_{r} \rc_{r \ge 1}$ and $\bar{p} = \lc p_{r} \rc_{r \ge 1} $ as in the statement. 
	
	\bigskip
	
	\paragraph{Proof of Point (1).}
	Fix $z \in \partial \mc{E}_{\bar{d},\bar{p}}$. By Lemma \ref{Lem_Structure of Fibered Julia Set 1}, for every open neighborhood $ U_{z} \sub \mathbb{C}$ containing $z$, there exist points $u, v \in V$ such that $\lc \tilde{f}_{r} \lc u \rc \rc_{r \geq 1}$ is bounded and $ \underset{r \to +\oo}{\lim} \lav \tilde{f}_{r}\lc v \rc \rav = \infty$. Thus, the family $\lfp \tilde{f}_{r} :  \; r \geq 1 \rfp$ is not normal at $z$. 
	
	In the opposite direction, assume that $z \not\in \partial \mc{E}_{\bar{d},\bar{p}}$. If $z \in \mathbb{C} \backslash  \mc{E}_{\bar{d}, \bar{p}}$, then, since $\mc{E}_{\bar{d}, \bar{p}}$ is compact, there exists a neighborhood $U_{z} \sub \C$ of $z$. Then, by Lemma \ref{Lem_Structure of Fibered Julia Set 1}, $\underset{r \to +\oo}{\lim} \lav \tilde{f}_{r}\lc v \rc \rav = \infty$ for all $v \in U_{z}$. 
	Thus, the family $ \lfp  \tilde{f}_{r} : \; r \geq 1 \rfp$ is normal on $z$. If $z \in  inter\lc E_{\bar{d}, \bar{p}} \rc$, then there exists an open neighborhood $U_{z} \sub \C$ of $z$ such that, for every $v \in U_{z}$, one has that  $ \lav \tilde{f}_{r} \lc v \rc \rav \le 1$ for $r \ge 1$. 
	Therefore, the family $ \lfp  \tilde{f}_{r} : \; r \geq 1 \rfp $  is locally uniformly bounded in $U_{z}$, and hence normal at $z$ by Montel's Theorem \cite[Chapter 3.2, Theorem 3.3, p. 225]{Stein_Shakarchi-Complex_Analysis}.

	\paragraph{Proof of Point (2).} 
	Let $U \sub \C$ be an open set intersecting $\partial \mc{E}_{\bar{d}, \bar{p}}$, and fix $z_{0} \in U \cap \mc{E}_{\bar{d}, \bar{p}}$. By Point (1), the family $\lfp \tilde{f}_{r}: \; r \ge 1 \rfp$ is not normal at $z_{0}$. Then, by Montel's Normality Theorem \cite[Theorem 9.15, p. 340]{Markushevich-Theory_of_Functions_of_Complex_Variable_III}, the union $ \bigcup_{r \ge 1} \tilde{f}_{r}\lc U \rc$ is equal to $\C \backslash X$, where $X$ is either empty or consists of a single point. 
	
	\paragraph{}
	By assumption, the sequence of integers $\bar{d}$ is bounded, therefore, there exists a subsequence of positive integers $ \lc r_{k} \rc_{k=1}^{+\oo}$ such that the degree of all polynomials $f_{r_{k}}$ equals $deg\lc f_{r_{k}} \rc = d$ for $k \ge 1$, where $d \ge 2$ is a positive integer. By Lemma \ref{Lem_Structure of Fibered Julia Set 1} and, in particular, relation \eqref{Eq_Divergent Sequences}, the family $\lfp \tilde{f}_{r} : \; r \ge 1 \rfp$ is normal on a point $z \in \C$ if and only if the family $\lfp \tilde{f}_{r_{k}} : \; k \ge 1 \rfp$ is normal on $z \in \C$. Equivalently, the latter family is normal on $z \in \C$ if and only if the family $ \lfp h_{r_{k}} \circ \tilde{f}_{r_{k} -1} : \; k \ge 1 \rfp$ is normal on $z \in \C$, since $\tilde{f}_{r_{k}} = \lc h_{r_{k}}\circ \tilde{f}_{r_{k}-1} \rc^{d}$,  where the maps $h_{r_{k}}: \C \mapsto \C$ for $k \ge 1$ were defined in \eqref{ps3}.  
	Fix $w \in \C \backslash \lfp 0 \rfp$ and write its polar decomposition as $w =  \rho_{w} e^{2 \pi i \theta_{w}}$, where $\rho_{w} \;> 0$ and $\theta_{w} \in \R$. For each $1 \le j \le d$, define $w_{j} = \rho_{w}^{1 / d} \cdot e^{2\pi i \cdot\lc {\theta_{w} \over d} + {j \over d} \rc }$. Then clearly $w = w_{j}^{d}$ for every $j$, so the values $w_{j}$ are the $d$-th roots of $w$. Since $\tilde{f}_{r_{k}} = \lc h_{r_{k}}\circ \tilde{f}_{r_{k}-1} \rc^{d}$,  one has $w \in \tilde{f}_{r_{k}}\lc U \rc$ as long as one $w_{j} \in h_{r_{k}}\circ \tilde{f}_{r_{k}-1}\lc U \rc$. 
	
	Since $w \in \C \backslash \lfp 0 \rfp$ was arbitrary, we conclude that if the images $\tilde{f}_{r}\lc U \rc$ omit a point, it must be the point $0$. Thus, $\bigcup_{r \ge 1} \tilde{f}_{r}\lc U \rc  \supseteq \C \backslash \lfp 0 \rfp$, completing the proof of Point (2).

	\hfill $\square$

\section{Examples of the set $\mathcal{E}_{\bar{d},\bar{p}}$} \label{sectionexamples}

\begin{example} Firstly, consider the Cantor systems of numeration defined by the sequence $\bar{d} = (d_r)_{r \ge 0}$ where $d_0=1$ and $d_r = 3$, for all $r\geq 1$.  In the Figures \ref{3dj1} and \ref{3dj2} we have some examples of the set $\mathcal{E}_{\bar{d},\bar{p}} $.	
\end{example}

\begin{example} For instance, consider also the Cantor systems of numeration defined by the sequence $\bar{d} = (d_r)_{r \ge 0}$ where $d_0=1$ and $d_r = 2r$, for all $r\geq 1$.  In the Figures \ref{probvari} and \ref{prob} we have some examples of the set $\mathcal{E}_{\bar{d},\bar{p}} $.	
\end{example}

\begin{example} Another interesting example is to consider the Cantor systems of numeration defined by the Fibonacci sequence $\bar{d} = (d_r)_{r \ge 0}$ where $d_0=1$, $d_1=2$ and $d_r = d_{r-1}+d_{r-2}$, for all $r\geq 2$. We can see some possibilities of the set $\mathcal{E}_{\bar{d},\bar{p}} $ in the Figures \ref{probvari2} and \ref{prob2}.
\end{example}

\begin{example} For the Cantor systems of numeration defined by the sequence $\bar{d} = (d_r)_{r \ge 0}$ where $d_0=1$, $d_{2i-1}=3$ and $d_{2i} = 5$, for all $i\geq 1$, we have some possibilities of the set $\mathcal{E}_{\bar{d},\bar{p}} $ in the Figures \ref{probvari3} and \ref{prob3}.
\end{example}

	\begin{figure}[!h]
	\centering
	a)	\includegraphics[scale=0.17]{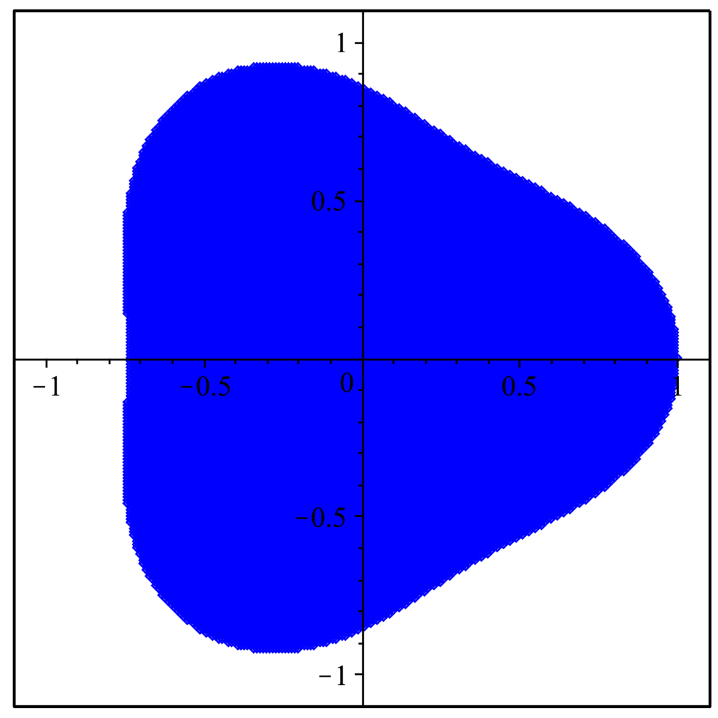}
	b)	\includegraphics[scale=0.17]{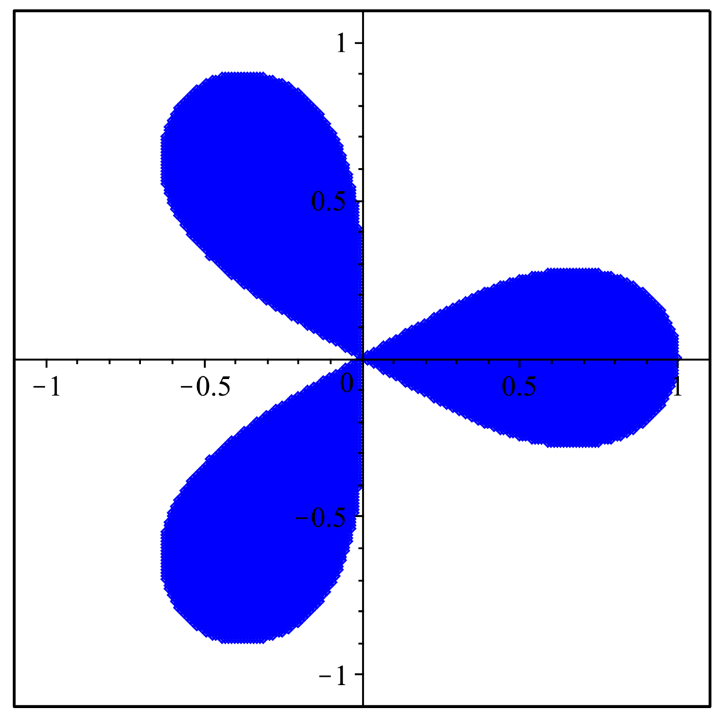}
	c)	\includegraphics[scale=0.17]{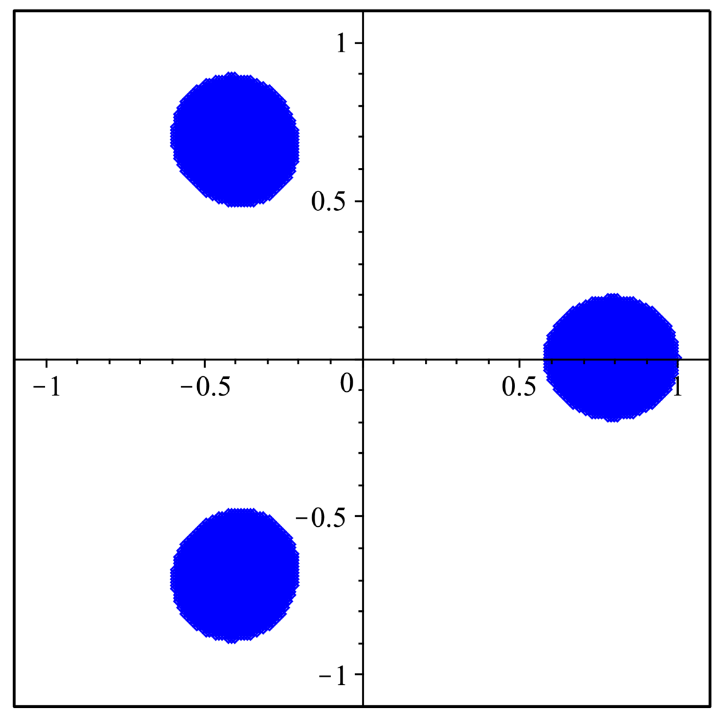}
	\caption{The set $\mathcal{E}_{\bar{d},\bar{p}} $ for the cases a) $p_1=0.7$ and $p_{r}=1$ for $r\geq 2$, b) $p_1=0.5$ and $p_{r}=1$ for $r\geq 2$, c) $p_1=0.4$ and $p_{r}=1$ for $r\geq 2$.}
	\label{3dj1}
\end{figure}
\begin{figure}[!h]
	\centering
	a)	\includegraphics[scale=0.17]{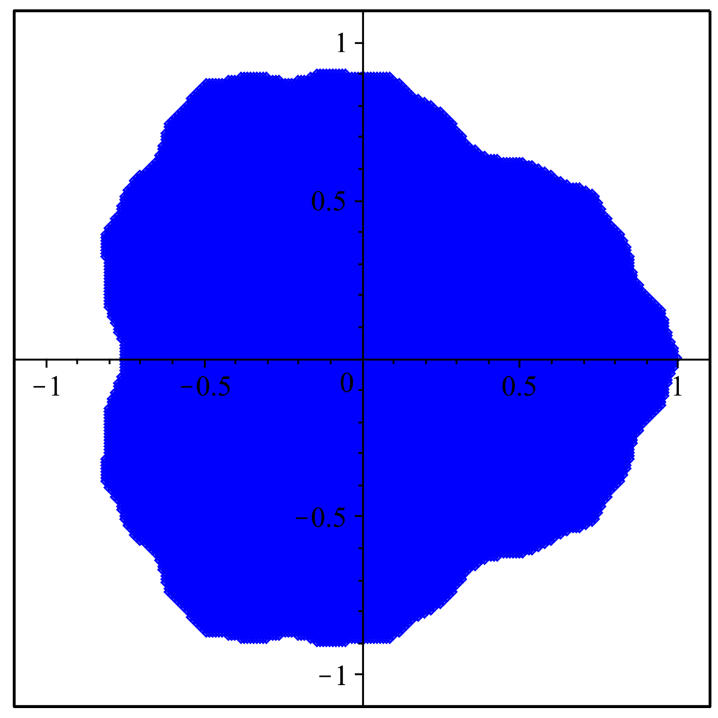}
	b)	\includegraphics[scale=0.17]{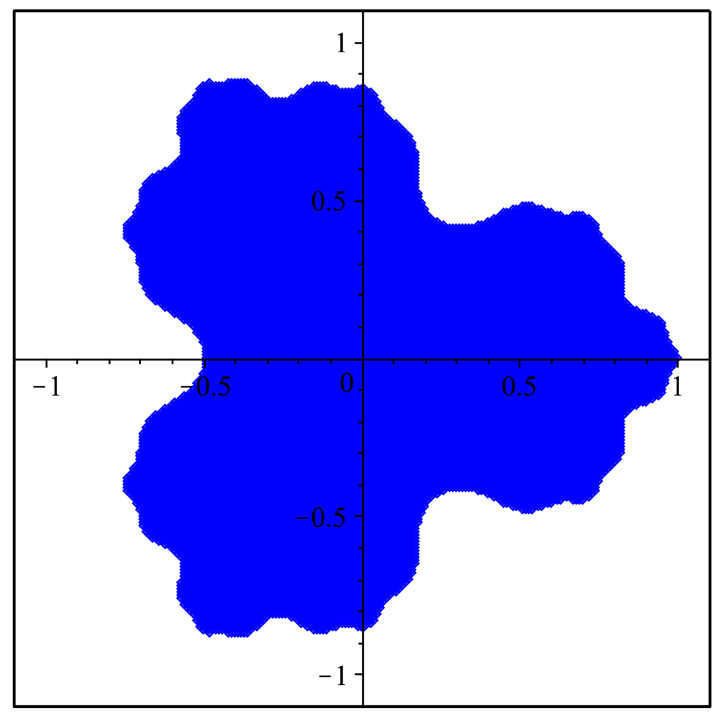}
	c)	\includegraphics[scale=0.17]{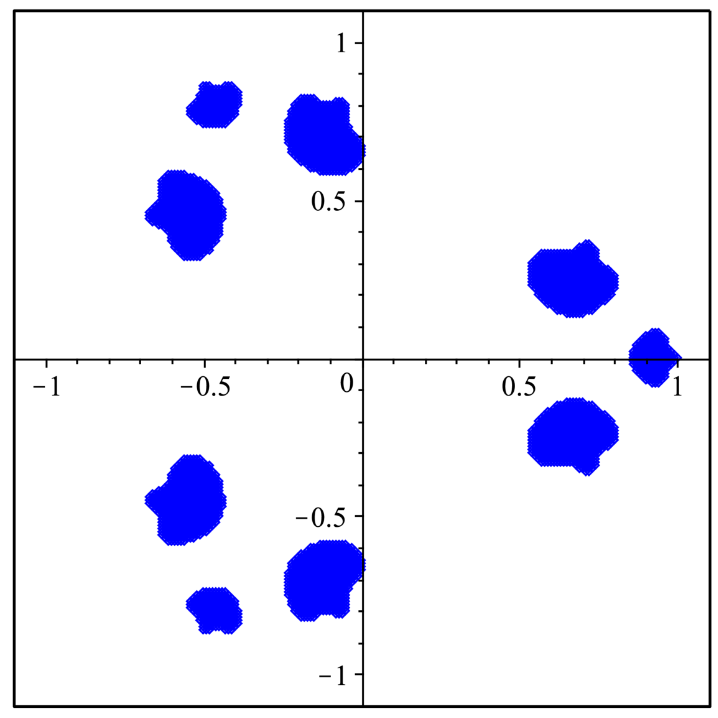}
	\caption{The set $\mathcal{E}_{\bar{d},\bar{p}} $ for the cases a) $p_1=p_2=p_3=0.8$ and $p_{r}=1$ for $r\geq 4$, b) $p_1=p_2=p_3=0.7$ and $p_{r}=1$ for $r\geq 4$, c) $p_1=p_2=p_3=0.6$ and $p_{r}=1$ for $r\geq 4$.}
	\label{3dj2}
\end{figure}

\begin{figure}[!h]
	\centering
	a)	\includegraphics[scale=0.17]{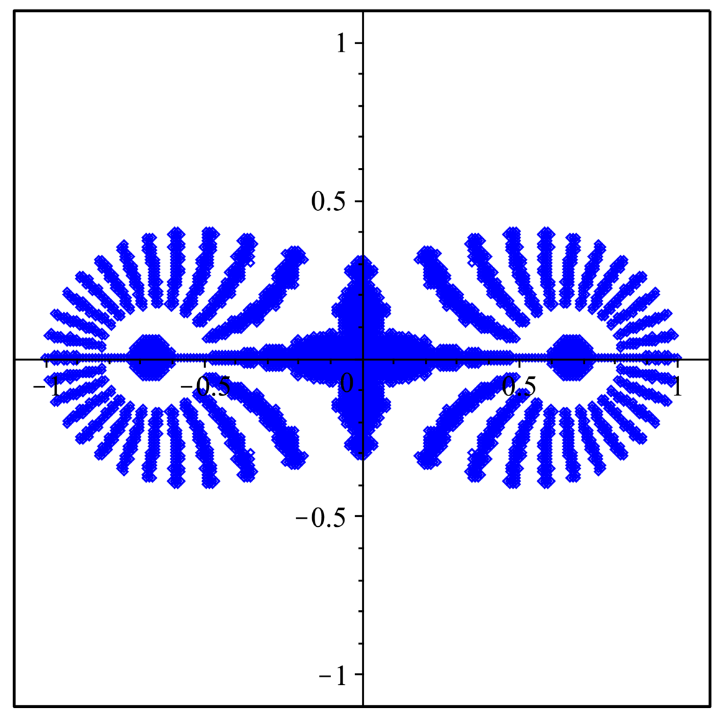}
	b)	\includegraphics[scale=0.17]{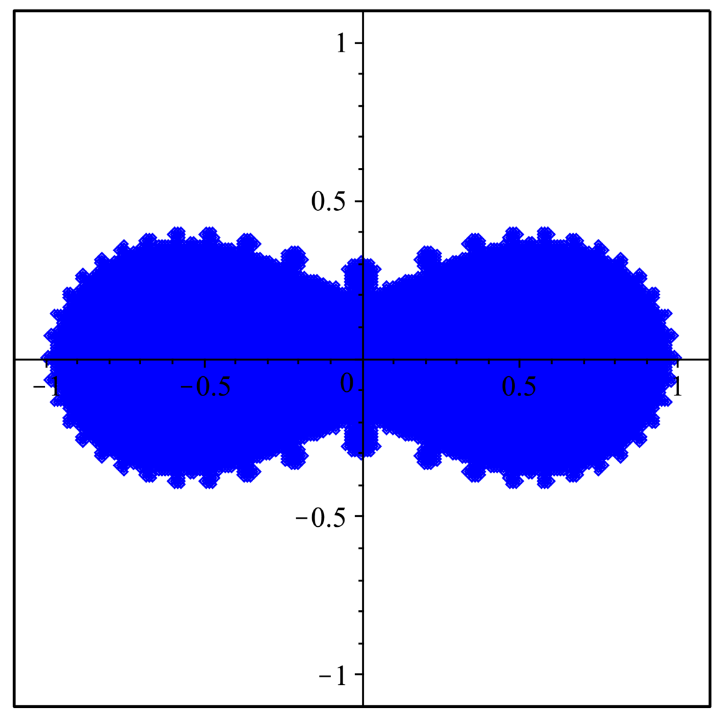}
	c)	\includegraphics[scale=0.17]{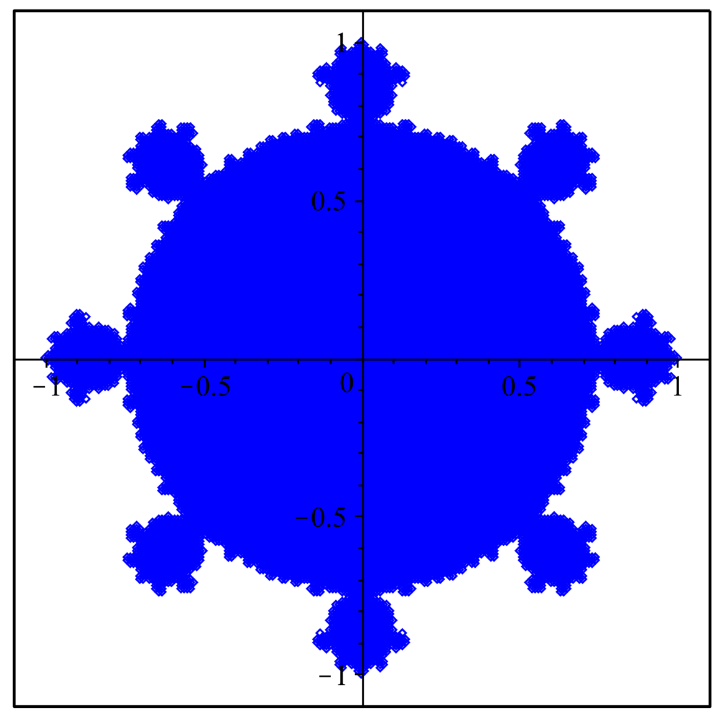}
	\caption{The set $\mathcal{E}_{\bar{d},\bar{p}} $ for the cases a) $p_2=1$, $p_3=0.5$ and $p_{r}=0.55$ for $r\neq 2,3$, b) $p_2=1$ and $p_{r}=0.55$ for $r\neq 2$, c) $p_1=1$ and $p_{r}=0.55$ for $r\neq 1$.}
	\label{probvari}
\end{figure}
\begin{figure}[!h]
	\centering
	a)	\includegraphics[scale=0.17]{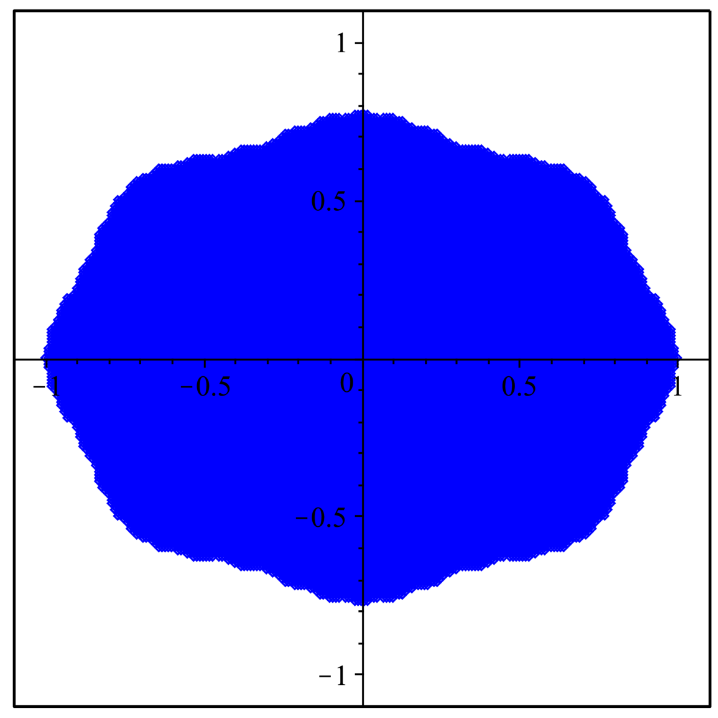}
	b)	\includegraphics[scale=0.17]{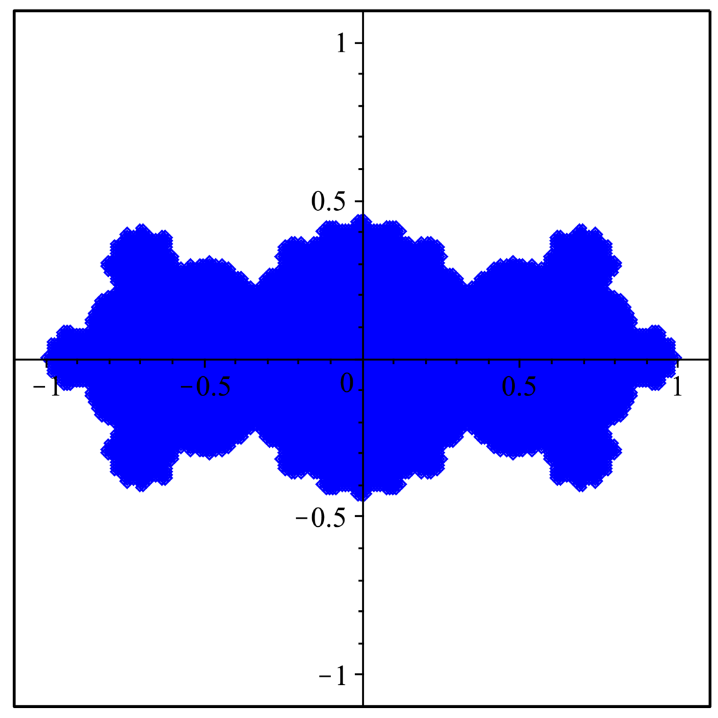}
	c)	\includegraphics[scale=0.17]{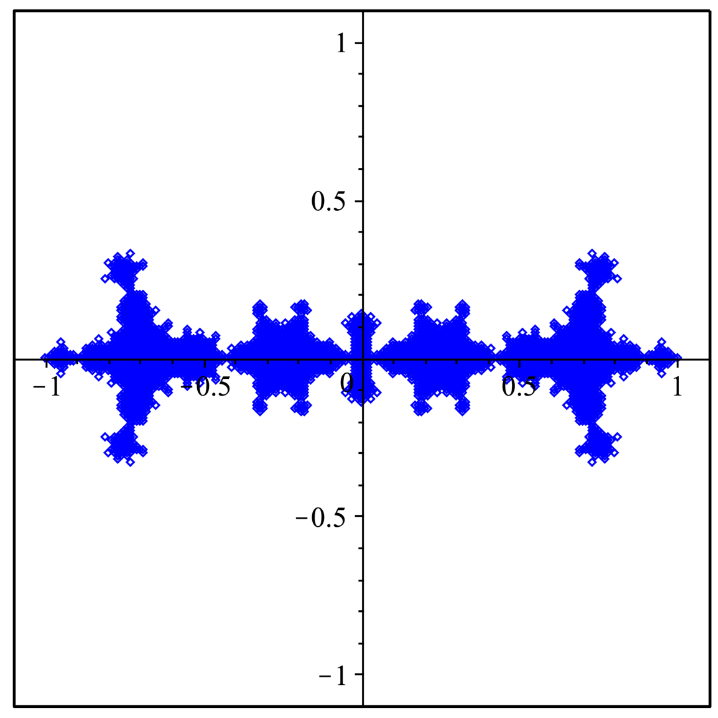}
	\caption{The set $\mathcal{E}_{\bar{d},\bar{p}} $ for the cases a) $p_{r}=0.8$, b) $p_{r}=0.6$, c) $p_{r}=0.52$,  for all $r\geq 1$.}
	\label{prob}
\end{figure}

	\begin{figure}[!h]
	\centering
	a)	\includegraphics[scale=0.17]{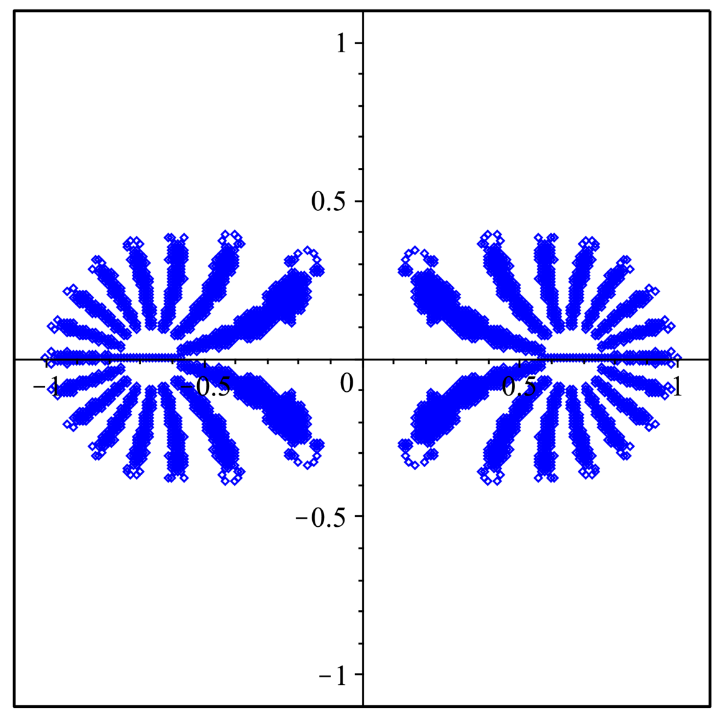}
	b)	\includegraphics[scale=0.17]{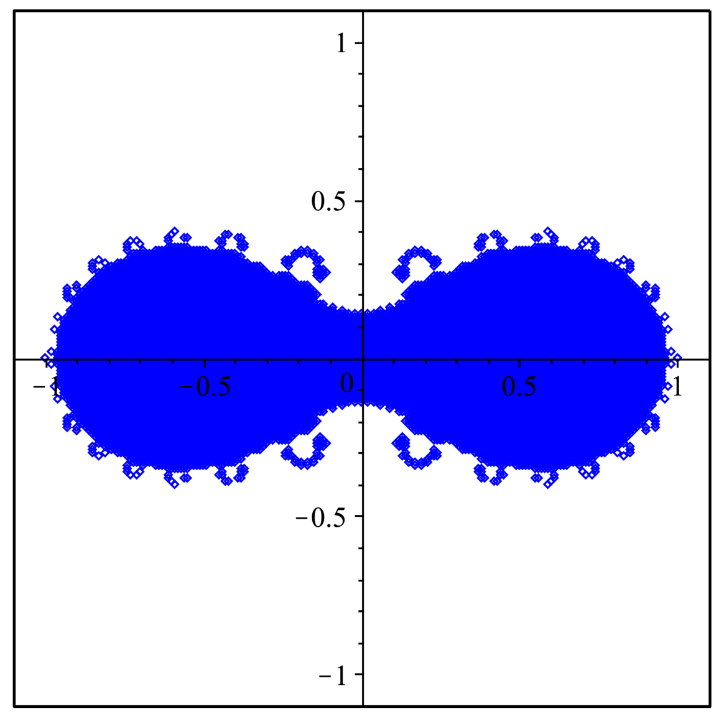}
	c)	\includegraphics[scale=0.17]{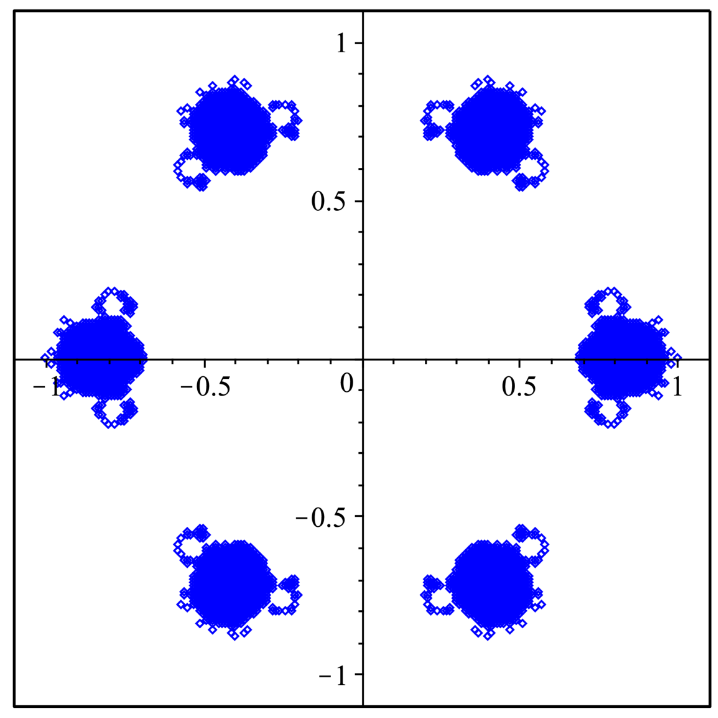}
	\caption{The set $\mathcal{E}_{\bar{d},\bar{p}} $ for the cases a) $p_2=1$, $p_3=0.5$ and $p_{r}=0.55$ for $r\neq 2,3$, b) $p_2=1$ and $p_{r}=0.55$ for $r\neq 2$, c) $p_1=1$ and $p_{r}=0.55$ for $r\geq 2$.}
	\label{probvari2}
\end{figure}
\begin{figure}[!h]
	\centering
	a)	\includegraphics[scale=0.17]{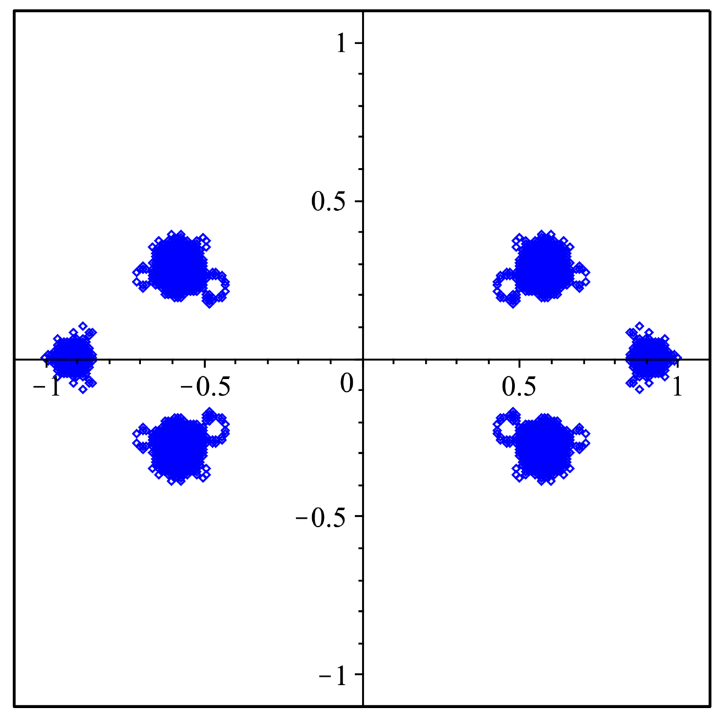}
	b)	\includegraphics[scale=0.17]{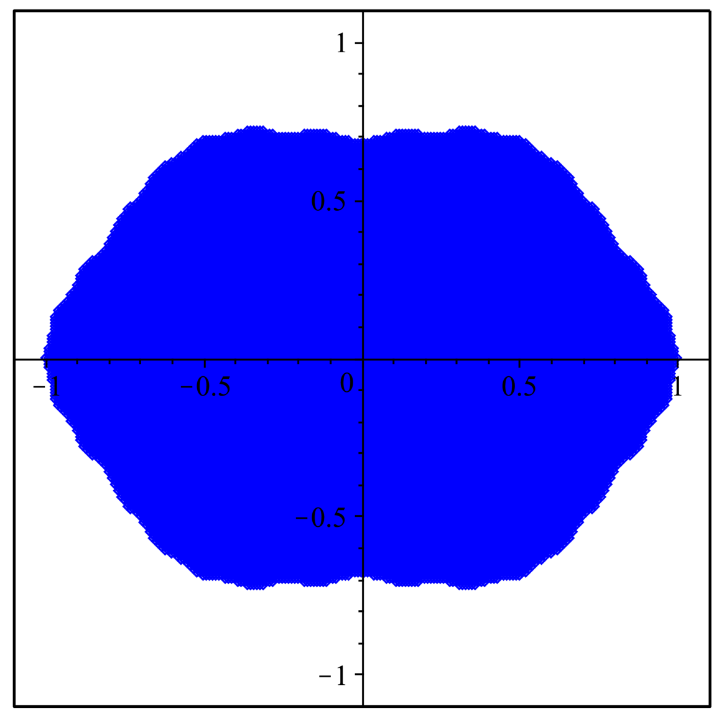}
	c)	\includegraphics[scale=0.17]{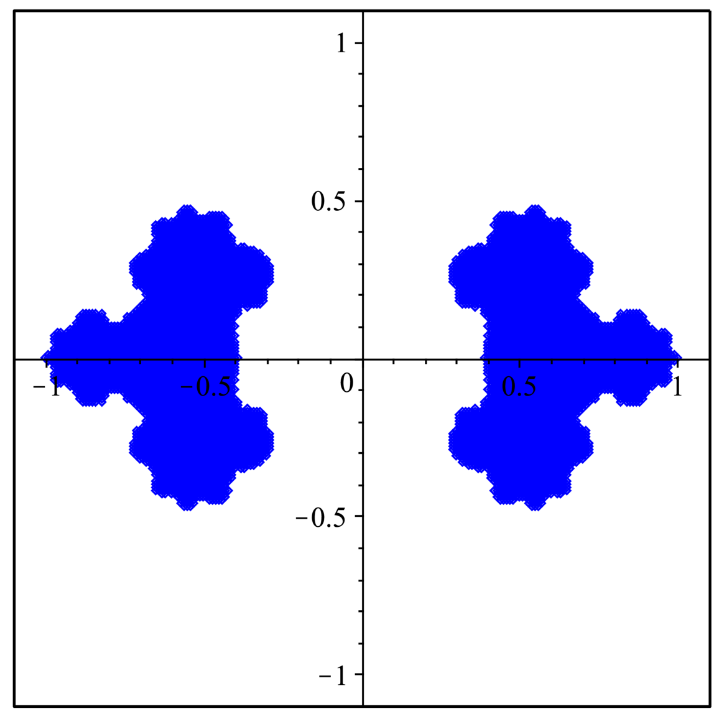}
	\caption{The set $\mathcal{E}_{\bar{d},\bar{p}} $ for the cases a) $p_{r}=0.55$, b) $p_{r}=0.81$, c) $p_{r}=0.61$,  for all $r\geq 1$.}
	\label{prob2}
\end{figure}

	\begin{figure}[!h]
	\centering
	a)	\includegraphics[scale=0.17]{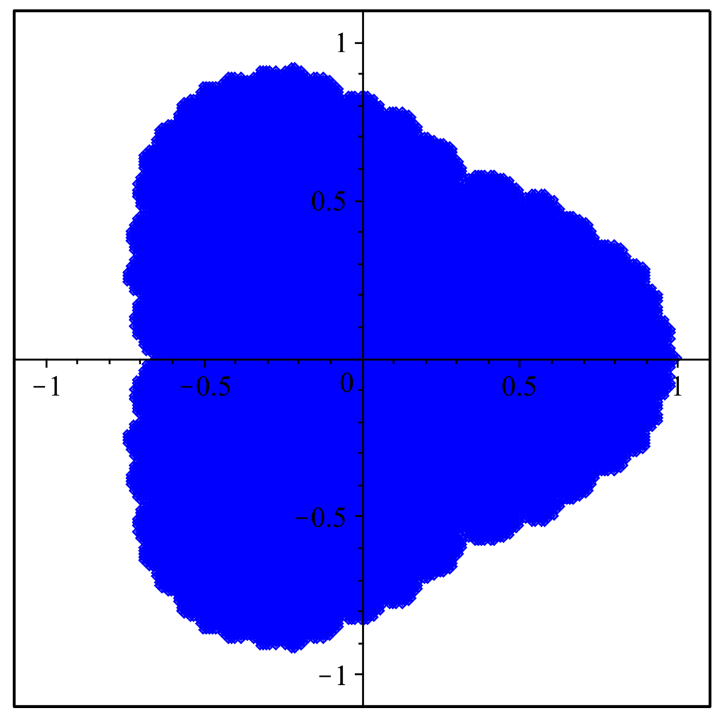}
	b)	\includegraphics[scale=0.17]{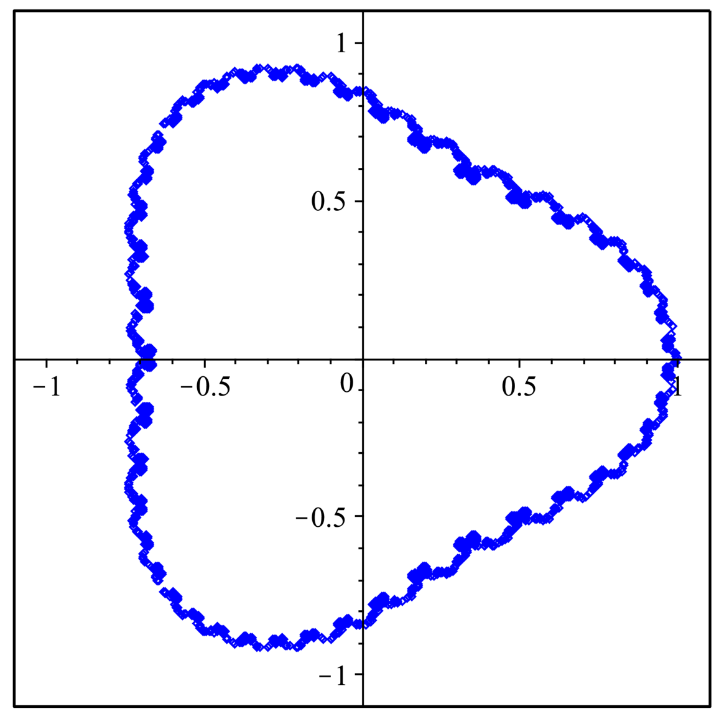}
	c)	\includegraphics[scale=0.17]{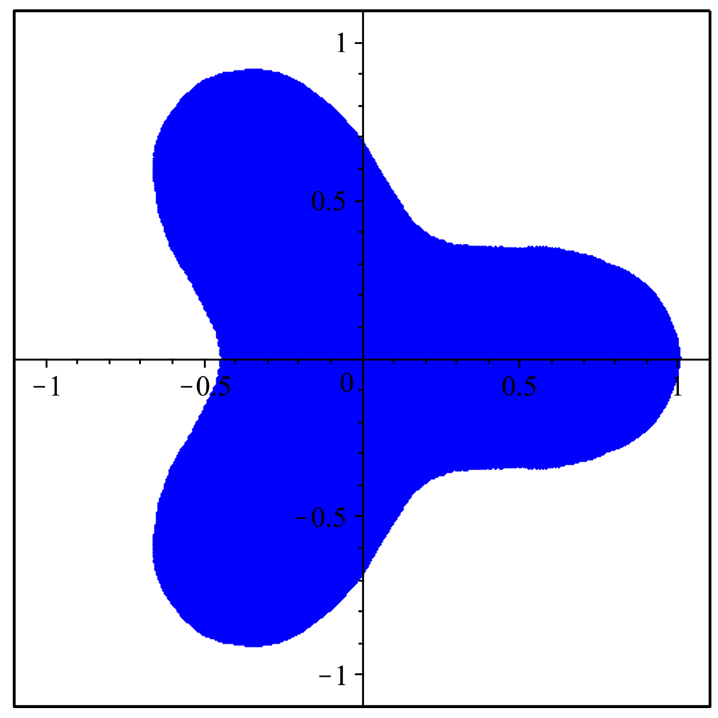}
	\caption{The set $\mathcal{E}_{\bar{d},\bar{p}} $ for the cases a) $p_2=0.9$ and $p_{r}=0.55$ for $r\neq 2$, b) $p_2=1$ and $p_{r}=0.695$ for $r\neq 2$, c) $p_1=0.55$, $p_2=p_3=p_4=0.95$ and $p_{r}=0.55$ for $r\neq 5$.}
	\label{probvari3}
\end{figure}
\begin{figure}[!h]
	\centering
	a)	\includegraphics[scale=0.17]{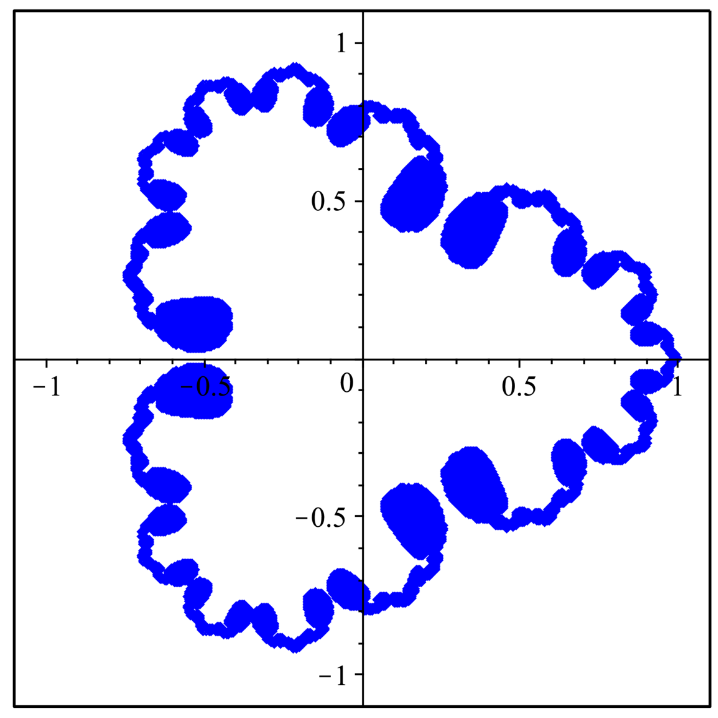}
	b)	\includegraphics[scale=0.17]{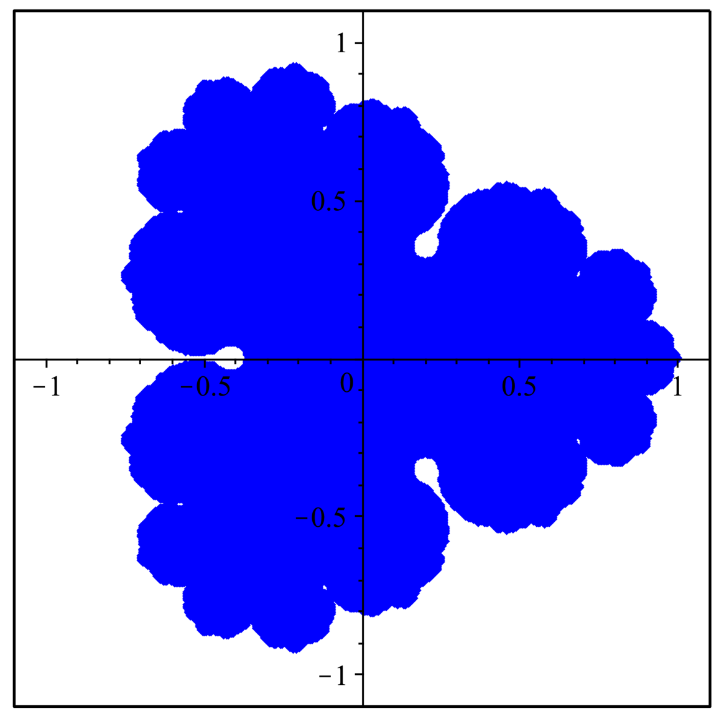}
	c)	\includegraphics[scale=0.17]{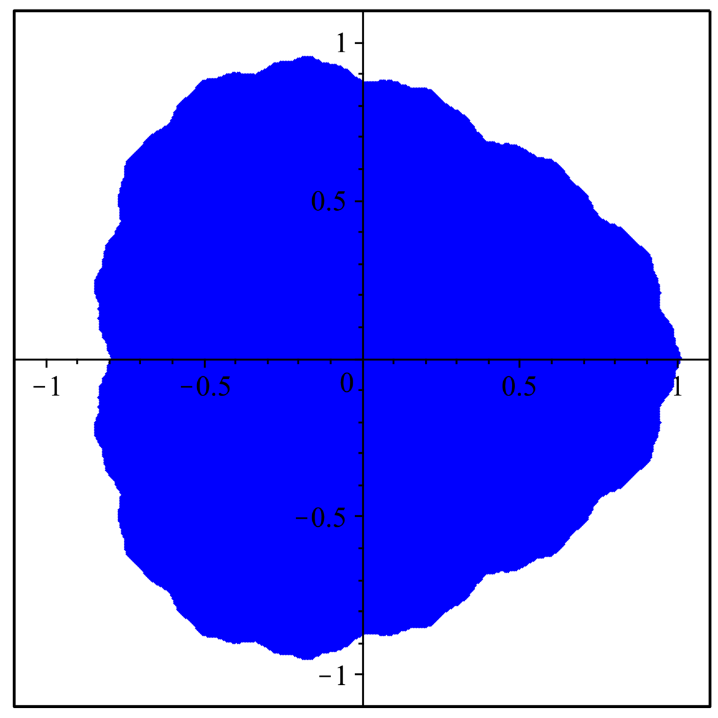}
	\caption{The set $\mathcal{E}_{\bar{d},\bar{p}} $ for the cases a) $p_{r}=0.7$, b) $p_{r}=0.704$, c) $p_{r}=0.8$,  for all $r\geq 1$.}
	\label{prob3}
\end{figure}

\pagebreak
	
	\paragraph{Acknowledgment}
Ali Messaoudi was partially supported by CNPq grant 310784/2021-2 and by Fapesp grant 2019/10269-3. 
		Ioannis Tsokanos was supported by Fapesp grant 2024/10135‑5. Glauco Valle was supported by CNPq grants 307938/2022-0 and 403423/2023-6 and by FAPERJ grant E-26/200.442/2023.

\end{document}